\documentclass[12pt]{article}
\usepackage{algorithm}
\usepackage{algpseudocode}
\usepackage{amsfonts}
\usepackage{amsmath}
\usepackage{amssymb} 
\usepackage{amsthm} 
\usepackage[title]{appendix} 
\usepackage{autobreak}
\usepackage{bigdelim} 
\usepackage{blkarray}
\makeatletter
\def\BA@fnsymbol#1{\ensuremath{%
  \ifcase#1\or *\or \dagger\or \ddagger\or \mathsection\or \mathparagraph\or \|\or **\or \dagger\dagger\or \ddagger\ddagger \else@ctrerr\fi}}%
\makeatother
\usepackage{bm}
\usepackage{booktabs}
\usepackage{cancel} 
\usepackage{cases}
\usepackage{chngcntr}
\usepackage{colortbl} 
\usepackage{comment}
\usepackage{empheq} 
\usepackage{enumerate}
\usepackage{fancyhdr} 
\usepackage{fancyvrb} 
\usepackage{framed} 
\usepackage{graphicx}
\usepackage{hyperref} 
\hypersetup{
  colorlinks,
  linkcolor={red!50!black},
  citecolor={blue!50!black},
  urlcolor={blue!80!black}
}
\usepackage{mathrsfs} 
\usepackage{multirow} 
\usepackage[sort&compress,numbers]{natbib}
\usepackage{setspace} 
\usepackage{stmaryrd}
\usepackage{subfig} 
\usepackage{pdflscape} 
\usepackage{pgfplots} 
\usepackage{pxrubrica} 
\usepackage{url}
\usepackage[colorinlistoftodos,prependcaption,textsize=footnotesize]{todonotes}

\usepackage[top=35truemm,bottom=35truemm,left=30truemm,right=30truemm]{geometry} 
\makeatletter

\@addtoreset{equation}{section}
\makeatother

\algtext*{EndWhile}
\algtext*{EndIf}
\algtext*{EndFor}

\theoremstyle{plain}
\newtheorem{theorem}{Theorem}[section]

\newtheorem{lemma}[theorem]{Lemma}

\theoremstyle{definition}

\theoremstyle{remark}
\newtheorem{remark}[theorem]{Remark}%
\newtheorem{question}[theorem]{Question}%

\DeclareMathOperator*{\setspan}{span}

\DeclareMathOperator*{\tr}{tr}

\newcommand{\relmiddle}[1]{\mathrel{}\middle#1\mathrel{}}

\newcommand{\norm}[1]{\lVert #1 \rVert} 

\newcommand{\bbE}{\mathbb{E}}

\newcommand{\bbK}{\mathbb{K}}

\newcommand{\bbR}{\mathbb{R}}
\newcommand{\bbS}{\mathbb{S}}

\newcommand{\bbU}{\mathbb{U}}
\newcommand{\bbV}{\mathbb{V}}

\newcommand{\calI}{\mathcal{I}}
\newcommand{\calJ}{\mathcal{J}}

\newcommand{\calS}{\mathcal{S}}

\newcommand{\calV}{\mathcal{V}}

\newcommand{\CP}{\mathcal{CP}}
\newcommand{\COP}{\mathcal{COP}}

\makeatletter
\newcommand{\dotpplus}{\mathbin{\text{\@dotpplus}}}
\newcommand{\@dotpplus}{%
  \ooalign{\hidewidth\hbox{$\circ$}\hidewidth\cr$\m@th+$\cr}%
}
\makeatother
\newcommand{\plmod}{\ensuremath{\dotpplus}}
\makeatletter
\newcommand{\dotminus}{\mathbin{\text{\@dotminus}}}
\newcommand{\@dotminus}{%
  \ooalign{\hidewidth\hbox{$\circ$}\hidewidth\cr$\m@th-$\cr}%
}
\makeatother
\newcommand{\mnmod}{\ensuremath{\dotminus}}

\newcommand{\RNum}[1]{\uppercase\expandafter{\romannumeral #1\relax}} 
\newcommand{\Rnum}[1]{\lowercase\expandafter{\romannumeral #1\relax}} 

\title{Copositive and completely positive cones over symmetric cones of rank at least 5}

\makeatletter
\let\@fnsymbol\@arabic
\makeatother

\author{
\normalsize
    Mitsuhiro Nishijima\thanks{Department of Industrial and Systems Engineering, Keio University, 3-14-1 Hiyoshi, Kohoku-ku, Yokohama-shi, 2238522, Kanagawa, Japan. ({\tt nishijima@keio.jp}).}
}

\begin{document}
\maketitle

\begin{abstract}\noindent
We study copositive and completely positive cones over symmetric cones of rank at least $5$, with particular emphasis on whether these cones are spectrahedral shadows and on the behavior of a sum-of-squares inner-approximation hierarchy.
We examine to what extent known results for nonnegative orthants of dimension at least $5$ carry over to general symmetric cones of rank at least $5$.
We first prove that neither the copositive nor the completely positive cone over such a symmetric cone is a spectrahedral shadow.
We then generalize the Horn matrix to this setting by introducing Horn transformations and analyzing their geometric and algebraic properties.
We show that Horn transformations generate exposed rays of copositive cones over symmetric cones and that they evade the zeroth level of the sum-of-squares inner-approximation hierarchy.
Finally, we examine the asymptotic exactness of this hierarchy over positive semidefinite cones.
In contrast to the $5$-dimensional nonnegative orthant, where the hierarchy is known to recover the entire copositive cone in the limit, we construct instances over positive semidefinite cones of order at least $5$ certifying that the union of all levels remains strictly included in the copositive cone.
\end{abstract}
\vspace{0.5cm}

\noindent
{\bf Key words. }Copositive cones, Completely positive cones, Symmetric cones, Spectrahedral shadows, Facial structure
%

\section{Introduction}\label{sec:intro}
Let $\bbK$ be a closed cone in a finite-dimensional real vector space $\bbV$ with inner product $\bullet$.
A self-adjoint linear transformation $A$ on $\bbV$ is said to be \emph{copositive over $\bbK$} if $x\bullet A(x) \ge 0$ for all $x \in \bbK$, and \emph{completely positive over $\bbK$} if there exist a positive integer $m$ and vectors $a_1,\dots,a_m \in \bbK$ such that $A = \sum_{i=1}^m a_i\otimes a_i$.
We refer to the cones of self-adjoint linear transformations that are copositive over $\bbK$ and completely positive over $\bbK$ as the \emph{copositive cone over $\bbK$} and the \emph{completely positive cone over $\bbK$}, respectively.

In this paper, we focus on the case where $\bbK$ is a \emph{symmetric cone}, that is, a self-dual and homogeneous cone.
Typical examples of symmetric cones include nonnegative orthants, second-order cones, the cone of real symmetric positive semidefinite matrices, and their direct products.
If $\bbK$ is a nonnegative orthant, copositivity and complete positivity reduce to the standard notions~\cite{SB2021}.
Using copositive and completely positive cones over symmetric cones, many NP-hard optimization problems can be reformulated in a unified way as conic linear programs~\cite{Burer2009,Burer2012,BMP2016,BD2012,BDd+2000}.

We study whether copositive and completely positive cones over symmetric cones are \emph{spectrahedral shadows}.
Roughly speaking, a spectrahedral shadow is the feasible set of a semidefinite programming problem, for which polynomial-time algorithms are available~\cite{WSV2000}.
Formally, as recalled in Section~\ref{subsec:spec}, it is defined as a projection of a \emph{spectrahedron}.

There are existing theoretical results on whether copositive and completely positive cones over symmetric cones are spectrahedral shadows.
First, Bodirsky, Kummer, and Thom~\cite[Corollary~3.18]{BKT2026} showed that a copositive cone over a nonnegative orthant of dimension $n\ge 5$ is not a spectrahedral shadow.
This result implies that its dual, a completely positive cone over a nonnegative orthant of dimension $n \ge 5$, is not a spectrahedral shadow either.
Conversely, it follows from \cite[Theorem~1]{Diananda1962} and \cite{MM1962} that copositive and completely positive cones over a nonnegative orthant of dimension $n\le 4$ are spectrahedral shadows.
Second, by the S-lemma~\cite[Theorem~2.2]{PT2007}, copositive and completely positive cones over a second-order cone are spectrahedral shadows for any dimension.
For further details on spectrahedral shadow representations for these cones, see Theorem~1 and Corollary~5 of \cite{SZ2003}.
Third, the result shown by Nishijima and Louren\c{c}o~\cite{NL2025} implies that copositive cones over symmetric cones of dimension at least $2$ are not spectrahedra.
However, such copositive cones may be spectrahedral shadows, as in the examples of a nonnegative orthant of dimension at most $4$ and of a second-order cone.

Numerically, copositive cones over symmetric cones can be well approximated by spectrahedral shadows.
Nishijima and Nakata~\cite{NN2024_Approximation} proposed a hierarchical inner approximation of copositive cones over symmetric cones, where each level of the hierarchy is a spectrahedral shadow.
When the symmetric cone is a nonnegative orthant, the hierarchy reduces to that proposed by Parrilo~\cite{Parrilo2000_Semidefinite}.
In the experiments reported in \cite{NN2024_Approximation,NN2024_Generalizations}, the optimal values of various problems with copositive constraints were numerically identical to the approximate optimal values obtained using even the zeroth level of the hierarchy.

For specific symmetric cones, the agreement or lack thereof between the
copositive cone over the symmetric cone and the zeroth level of this inner-approximation hierarchy has been clarified.
On the one hand, it was shown in \cite[Proposition~4.12]{NN2024_Generalizations} that a copositive cone over a second-order cone always coincides with the zeroth level of the hierarchy.
On the other hand, for a nonnegative orthant, the copositive cone coincides with the zeroth level of the hierarchy if and only if the dimension of the nonnegative orthant is at most $4$, which follows from \cite[Section~4]{Parrilo2000_Semidefinite} and \cite{Diananda1962}.
In particular, in dimensions at most $4$, it also coincides with the union of all levels of the hierarchy.
When the dimension of the nonnegative orthant is at least $5$, the Horn matrix
\begin{equation}
\bm{H} \coloneqq \begin{pmatrix}
1 & -1 & 1 & 1 & -1\\
-1&  1 &-1 & 1 &  1\\
 1& -1 & 1 &-1 &  1\\
 1&  1 &-1 & 1 & -1\\
-1&  1 & 1 &-1 &  1
\end{pmatrix} \label{eq:Horn_mat}
\end{equation}
or its embedding, i.e., the symmetric matrix obtained by padding $\bm{H}$ with
zeros, provides a certificate that the copositive cone over the nonnegative
orthant does not coincide with the zeroth level of the hierarchy
\cite[Section~5]{Parrilo2000_Semidefinite}.

For nonnegative orthants, the asymptotic exactness of the inner-approximation hierarchy is completely understood in terms of the dimension.
The union of all levels coincides with the copositive cone over the nonnegative orthant when the dimension is at most $5$, by the preceding zeroth-level result for dimensions at most $4$ and \cite[Theorem~3]{SV2024} for dimension $5$.
In contrast, when the dimension is at least $6$, the union is strictly included in the copositive cone~\cite[page~1024]{LV2023}.

The contributions of this paper are threefold.
First, we extend the result of \cite{BKT2026}, which was established for nonnegative orthants, to general symmetric cones.
Specifically, we prove that for any symmetric cone of \emph{rank} at least $5$, the associated copositive and completely positive cones are not spectrahedral shadows; see Theorem~\ref{thm:COP_not_shadow}.
Here, the rank of a symmetric cone plays the same role as the dimension of a nonnegative orthant, although in general it does not coincide with the dimension of the symmetric cone.
As discussed in Section~\ref{sec:remark}, when the rank of a symmetric cone is $1$ or $2$, the associated copositive and completely positive cones are spectrahedral shadows.
Interestingly, the case where the rank is $3$ or $4$ is still open, and we do not know whether the associated copositive and completely positive cones are spectrahedral shadows.

Second, we generalize the Horn matrix defined in \eqref{eq:Horn_mat} to the setting of symmetric cones of rank at least $5$ by introducing \emph{Horn transformations}.
We show in Theorem~\ref{thm:Horn_exposed_symcone} that the Horn transformations generate exposed rays of copositive cones over symmetric cones.
This parallels the fact, supported by \cite[page~335]{HN1963} and \cite[Theorem~4.6.\Rnum{3}]{Dickinson2011}, that the Horn matrix generates an exposed ray of the copositive cone over the $5$-dimensional nonnegative orthant.

Third, we demonstrate the non-exactness of the inner-approximation hierarchy proposed by Nishijima and Nakata~\cite{NN2024_Approximation} when the rank of the symmetric cone is at least $5$.
We show in Theorem~\ref{thm:not_sos} that the Horn transformations do not belong to the zeroth level of the inner-approximation hierarchy.
As the inner-approximation hierarchy uses cones that are spectrahedral shadows, a copositive cone and its inner approximation cannot coincide in view of Theorem~\ref{thm:COP_not_shadow}.
The Horn transformations provide concrete examples of elements that belong to the copositive cone but not to the zeroth level of the corresponding inner-approximation hierarchy.
Moreover, for positive semidefinite cones of order at least $5$, we show in Theorem~\ref{thm:not_exact_psd} that the associated copositive cone strictly includes the union of all levels of this hierarchy.
This contrasts with the case of the nonnegative orthant of dimension $5$, where the associated copositive cone coincides with the union of all levels of the hierarchy.

We emphasize the significance of our result that Horn transformations generate exposed rays of copositive cones over symmetric cones.
For a convex cone, exposed rays are precisely $1$-dimensional exposed faces.
Faces and their exposedness play a central role in deriving error bounds for associated conic linear feasibility problems~\cite{Lourenco2021,LLP2023,LLL+2024,LLL+2025,LLP2025,WLP20XX}.
Recently, the faces and their exposedness for copositive and completely positive cones over symmetric cones have been studied in \cite{NL2025,NL2026}.
From this perspective, the present work can be positioned within this line of research.
Furthermore, for the nonnegative orthant of dimension $5$, the Horn matrix is a key ingredient in the classification of the extreme rays of the associated copositive cone~\cite{Hildebrand2012}.
Although the classification of the extreme rays of copositive cones over symmetric cones of rank $5$ remains an open problem, the results of this paper may contribute to the development of analogous classifications in this broader setting.

This paper is organized as follows.
In Section~\ref{sec:preliminaries}, we present the notation and preliminary technical results required later.
In Section~\ref{sec:non_spectrahedral_shadow}, we show that copositive and completely positive cones over symmetric cones of rank at least $5$ are not spectrahedral shadows.
In Section~\ref{sec:Horn}, we introduce Horn transformations and show that they generate exposed rays of copositive cones over symmetric cones.
In Section~\ref{sec:non_exact}, we demonstrate the non-exactness of the inner-approximation hierarchy over both general and particular symmetric cones.
In Section~\ref{sec:remark}, we conclude with final remarks.

\section{Preliminaries}\label{sec:preliminaries}
\subsection{Basic Notation and Terminology}
Let $V$ denote a finite-dimensional real vector space endowed with an inner product.
For a subset $S$ of $V$, we write $S^\perp$ and $\setspan S$ for the set of elements orthogonal to every element in $S$ and the minimal subspace containing $S$, respectively.
We use $\bbR$ to denote the field of real numbers.
For $x\in V$, we define $\bbR x \coloneqq \{ax \mid a\in \bbR\}$ and $\bbR_+ x \coloneqq \{ax \mid a\ge 0\}$.

We call a subset $K$ of $V$ a \emph{cone} if $ax \in K$ for all $a \ge 0$ and $x\in K$.
A linear transformation $f$ on $V$ is called an \emph{automorphism} of a closed convex cone $K$ if $f$ is invertible and satisfies $f(K) = K$.
We say that closed convex cones $K_1$ and $K_2$ are \emph{linearly isomorphic} if one can find a linear isomorphism $f\colon \setspan K_1 \to \setspan K_2$ satisfying $f(K_1) = K_2$.
The \emph{dual cone} of a subset $S$ of $V$ is the cone of $x\in V$ such that the inner product between $x$ and $y$ is nonnegative for all $y\in S$.
For a closed convex cone $K$, a nonzero element $x\in K$ is said to generate an \emph{extreme ray} $\bbR_+x$ of $K$ if, whenever $a,b\in K$ satisfy $a+b\in \bbR_+x$, it follows that $a,b\in \bbR_+x$.
Moreover, a nonzero element $x\in K$ is said to generate an \emph{exposed ray} $\bbR_+x$ of $K$ if there exists an element $d$ in the dual cone of $K$ such that $\bbR_+x = K \cap \{d\}^\perp$.
Note that every exposed ray is an extreme ray.

Vectors are written in bold lowercase letters, e.g., $\bm{a}$.
The $i$th element of a vector $\bm{a}$ is written as $a_i$.
The vector whose $i$th element is $1$ and whose remaining elements are $0$ is denoted by $\bm{e}_i$.
For vectors $\bm{a}$ and $\bm{b}$, we write $\bm{a}\odot \bm{b}$ for the entrywise product of $\bm{a}$ and $\bm{b}$.
We use $\bbR^n$ to denote the space of $n$-tuples of real numbers.
We endow $\bbR^n$ with the standard inner product.
$\bbR_+^n$, an $n$-dimensional nonnegative orthant, is defined as the set of nonnegative vectors in $\bbR^n$.

Matrices are written in bold uppercase letters, e.g., $\bm{A}$.
The $(i,j)$th element of a matrix $\bm{A}$ is written as $A_{ij}$.
For a square matrix $\bm{A}$, we use $\tr(\bm{A})$ to denote the trace of $\bm{A}$.
We use $\bbS^n$ to denote the space of real $n\times n$ symmetric matrices.
Furthermore, we write $\bbS_+^n$ for the cone of positive semidefinite matrices in $\bbS^n$, which we call the positive semidefinite cone.

\subsection{Symmetric Cones and Euclidean Jordan Algebras}\label{subsec:EJA}
This subsection provides preliminaries on symmetric cones and Euclidean Jordan algebras.
As will be seen later, there is a one-to-one correspondence between these two objects, and we frequently treat symmetric cones through the framework of Euclidean Jordan algebras.
Hence we first introduce Euclidean Jordan algebras and then turn to symmetric cones.

Let $\bbE$ be a finite-dimensional real vector space equipped with a bilinear product $\circ\colon \bbE\times \bbE\to \bbE$.
We write $x\circ x$ as $x^2$ for every $x\in \bbE$.
We call $(\bbE,\circ)$ a \emph{Jordan algebra} if the product $\circ$ is commutative and satisfies $x\circ (x^2\circ y) = x^2 \circ (x\circ y)$ for all $x,y\in \bbE$.
In this paper, we assume that every Jordan algebra has a unit element.
In addition, a Jordan algebra $(\bbE,\circ)$ is called \emph{Euclidean} if we can equip $\bbE$ with an \emph{associative} inner product $\bullet\colon \bbE\times \bbE \to \bbR$, where the associativity means that $(x\circ y)\bullet z = x\bullet (y\circ z)$ for all $x,y,z\in \bbE$.
In this paper, when $(\bbE,\circ)$ is a Euclidean Jordan algebra, we fix an associative inner product and write the algebra as $(\bbE,\circ,\bullet)$.
If no confusion can arise, we abbreviate $(\bbE,\circ,\bullet)$ to $\bbE$.
The norm induced by this inner product is denoted by $\norm{\cdot}$.

Let $(\bbE,\circ,\bullet)$ be a Euclidean Jordan algebra.
An element $c\in \bbE$ satisfying $c^2 = c$ is called an \emph{idempotent}.
An idempotent $c \in \bbE$ is referred to as \emph{primitive} if it is nonzero and cannot be decomposed into a sum of two nonzero idempotents in $\bbE$.
Idempotents $c_1,\dots,c_k \in \bbE$ are said to be \emph{orthogonal} if $i\neq j$ implies $c_i \circ c_j = 0$.
A \emph{Jordan frame} of $\bbE$ is then a set $\{c_1,\dots,c_r\}$ of orthogonal primitive idempotents in $\bbE$ such that $\sum_{i=1}^r c_i$ is the unit element of $\bbE$.
The number $r$ of elements in a Jordan frame of $\bbE$ is independent of the choice of the frame~\cite[Section~\RNum{3}.1]{FK1994}, and we call it the \emph{rank} of the Euclidean Jordan algebra $\bbE$.

For an idempotent $c\in \bbE$ and $\lambda \in \bbR$, we define $\bbE(c,\lambda) \coloneqq \{x\in \bbE \mid c\circ x = \lambda x\}$.
We note that $\bbE(c,1)$ is a Euclidean Jordan subalgebra of $\bbE$~\cite[Proposition~\RNum{4}.1.1]{FK1994}.
In addition, for a Jordan frame $\{c_1,\dots,c_r\}$ of $\bbE$, we introduce
\begin{alignat*}{3}
\bbE_{ii} &\coloneqq \bbE(c_i,1) = \bbR c_i &\ &(i = 1,\dots,r),\\
\bbE_{ij} &\coloneqq \textstyle \bbE(c_i,\frac{1}{2}) \cap \bbE(c_j,\frac{1}{2}) && (i,j = 1,\dots,r,\ i\neq j).
\end{alignat*}
These subspaces yield the following orthogonal direct sum decomposition~\cite[Theorem~\RNum{4}.2.1.\Rnum{3}]{FK1994}:
\begin{equation}
\bbE = \bigoplus_{1\le i\le j\le r}\bbE_{ij}. \label{eq:Peirce}
\end{equation}
This decomposition is called the \emph{Peirce decomposition} of $\bbE$ with respect to the Jordan frame.
It follows from \cite[Lemma~20]{GST2004} that
\begin{equation}
\bbE\left(\sum_{l=1}^k c_l,1\right) = \bigoplus_{1\le i\le j\le k}\bbE_{ij}. \label{eq:Peirce_truncated}
\end{equation}
Furthermore, for every $1\le i < j \le r$ and $x \in \bbE_{ij}$, the following two equalities hold:
\begin{align}
x^2 &= \underbrace{c_i\circ x^2}_{\in \bbE_{ii}} + \underbrace{c_j \circ x^2}_{\in \bbE_{jj}}, \label{eq:x^2}\\
c_i\bullet (c_i \circ x^2) &= \frac{1}{2}\norm{x}^2, \label{eq:ci_ci_xij2}
\end{align}
where \eqref{eq:x^2} follows from \cite[Proposition~\RNum{4}.1.1]{FK1994} and its proof, and \eqref{eq:ci_ci_xij2} follows from the associativity of the inner product $\bullet$ and the definition of $\bbE_{ij}$.

A closed cone $\bbK$ in a finite-dimensional real inner product space is said to be a \emph{symmetric cone} if it is self-dual and homogeneous.
The self-duality means that the dual cone of $\bbK$ is equal to $\bbK$, and the homogeneity means that for any interior points $x,y$ of $\bbK$, one can choose an automorphism $G$ of $\bbK$ with $G(x) = y$.

There is a one-to-one correspondence between symmetric cones and Euclidean Jordan algebras.
For every Euclidean Jordan algebra $\bbE$, the set $\bbE_+ \coloneqq \{x^2 \mid x\in \bbE\}$ forms a symmetric cone~\cite[Theorem~\RNum{3}.2.1]{FK1994}.
Conversely, for every symmetric cone $\bbK$ in a finite-dimensional real inner product space $(\bbE,\bullet)$, one can define a bilinear product $\circ\colon \bbE\times\bbE \to \bbE$ such that $(\bbE,\circ,\bullet)$ is a Euclidean Jordan algebra and satisfies $\bbE_+ = \bbK$~\cite[Theorem~\RNum{3}.3.1]{FK1994}.
Moreover, such a Euclidean Jordan algebra $\bbE$ is unique up to isomorphism in a Jordan algebraic sense.\footnote{The uniqueness follows from the classification of simple Euclidean Jordan algebras~\cite[Chapter~\RNum{5}]{FK1994}.
See also \cite[Section~2.1]{Chua2009} for further discussion from the viewpoints of homogeneous cones and $T$-algebras.}
We define the rank of the symmetric cone $\bbK$ as that of the associated Euclidean Jordan algebra $\bbE$.

There is a one-to-one correspondence between extreme rays of a symmetric cone $\bbK$ and primitive idempotents of its associated Euclidean Jordan algebra $\bbE$~\cite[Proposition~\RNum{4}.3.2]{FK1994}.
Using this correspondence, we see that for any positive integer $k$ not exceeding the rank of $\bbK$, there is a one-to-one correspondence between families of $k$ mutually orthogonal extreme rays of $\bbK$ and sets of $k$ orthogonal primitive idempotents in $\bbE$.
In particular, let $\delta_1,\dots,\delta_k \in \bbK$ be mutually orthogonal elements that generate extreme rays of $\bbK$.
Since each $\delta_i$ generates an extreme ray of $\bbK$, it is a positive multiple of some primitive idempotent $c_i$ in $\bbE$.
Consequently, $\{c_1,\dots,c_k\}$ forms a set of orthogonal primitive idempotents in $\bbE$.

The nonnegative orthant $\bbR_+^n$ is an example of a symmetric cone.
Its associated Euclidean Jordan algebra is the \emph{Hadamard Euclidean Jordan algebra} $(\bbR^n,\circ,\bullet)$, where the bilinear product $\circ$ and the inner product $\bullet$ are defined by
\begin{equation*}
\bm{x}\circ \bm{y} \coloneqq \bm{x}\odot \bm{y}
\quad \text{and} \quad
\bm{x}\bullet \bm{y} \coloneqq \bm{x}^\top \bm{y},
\end{equation*}
respectively, for $\bm{x},\bm{y}\in \bbR^n$.
The Jordan frame of this Euclidean Jordan algebra is equal to the standard basis $\{\bm{e}_1,\dots,\bm{e}_n\}$ for $\bbR^n$.
This implies that the rank of the Hadamard Euclidean Jordan algebra $\bbR^n$ is $n$, and so is that of the nonnegative orthant $\bbR_+^n$.

\subsection{Linear Mappings}
We assume throughout this subsection that $(\bbV,\bullet)$ is a finite-dimensional real inner product space.
For a subspace $\bbU$ of $\bbV$, we write $P_{\bbU}\colon \bbV \to \bbU$ for the orthogonal projection associated with the direct sum $\bbV = \bbU \oplus \bbU^\perp$.
For $a,b\in \bbV$, the tensor product of $a$ and $b$ is written as $a\otimes b$.
For simplicity, we write $a^{\otimes 2}$ for the tensor product of $a$ with itself.
Under the natural isomorphism between the tensor product of $\bbV$ with itself and the space of linear transformations on $\bbV$, the tensor product $a\otimes b$ can be regarded as the linear transformation $x\mapsto (b\bullet x)a$.
The space of self-adjoint linear transformations on $\bbV$ is denoted by $\calS(\bbV)$.
We use $\langle \cdot,\cdot\rangle$ to denote the trace inner product on $\calS(\bbV)$.
For $A \in \calS(\bbV)$, we define $q_A \colon \bbV \to \bbR$ by $q_A(x) \coloneqq x\bullet A(x)$ for $x\in \bbV$.

We assume in the following that $\bbV$ can be written as the orthogonal direct sum
\begin{equation}
\bbV = \bigoplus_{i=1}^k\bbV_i. \label{eq:V_orth_decomp}
\end{equation}
For every $A\in \calS(\bbV)$ and $i,j = 1,\dots,k$, we define $A_{i,j}\colon \bbV_j \to \bbV_i$ (or $A_{ij}$ for short) as $A_{ij} \coloneqq P_{\bbV_i}A|_{\bbV_j}$.
As explained in \cite[Section~2.2]{NL2025}, $A_{ij}$ may be viewed as the ``$(i,j)$th element'' of $A$ in analogy with symmetric matrices.
For example, for every $x\in \bbV$, we can write, in accordance with \eqref{eq:V_orth_decomp}, $x = \sum_{i=1}^k x_i$ with $x_i \in \bbV_i$.
Then it follows that
\begin{equation*}
x\bullet A(x) = \sum_{i=1}^k x_i \bullet A_{ii}(x_i) + 2\sum_{1\le i<j\le k}x_i\bullet A_{ij}(x_j).
\end{equation*}
Accordingly, we can write $A$ in the following matrix-like representation:
\begin{equation*}
A = (A_{ij})_{1\le i\le j\le k} = \bordermatrix{
& 1 & 2 & \cdots & k \cr
& A_{11} & A_{12} & \cdots & A_{1k} \cr
&  & A_{22} & \cdots & A_{2k} \cr
&  &  & \ddots & \vdots \cr
&  &  &   & A_{kk}}.
\end{equation*}

The following lemma collects basic facts about self-adjoint linear transformations.
The proof is straightforward and is omitted.
\begin{lemma}\label{lem:submatrix_zero}
Let $A,B\in \calS(\bbV)$.
\begin{enumerate}[(i)]
\item For every $i,j = 1,\dots,k$, we have $A_{ij} = 0$ if and only if $x_i \bullet A_{ij}(x_j) = 0$ for all $x_i\in \bbV_i$ and $x_j\in \bbV_j$. \label{enum:element_0}
\item For each $j = 1,2$ and a subset $I_j$ of $\{1,\dots,k\}$, we define $\bbU_j \coloneqq \bigoplus_{i\in I_j}\bbV_i$.
Then $P_{\bbU_1}A|_{\bbU_2} = P_{\bbU_1}B|_{\bbU_2}$ if and only if $A_{ij} = B_{ij}$ for all $(i,j)\in I_1\times I_2$.
In particular, by the symmetry of $A$ and $B$, we see that $P_{\bbU_1}A|_{\bbU_1} = P_{\bbU_1}B|_{\bbU_1}$ if and only if $A_{ij} = B_{ij}$ for all $(i,j)\in I_1\times I_1$ such that $i\le j$.
Consequently, $A = B$ if and only if $A_{ij} = B_{ij}$ for all $1\le i\le j\le k$.
\label{enum:element_equal}
\end{enumerate}
\end{lemma}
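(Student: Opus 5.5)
The plan is to work directly from the definitions of the blocks $A_{ij} = P_{\bbV_i}A|_{\bbV_j}$ and of orthogonal projections, using only that the decomposition \eqref{eq:V_orth_decomp} is orthogonal. No earlier results are needed beyond elementary linear algebra.

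For part~\eqref{enum:element_0}, the forward direction is immediate: if $A_{ij}=0$, then $x_i\bullet A_{ij}(x_j)=0$ for all $x_i\in\bbV_i$ and $x_j\in\bbV_j$. For the converse, fix $x_j\in\bbV_j$. The vector $A_{ij}(x_j)$ lies in $\bbV_i$ and is orthogonal to every $x_i\in\bbV_i$. Taking $x_i=A_{ij}(x_j)$ gives $\norm{A_{ij}(x_j)}^2=0$, hence $A_{ij}(x_j)=0$.

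For part~\eqref{enum:element_equal}, set $C\coloneqq A-B$, which lies in $\calS(\bbV)$. By linearity of restriction and projection, both sides of each equivalence concern $C$ vanishing, since $C_{ij}=A_{ij}-B_{ij}$ and $P_{\bbU_1}C|_{\bbU_2}=P_{\bbU_1}A|_{\bbU_2}-P_{\bbU_1}B|_{\bbU_2}$. So it suffices to show that $P_{\bbU_1}C|_{\bbU_2}=0$ if and only if $C_{ij}=0$ for all $(i,j)\in I_1\times I_2$.

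The key observation is that, because the decomposition is orthogonal, $P_{\bbU_1}=\sum_{i\in I_1}P_{\bbV_i}$ on $\bbV$, and every $u\in\bbU_2$ can be written as $u=\sum_{j\in I_2}u_j$ with $u_j\in\bbV_j$. Hence
\[
P_{\bbU_1}C(u)=\sum_{i\in I_1}\sum_{j\in I_2}C_{ij}(u_j).
\]
If all $C_{ij}$ with $(i,j)\in I_1\times I_2$ vanish, the right-hand side is zero, which gives one direction. Conversely, suppose $P_{\bbU_1}C|_{\bbU_2}=0$. Applying it to $u=u_j\in\bbV_j$ and then applying $P_{\bbV_i}$ extracts the single term $C_{ij}(u_j)$, because the terms $C_{i'j}(u_j)$ for $i'\neq i$ lie in the spaces $\bbV_{i'}$, which are orthogonal to $\bbV_i$. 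Thus $C_{ij}=0$.

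For the "in particular" statement, take $I_1=I_2$. Self-adjointness of $C$ gives $x_i\bullet C_{ij}(x_j)=C(x_i)\bullet x_j=x_j\bullet C_{ji}(x_i)$, so $C_{ji}$ is the adjoint of $C_{ij}$. Hence $C_{ij}=0$ if and only if $C_{ji}=0$, and it suffices to check the pairs with $i\le j$. The final claim, $A=B$ if and only if $A_{ij}=B_{ij}$ for all $1\le i\le j\le k$, is the case $I_1=I_2=\{1,\dots,k\}$, where $\bbU_1=\bbU_2=\bbV$ and $P_{\bbV}$ is the identity.

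None of these steps is a real obstacle. The only point that needs care is the identity $P_{\bbU_1}=\sum_{i\in I_1}P_{\bbV_i}$, and this is exactly where orthogonality of the decomposition is used.
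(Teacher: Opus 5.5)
Your proof is correct and complete. The paper omits this proof as straightforward, and your argument is exactly the routine verification that omission presupposes: choosing $x_i=A_{ij}(x_j)$ for part (i), and for part (ii) reducing to $C=A-B$, writing $P_{\bbU_1}=\sum_{i\in I_1}P_{\bbV_i}$ by orthogonality, and using self-adjointness to see that $C_{ji}$ is the adjoint of $C_{ij}$.
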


Using matrix terminology, we interpret \eqref{enum:element_equal} of Lemma~\ref{lem:submatrix_zero} as follows: the submatrices extracted from two symmetric matrices by the same rows and columns are identical if and only if the corresponding elements of these submatrices are identical.
Thus, for $A\in \calS(\bbV)$, we refer to the mapping of the form $P_{\bbU}A|_{\bbU} \in \calS(\bbU)$ for some subspace $\bbU$ of $\bbV$ as a \emph{principal subtransformation} of $A$.

\subsection{Copositive and Completely Positive Cones}
Let $(\bbV,\bullet)$ be a finite-dimensional real inner product space and $\bbK$ be a closed cone in $\bbV$.
We recall the definitions of copositivity and complete positivity over $\bbK$ introduced in Section~\ref{sec:intro}.
We define the cones of mappings $A\in \calS(\bbV)$ that are copositive over $\bbK$ and completely positive over $\bbK$ as $\COP(\bbK)$ and $\CP(\bbK)$, respectively.
When we emphasize the domain (and the range) $\bbV$ of elements in $\COP(\bbK)$, we use $\COP(\bbK;\bbV)$ instead of $\COP(\bbK)$.
In other words, $\COP(\bbK;\bbU)$ is a cone in $\calS(\bbU)$ for a subspace $\bbU$ of $\bbV$ with $\bbK \subseteq \bbU$, and $\COP(\bbK;\bbU)$ is not equal to $\COP(\bbK;\bbV)$ in general.
Note that if $\bbU$ is a subspace of $\bbV$ with $\bbK \subseteq \bbU$, then every element in $\bbK$ lies in both $\bbV$ and $\bbU$, so it is unnecessary to indicate whether $\CP(\bbK)$ is a subset of $\calS(\bbV)$ or $\calS(\bbU)$.
For that reason, we do not use notation like $\CP(\bbK;\bbV)$.
It is known that $\COP(\bbK;\bbV)$ and $\CP(\bbK)$ are dual to each other~\cite[Section~2]{SZ2003}.
When $\bbV = \bbR^n$ and $\bbK = \bbR_+^n$, under the linear isomorphism between $\calS(\bbR^n)$ and $\bbS^n$, the sets $\COP(\bbR_+^n)$ and $\CP(\bbR_+^n)$ can be viewed as subsets of $\bbS^n$, which we write as $\COP^n$ and $\CP^n$, respectively.

The following lemma corresponds to the fact that a principal submatrix of a copositive matrix is also copositive~\cite[Proposition~2.4]{SB2021}.
The proof follows immediately from the definitions, so we omit it.
\begin{lemma}\label{lem:subtf_cop}
Let $\bbV'$ be a subspace of $\bbV$ and $\bbK'$ be a closed cone included in $\bbK \cap \bbV'$.
For every $A \in \COP(\bbK;\bbV)$, we have $P_{\bbV'}A|_{\bbV'} \in \COP(\bbK';\bbV')$.
\end{lemma}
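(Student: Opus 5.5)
The claim in Lemma~\ref{lem:subtf_cop} follows directly from the definitions, by comparing the quadratic forms $q_A$ and $q_{A'}$ on $\bbK'$, where we write $A' \coloneqq P_{\bbV'}A|_{\bbV'}$. First we check that $A'$ is a legitimate element of $\calS(\bbV')$. For $x,y\in\bbV'$ we have
\begin{equation*}
x\bullet A'(y) = x\bullet P_{\bbV'}A(y) = x\bullet A(y),
\end{equation*}
because $x\in\bbV'$ and $P_{\bbV'}$ is the orthogonal projection onto $\bbV'$, so $x\bullet(z - P_{\bbV'}z) = 0$ for every $z\in\bbV$. Since $A$ is self-adjoint on $\bbV$, it follows that $x\bullet A'(y) = A(x)\bullet y = x\bullet A(y) = A'(x)\bullet y$ for all $x,y\in\bbV'$. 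Hence $A'$ is self-adjoint on $\bbV'$, with $\bbV'$ carrying the restricted inner product.

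Next we show copositivity over $\bbK'$. The identity above with $y=x$ gives $q_{A'}(x) = q_A(x)$ for all $x\in\bbV'$. Let $x\in\bbK'$. Since $\bbK'\subseteq\bbK\cap\bbV'$, we have both $x\in\bbV'$ and $x\in\bbK$. The first gives $q_{A'}(x) = q_A(x)$, and the second gives $q_A(x)\ge 0$ because $A\in\COP(\bbK;\bbV)$. Therefore $q_{A'}(x)\ge0$ for every $x\in\bbK'$. Moreover, $\bbK'$ is a closed cone contained in $\bbV'$, so $\COP(\bbK';\bbV')$ is well defined. This gives $A'\in\COP(\bbK';\bbV')$.

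There is no real obstacle here. The only step needing care is the projection identity $x\bullet P_{\bbV'}z = x\bullet z$ for $x\in\bbV'$. This is exactly why the hypothesis $\bbK'\subseteq\bbV'$ is needed, and not merely $\bbK'\subseteq\bbK$.
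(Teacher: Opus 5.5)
Your proposal is correct and is exactly the definitional argument the paper has in mind when it omits the proof as following ``immediately from the definitions.'' The key point is that $q_{P_{\bbV'}A|_{\bbV'}}(x) = q_A(x)$ for $x \in \bbV'$, combined with $\bbK' \subseteq \bbK \cap \bbV'$. You also check self-adjointness of the restriction, which the paper leaves implicit.
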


We can also see the next lemma by analogy with matrices: for $\bm{A} \in \bbS^n$ and $\bm{B}\in \CP^k$ with $k\le n$, the inner product of $\bm{A}$ with the symmetric matrix obtained by padding $\bm{B}$ with zeros equals that of the corresponding principal submatrix of $\bm{A}$ with $\bm{B}$.
\begin{lemma}\label{lem:A_dot_B_reduced}
Let $\bbU$ be a subspace of $\bbV$.
For every $A \in \calS(\bbV)$ and $B \in \CP(\bbU)$, we have $\langle A,B\rangle = \langle P_{\bbU}A|_{\bbU},B\rangle$, where the inner product that appears in the left-hand side is defined on the space $\calS(\bbV)$ and that in the right-hand side is defined on the space $\calS(\bbU)$.
\end{lemma}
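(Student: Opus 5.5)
We need to prove Lemma~\ref{lem:A_dot_B_reduced}: $\langle A,B\rangle = \langle P_{\bbU}A|_{\bbU},B\rangle$ for $B\in\CP(\bbU)$.

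By the definition of complete positivity, $B=\sum_{i=1}^m b_i\otimes b_i$ with each $b_i\in\bbU$. Both sides of the claimed identity are linear in $B$, so it suffices to treat a single rank-one term $b\otimes b$ with $b\in\bbU$. Here $b\otimes b$ is read either as an element of $\calS(\bbV)$ or of $\calS(\bbU)$, namely $x\mapsto (b\bullet x)b$ on the respective space. Since $b\in\bbU$, this operator maps $\bbV$ into $\bbU$ and restricts to the same formula on $\bbU$.

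Next I compute the trace inner product with a rank-one operator. For any self-adjoint $C$ on a space $W$ containing $b$, we have $\langle C,b\otimes b\rangle=\tr(C\circ(b\otimes b))$. The composition $C\circ(b\otimes b)$ sends $x$ to $(b\bullet x)\,C(b)$, which is the rank-one operator $C(b)\otimes b$. Its trace is $b\bullet C(b)$; to see this, choose an orthonormal basis $\{e_j\}$ of $W$ and sum $e_j\bullet C(b)\,(b\bullet e_j)=b\bullet C(b)$. Thus on $\calS(\bbV)$ we get $\langle A,b\otimes b\rangle=b\bullet A(b)$, and on $\calS(\bbU)$ we get $\langle P_{\bbU}A|_{\bbU},b\otimes b\rangle=b\bullet P_{\bbU}A(b)$.

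Finally, $b\bullet P_{\bbU}A(b)=b\bullet A(b)$, because $b\in\bbU$ and $A(b)-P_{\bbU}A(b)\in\bbU^\perp$. Summing over $i$ gives the lemma. No step is a real obstacle. The only point needing care is that the inner products live on different spaces, so the trace on $\calS(\bbU)$ must be computed with an orthonormal basis of $\bbU$; the rank-one trace formula handles this uniformly. The argument in fact works for any $B\in\calS(\bbV)$ whose range lies in $\bbU$, but the lemma only needs the completely positive case.
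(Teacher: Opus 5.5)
Your argument is correct and supplies the routine verification that the paper omits (the paper only appeals to the analogy of pairing a matrix with a zero-padded $\bm{B}\in\CP^k$). The reduction to a single term $b\otimes b$ with $b\in\bbU$, the identity $\langle C,b\otimes b\rangle = b\bullet C(b)$, and $b\bullet P_{\bbU}A(b) = b\bullet A(b)$ together settle it. One cosmetic point: write the composition as $C(b\otimes b)$ rather than $C\circ(b\otimes b)$, since $\circ$ denotes the Jordan product in this paper.
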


\subsection{Spectrahedra and Their Shadows}\label{subsec:spec}
Let $X$ be a finite-dimensional real vector space.
A subset $S$ of $X$ is called a \textit{spectrahedron} if there exist a positive integer $d$, a matrix $\bm{M}_0\in \bbS^d$, and a linear mapping $\bm{M}\colon X \to \bbS^d$ such that
\begin{equation*}
S = \{x \in X \mid \bm{M}_0 + \bm{M}(x) \in \bbS_+^d\}.
\end{equation*}
A subset $C$ of $X$ is called a \textit{spectrahedral shadow}\footnote{In the literature, spectrahedral shadows are also referred to as \emph{semidefinite representable} sets~\cite{BN2001} and as \emph{projected spectrahedra}~\cite{BPT2012}, for example.} if it is a projection of a spectrahedron, i.e., there exist a finite-dimensional real vector space $Y$, a positive integer $d$, a matrix $\bm{M}_0\in \bbS^d$, and a linear mapping $\bm{M}\colon X\times Y \to \bbS^d$ such that
\begin{equation*}
C = \{x \in X \mid \text{there exists $y\in Y$ such that }\bm{M}_0 + \bm{M}(x,y)\in \bbS_+^d\}.
\end{equation*}
Every spectrahedron is, by definition, a spectrahedral shadow, but the converse does not hold in general.

The class of spectrahedral shadows is closed under intersection, inverse images under linear mappings, and duality~\cite[Theorem~3.5]{NP2023}.
The following lemma gathers these properties for later use.
\begin{lemma}\label{lem:spec_shadow}
Let $X_1,X_2$ be finite-dimensional real vector spaces.
\begin{enumerate}[(i)]
\item If $C_1$ and $C_2$ are spectrahedral shadows in $X_1$, then $C_1 \cap C_2$ is also a spectrahedral shadow. \label{enum:intersection}
\item Let $f\colon X_1 \to X_2$ be a linear mapping.
If $C$ is a spectrahedral shadow in $X_2$, then the inverse image $f^{-1}(C)$ is a spectrahedral shadow in $X_1$.\label{enum:image}
\item We further assume that $X_1$ is equipped with an inner product.
If $C$ is a spectrahedral shadow in $X_1$, then the dual cone of $C$ is also a spectrahedral shadow.\label{enum:dual}
\end{enumerate}
\end{lemma}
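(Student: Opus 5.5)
For \eqref{enum:intersection} and \eqref{enum:image}, I plan to work directly from the definition of a spectrahedral shadow. For \eqref{enum:intersection}, write $C_j = \{x \mid \exists y_j\in Y_j,\ \bm{M}_0^{(j)} + \bm{M}^{(j)}(x,y_j)\in\bbS_+^{d_j}\}$ for $j=1,2$. Take the lifting space $Y_1\times Y_2$ and the block-diagonal pencil $\mathrm{diag}(\bm{M}_0^{(1)} + \bm{M}^{(1)}(x,y_1),\ \bm{M}_0^{(2)} + \bm{M}^{(2)}(x,y_2))$. This block-diagonal matrix is positive semidefinite if and only if both blocks are, so its projection onto $X_1$ is exactly $C_1\cap C_2$. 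For \eqref{enum:image}, if $C=\{u\in X_2 \mid \exists y,\ \bm{M}_0+\bm{M}(u,y)\succeq 0\}$, then $f^{-1}(C)=\{x\in X_1 \mid \exists y,\ \bm{M}_0+\bm{M}(f(x),y)\succeq0\}$. The map $(x,y)\mapsto \bm{M}(f(x),y)$ is linear, so this is again a spectrahedral shadow.

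For \eqref{enum:dual}, I proceed in three steps.

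\emph{Step 1 (reduce to a homogeneous set).} Replace $C$ by its conic hull, which has the same dual cone. Homogenizing gives the cone
\[
K=\{x \mid \exists\, t\ge 0,\ y,\ t\bm{M}_0+\bm{M}(x,y)\succeq 0\}.
\]
$K$ contains the conic hull of $C$ and is contained in its closure. The inclusion into the closure needs a small argument at $t=0$: I would take a point of $C$ and add a small multiple, which is where recession directions come in. Since $K$ is sandwiched between the conic hull of $C$ and its closure, we get $K^*=C^*$.

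\emph{Step 2 (write $K$ as a linear image).} $K$ is the image $A(L\cap(\bbS_+^d\times\bbR_+))$ of a linear slice of a closed convex cone, where $L$ is a subspace and $A$ a linear map. Hence
\[
K^*=\{z \mid A^*z\in (L\cap P)^*\}, \qquad P=\bbS_+^d\times\bbR_+ .
\]
By \eqref{enum:image}, it then suffices to show that $(L\cap P)^*$ is a spectrahedral shadow.

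\emph{Step 3 (the main obstacle: closedness).} In general $(L\cap P)^*=\mathrm{cl}(L^\perp+P^*)$, and the sum $L^\perp+P^*$ need not be closed, so I cannot simply take it as the representation.
\begin{itemize}
\item I would first replace $P$ by the minimal face $F$ of $P$ containing $L\cap P$. Then $L\cap P=L\cap F$, and $L$ meets the relative interior of $F$.
\item Under this relative-interior (Slater-type) condition, standard conic duality gives exactly $(L\cap F)^*=L^\perp+F^*$, with no closure needed.
\item A face of $\bbS_+^d$ is linearly isomorphic to some $\bbS_+^{m}$, namely the block $\bm{U}\bbS_+^m\bm{U}^\top$. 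Its dual cone in $\bbS^d$ consists of matrices whose compression $\bm{U}^\top(\cdot)\bm{U}$ is positive semidefinite. This is a spectrahedron, and the $\bbR_+$ factor is handled the same way.
\item Therefore $L^\perp+F^*=\{w \mid \exists\, \ell\in L^\perp,\ w-\ell\in F^*\}$ is a spectrahedral shadow.
\end{itemize}
Combining Steps 2 and 3 with \eqref{enum:image} shows that $K^*=C^*$ is a spectrahedral shadow. The delicate points are the facial reduction in Step 3 and checking that the homogenization in Step 1 does not change the dual. Alternatively, one could cite the general statement \cite[Theorem~3.5]{NP2023}.
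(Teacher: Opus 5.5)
The paper gives no proof of this lemma; it cites \cite[Theorem~3.5]{NP2023}. Your argument is a correct, self-contained alternative to that citation.

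Parts (i) and (ii) follow directly from block-diagonal stacking and from composing the pencil with $f$. For part (iii) you use homogenization, then facial reduction to obtain a Slater-type condition. Under that condition $(L\cap F)^*=L^\perp+F^*$ holds with no closure, by the intersection rule for cones whose relative interiors meet. The last ingredient is that the dual of a face $\bm{U}\bbS_+^m\bm{U}^\top$ of $\bbS_+^d$ is the spectrahedron $\{\bm{Z}\mid \bm{U}^\top\bm{Z}\bm{U}\in\bbS_+^m\}$. Your route shows that the only real difficulty in (iii) is the possible non-closedness of $L^\perp+P^*$, and that facial reduction removes it. The citation simply buys brevity.

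Two points need tightening if you write it out.

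\emph{The empty case.} Step~1 fails for $C=\emptyset$. For example, $\bm{M}_0=-1$ and $\bm{M}=0$ give $K=X_1$, so $K^*=\{0\}\neq C^*=X_1$. Split this case off; it is trivial because $X_1$ is a spectrahedral shadow. For nonempty $C$, your recession-direction argument does give $\mathrm{cone}(C)\subseteq K\subseteq\overline{\mathrm{cone}(C)}$.

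\emph{The form of $K$ in Step~2.} $K$ need not be of the form $A(L\cap(\bbS_+^d\times\bbR_+))$ with $L\subseteq\bbS^d\times\bbR$. The reason is that $x$ need not be a linear function of $(t\bm{M}_0+\bm{M}(x,y),t)$; take $\bm{M}=0$. Keep $(x,y,t)$ as free coordinates instead. That is, use the cone $(X_1\times Y\times\bbR)\times\bbS_+^d\times\bbR_+$ with $L$ the graph of $(x,y,t)\mapsto(t\bm{M}_0+\bm{M}(x,y),t)$. Its faces are the free factor times faces of $\bbS_+^d\times\bbR_+$, and their duals have zero free component, so your Step~3 goes through unchanged. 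Equivalently, set $\Phi(x,y,t)=(t\bm{M}_0+\bm{M}(x,y),t)$ and work with $\Phi^{-1}(P)$. Then use $(\Phi^{-1}(F))^*=\Phi^*(F^*)$, valid when the range of $\Phi$ meets the relative interior of the minimal face $F$.
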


\subsection{Polynomials}
Let $\bbV$ be an $n$-dimensional real vector space equipped with an inner product denoted by $(\cdot,\cdot)$.
In general, every element in the symmetric algebra on the dual space of $\bbV$ is regarded as a polynomial (function) with real coefficients on $\bbV$~\cite[Section~\RNum{5}.4.2]{Satake1975}.
Let $\{v_1,\dots,v_n\}$ be a basis for $\bbV$.
The linear mapping $x\mapsto (x,\cdot)$ is a natural isomorphism from $\bbV$ to its dual space.
Then a homogeneous polynomial (or simply, a form) $f(x)$ of degree $m$ with real coefficients on $\bbV$ can be written as
\begin{equation*}
f(x) = \sum_{i_1,\dots,i_m=1}^n f_{i_1\cdots i_m}(v_{i_1},x)\cdots (v_{i_m},x),
\end{equation*}
where each $f_{i_1\cdots i_m}$ is a real number.
The definition of homogeneous polynomials in the author's previous work~\cite{NN2024_Approximation} is stated in a different form, but it is consistent with the present definition.
A polynomial $f$ on $\bbV$ is said to be a \emph{sum of squares} if it can be written as a sum of squares of polynomials, and is said to be \emph{positive semidefinite} if $f(x) \ge 0$ for all $x\in \bbV$.
We write $\Sigma^{n,2m}(\bbV)$ for the cone of sums of squares of forms of degree $m$ with real coefficients on $\bbV$.

\section{Nonexistence of Spectrahedral Shadow Representations for Copositive and Completely Positive Cones over Symmetric Cones}\label{sec:non_spectrahedral_shadow}
Let $\bbK$ be a symmetric cone of rank $r$ in a finite-dimensional real inner product space $(\bbE,\bullet)$.
In this section, we show that neither $\COP(\bbK)$ nor $\CP(\bbK)$ is a spectrahedral shadow if $r \ge 5$.
We prove this by taking a slice of $\COP(\bbK)$ that is linearly isomorphic to $\COP^r$.

For $\delta_1,\dots,\delta_r \in \bbK$ that are orthogonal to each other and generate extreme rays of $\bbK$, we define
\begin{equation}
\calV \coloneqq \left\{\sum_{i,j=1}^r M_{ij}\delta_i\otimes \delta_j \relmiddle| \bm{M} \in \bbS^r\right\}, \label{eq:calV}
\end{equation}
which is a subspace of $\calS(\bbE)$.
We note that the linear mapping $f\colon \bbS^r \to \calV$ defined as $f(\bm{M}) \coloneqq \sum_{i,j=1}^r M_{ij}\delta_i\otimes \delta_j$ is a linear isomorphism,
since the vectors $\delta_i^{\otimes 2}$ for $i = 1,\dots,r$ and $\delta_i\otimes \delta_j +  \delta_j\otimes \delta_i$ for $1\le i< j\le r$ are linearly independent.

\begin{lemma}\label{lem:COPK_slice}
It follows that $\COP(\bbK) \cap \calV = f(\COP^r)$.
In particular, $\COP(\bbK) \cap \calV$ is linearly isomorphic to $\COP^r$.
\end{lemma}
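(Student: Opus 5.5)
Since $f$ is a linear isomorphism onto $\calV$, it suffices to show that for $\bm{M}\in \bbS^r$ we have $f(\bm{M}) \in \COP(\bbK)$ if and only if $\bm{M}\in \COP^r$. The plan is to compute the quadratic form of $A \coloneqq f(\bm{M})$ directly. By the definition of the tensor product as the map $x\mapsto (b\bullet x)a$, we get
\begin{equation*}
q_A(x) = x\bullet A(x) = \sum_{i,j=1}^r M_{ij}(\delta_i\bullet x)(\delta_j\bullet x) = \bm{y}(x)^\top \bm{M}\,\bm{y}(x),
\end{equation*}
where $\bm{y}(x) \coloneqq (\delta_1\bullet x,\dots,\delta_r\bullet x)^\top$. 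Thus everything reduces to understanding the image of $\bbK$ under the linear map $x\mapsto \bm{y}(x)$. I will show that this image is exactly $\bbR_+^r$; then $q_A\ge 0$ on $\bbK$ is equivalent to $\bm{y}^\top\bm{M}\bm{y}\ge 0$ for all $\bm{y}\in\bbR_+^r$, which is copositivity of $\bm{M}$.

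For the inclusion $\bm{y}(\bbK)\subseteq \bbR_+^r$, I use self-duality of $\bbK$: each $\delta_i\in\bbK$ and $x\in\bbK$, so $\delta_i\bullet x\ge 0$. For the reverse inclusion, I use the fact recalled in Section~\ref{subsec:EJA} that each $\delta_i$ is a positive multiple $\alpha_i c_i$ of a primitive idempotent $c_i$, with $\{c_1,\dots,c_r\}$ orthogonal primitive idempotents; since there are $r$ of them, they form a Jordan frame. Given $\bm{y}\in\bbR_+^r$, set $x \coloneqq \sum_{k=1}^r \frac{y_k}{\alpha_k\norm{c_k}^2}c_k$. This lies in $\bbK$ because each $c_k = c_k^2 \in\bbK$ and $\bbK$ is a convex cone. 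Orthogonality in the Jordan sense gives $c_i\bullet c_k = c_i^2\bullet c_k = c_i\bullet(c_i\circ c_k) = 0$ for $i\neq k$ by associativity of $\bullet$, so $\delta_i\bullet x = y_i$.

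With the image identified, both directions follow at once. If $\bm{M}\in\COP^r$, then $q_A(x) = \bm{y}(x)^\top\bm{M}\bm{y}(x)\ge 0$ for $x\in\bbK$. Conversely, if $A\in\COP(\bbK)$, then for any $\bm{y}\in\bbR_+^r$ we pick $x\in\bbK$ with $\bm{y}(x)=\bm{y}$ and obtain $\bm{y}^\top\bm{M}\bm{y} = q_A(x)\ge 0$. The "in particular'' statement holds because $f$ restricts to a linear isomorphism from $\COP^r$ onto $f(\COP^r)$, and $\setspan$ is preserved.

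The only step needing care is the surjectivity onto $\bbR_+^r$, specifically the orthogonality $c_i\bullet c_k=0$ and the resulting diagonal structure. This is routine from the associative inner product as shown above, so I expect no real obstacle.
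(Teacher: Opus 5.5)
Your proposal is correct and follows essentially the same route as the paper: it computes $q_A(x)=\bm{y}^\top\bm{M}\bm{y}$ with $y_i=\delta_i\bullet x$, uses self-duality for one direction, and for the other uses the witness $x=\sum_k \frac{y_k}{\alpha_k\norm{c_k}^2}c_k$. This witness coincides with the paper's $\sum_k \frac{y_k}{\delta_k\bullet\delta_k}\delta_k$. The detour through primitive idempotents, Jordan frames, and associativity is unnecessary, since the hypothesis already gives $\delta_i\bullet\delta_k=0$ for $i\neq k$ and $\delta_k\in\bbK$ directly.
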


\begin{proof}
Let $A \in f(\COP^r)$.
Then there exists $\bm{M} \in \COP^r$ such that $A = \sum_{i,j=1}^r M_{ij}\delta_i\otimes \delta_j$.
It follows from the definition of $\calV$ that $A \in \calV$.
Let $x\in \bbK$ and set $y_i \coloneqq \delta_i \bullet x$ for each $i = 1,\dots,r$.
Since every $\delta_i$ belongs to $\bbK$ and $\bbK$ is self-dual, the vector $\bm{y} \coloneqq (y_1,\dots,y_r)^\top$ belongs to $\bbR_+^r$.
Therefore, it follows from $\bm{M} \in \COP^r$ that
\begin{equation*}
q_A(x) = \bm{y}^\top\bm{M}\bm{y} \ge 0.
\end{equation*}
Since $x\in \bbK$ is arbitrary, we have $A\in \COP(\bbK)$.

Conversely, let $A \in \COP(\bbK) \cap \calV$.
By $A \in \calV$, there exists $\bm{M}\in \bbS^r$ such that $A = \sum_{i,j=1}^r M_{ij}\delta_i\otimes \delta_j$.
For any $\bm{y} \in \bbR_+^r$, we define
\begin{equation*}
x \coloneqq \sum_{i=1}^r\frac{y_i}{\delta_i\bullet \delta_i}\delta_i \in \bbK.
\end{equation*}
Then it follows from $A \in \COP(\bbK)$ that
\begin{align*}
0&\le q_A(x)\\
&= \sum_{i,j=1}^r\frac{y_iy_j}{(\delta_i\bullet \delta_i)(\delta_j\bullet \delta_j)}\delta_i\bullet \left(\sum_{k,l=1}^r M_{kl}\delta_k\otimes \delta_l\right)(\delta_j) \\
&= \sum_{i,j=1}^rM_{ij}y_iy_j\\
&= \bm{y}^\top\bm{M}\bm{y},
\end{align*}
where the second equality follows from the orthogonality of $\delta_1,\dots,\delta_r$.
This indicates $\bm{M} \in \COP^r$, so we see that $A \in f(\COP^r)$.
\end{proof}

\begin{theorem}\label{thm:COP_not_shadow}
Suppose that the rank $r$ of the symmetric cone $\bbK$ is at least $5$.
Then neither $\COP(\bbK)$ nor $\CP(\bbK)$ is a spectrahedral shadow.
\end{theorem}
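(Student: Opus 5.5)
We argue by contradiction, combining the slice in Lemma~\ref{lem:COPK_slice} with the closure properties in Lemma~\ref{lem:spec_shadow} and the result of Bodirsky, Kummer, and Thom~\cite[Corollary~3.18]{BKT2026} that $\COP^r$ is not a spectrahedral shadow for $r\ge 5$. First fix mutually orthogonal elements $\delta_1,\dots,\delta_r\in\bbK$, each generating an extreme ray. They exist because a Jordan frame $\{c_1,\dots,c_r\}$ of the associated Euclidean Jordan algebra consists of orthogonal primitive idempotents, which generate orthogonal extreme rays. Then define $\calV$ and the linear isomorphism $f\colon\bbS^r\to\calV$ as in \eqref{eq:calV}.

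Suppose $\COP(\bbK)$ is a spectrahedral shadow in $\calS(\bbE)$. The subspace $\calV$ is a spectrahedron: writing $\calV$ as the zero set of finitely many linear functionals $\ell_1,\dots,\ell_p$, it is the set where the diagonal matrix with entries $\pm\ell_k(A)$ is positive semidefinite. By Lemma~\ref{lem:spec_shadow}\eqref{enum:intersection}, $\COP(\bbK)\cap\calV$ is then a spectrahedral shadow in $\calS(\bbE)$.

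Next consider the linear map $g\coloneqq \iota\circ f\colon\bbS^r\to\calS(\bbE)$, where $\iota$ is the inclusion of $\calV$. By Lemma~\ref{lem:COPK_slice} and the injectivity of $f$,
\begin{equation*}
g^{-1}\bigl(\COP(\bbK)\cap\calV\bigr)=f^{-1}\bigl(f(\COP^r)\bigr)=\COP^r.
\end{equation*}
By Lemma~\ref{lem:spec_shadow}\eqref{enum:image}, $\COP^r$ would then be a spectrahedral shadow, contradicting \cite[Corollary~3.18]{BKT2026} since $r\ge5$. Hence $\COP(\bbK)$ is not a spectrahedral shadow.

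For $\CP(\bbK)$, recall that $\COP(\bbK;\bbE)$ and $\CP(\bbK)$ are dual to each other with respect to the trace inner product on $\calS(\bbE)$~\cite[Section~2]{SZ2003}. Since $\COP(\bbK)$ is a closed convex cone, it equals the dual cone of $\CP(\bbK)$. If $\CP(\bbK)$ were a spectrahedral shadow, Lemma~\ref{lem:spec_shadow}\eqref{enum:dual} would make its dual $\COP(\bbK)$ one as well, contradicting the first part.

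Most of the work is already done by Lemma~\ref{lem:COPK_slice}. The points that need care are minor: checking that a linear subspace is a spectrahedron, and confirming that the duality statement gives $\COP(\bbK)$ exactly as the dual of $\CP(\bbK)$, not merely up to closure. The latter holds because $\COP(\bbK)$ is closed and convex.
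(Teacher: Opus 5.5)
Your proposal is correct and follows the paper's proof: intersect $\COP(\bbK)$ with the slice $\calV$, transfer to $\COP^r$ via Lemma~\ref{lem:COPK_slice}, contradict \cite[Corollary~3.18]{BKT2026}, and then use duality to handle $\CP(\bbK)$. Your explicit diagonal spectrahedron for $\calV$ and the inverse image under $\iota\circ f$ spell out what the paper states more briefly as ``$\calV$ is polyhedral'' and ``linearly isomorphic.''
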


\begin{proof}
We suppose that $\COP(\bbK)$ is a spectrahedral shadow.
The subspace $\calV$ is polyhedral, so it is a spectrahedral shadow~\cite[page~38]{RG1995}.
Therefore, by \eqref{enum:intersection} of Lemma~\ref{lem:spec_shadow}, the intersection $\COP(\bbK) \cap \calV$ is also a spectrahedral shadow.
Since $\COP(\bbK) \cap \calV$ is linearly isomorphic to $\COP^r$ by Lemma~\ref{lem:COPK_slice}, $\COP^r$ is also a spectrahedral shadow.
However, $\COP^r$ is not a spectrahedral shadow~\cite[Corollary~3.18]{BKT2026}, which is a contradiction.
Thus, $\COP(\bbK)$ is not a spectrahedral shadow.
By the duality between $\COP(\bbK)$ and $\CP(\bbK)$, it follows from \eqref{enum:dual} of Lemma~\ref{lem:spec_shadow} that $\CP(\bbK)$ is not a spectrahedral shadow either.
\end{proof}

In fact, we can directly show that $\CP(\bbK)$ is not a spectrahedral shadow if $r\ge 5$.
For the linear subspace $\calV$ defined in \eqref{eq:calV}, it follows that $\CP(\bbK) \cap \calV = f(\CP^r)$, which is linearly isomorphic to $\CP^r$.
Thus, in a way similar to Theorem~\ref{thm:COP_not_shadow}, we see that $\CP(\bbK)$ is not a spectrahedral shadow.

\section{Horn Transformations and Their Exposedness}\label{sec:Horn}
Throughout this section, let $\bbK$ denote a symmetric cone of rank at least $5$ in a finite-dimensional inner product space $(\bbE,\bullet)$.
Let $(\bbE,\circ,\bullet)$ be an associated Euclidean Jordan algebra satisfying $\bbE_+ = \bbK$.
For a set $\Delta = \{\delta_1,\dots,\delta_5\}$ whose elements are pairwise orthogonal and generate extreme rays of $\bbK$, we define
\begin{equation}
H_{\Delta} \coloneqq \sum_{i=1}^5 \delta_i^{\otimes 2} - \sum_{i=1}^5(\delta_i \otimes \delta_{i\plmod 1} + \delta_{i\plmod 1}\otimes \delta_i) + \sum_{i=1}^5(\delta_i \otimes \delta_{i\plmod 2} + \delta_{i\plmod 2}\otimes \delta_i), \label{eq:Horn_tr}
\end{equation}
where $\plmod$ denotes the addition modulo $5$ on $\{1,\dots,5\}$ (e.g., $4 \plmod 2 = 1$).
We call $H_{\Delta}$ the \emph{Horn transformation}.

As the name indicates, when the symmetric cone $\bbK$ is the nonnegative orthant $\bbR_+^5$, the Horn transformation reduces to the Horn matrix $\bm{H}$ introduced in \eqref{eq:Horn_mat}.
The extreme rays of $\bbR_+^5$ are generated by $\bm{e}_1,\dots,\bm{e}_5$.
Under the linear isomorphism between $\calS(\bbR^5)$ and $\bbS^5$, we have
\begin{equation*}
H_{\{\bm{e}_1,\dots,\bm{e}_5\}} = \sum_{i=1}^5 \bm{e}_i\bm{e}_i^\top - \sum_{i=1}^5(\bm{e}_i\bm{e}_{i\plmod 1}^\top + \bm{e}_{i\plmod 1}\bm{e}_i^\top)
+ \sum_{i=1}^5(\bm{e}_i\bm{e}_{i\plmod 2}^\top + \bm{e}_{i\plmod 2}\bm{e}_i^\top) = \bm{H}.
\end{equation*}

The goal of this section is to show that the Horn transformation $H_{\Delta}$ generates an exposed ray of $\COP(\bbK)$.
This result extends the fact that the Horn matrix $\bm{H}$ generates an exposed ray of $\COP^5$, which is a consequence of \cite[page~335]{HN1963} and \cite[Theorem~4.6.\Rnum{3}]{Dickinson2011}.

We provide a brief outline of the proof.
First, we establish the result within the framework of Euclidean Jordan algebras.
For a set $C = \{c_1,\dots,c_5\}$ of orthogonal primitive idempotents in the Euclidean Jordan algebra $\bbE$, we first show that $\bbR_+ H_C$ is an exposed ray of $\COP(\bbK)$ when $\bbE$ has rank~$5$, as proved in Lemma~\ref{lem:Horn_exposed_rank5}, and then extend this result to arbitrary rank in Lemma~\ref{lem:Horn_exposed}.
Finally, using Lemma~\ref{lem:Horn_exposed}, we prove Theorem~\ref{thm:Horn_exposed_symcone}, which is the main result of this section.

Before proceeding further, we establish the following two basic lemmas on Horn transformations.

\begin{lemma}\label{lem:Horn_not_psd}
For a set $\Delta = \{\delta_1,\dots,\delta_5\}$ whose elements are pairwise orthogonal and generate extreme rays of $\bbK$, the Horn transformation $H_{\Delta}$ is not positive semidefinite.
\end{lemma}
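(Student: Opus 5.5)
To show that $H_{\Delta}$ is not positive semidefinite, we exhibit a single $x\in\bbE$ with $q_{H_\Delta}(x) = x\bullet H_{\Delta}(x) < 0$. Everything reduces to the Horn matrix $\bm{H}$ in \eqref{eq:Horn_mat}, which is known not to be positive semidefinite.

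The key identity is the one already used in the proof of Lemma~\ref{lem:COPK_slice}. For any $x\in\bbE$, set $y_i \coloneqq \delta_i\bullet x$ for $i=1,\dots,5$ and $\bm{y}\coloneqq(y_1,\dots,y_5)^\top$. Since $(a\otimes b)(x) = (b\bullet x)a$, the definition \eqref{eq:Horn_tr} gives
\begin{equation*}
q_{H_\Delta}(x) = \sum_{i,j=1}^5 H_{ij}\,(\delta_i\bullet x)(\delta_j\bullet x) = \bm{y}^\top \bm{H}\bm{y}.
\end{equation*}
Because $\delta_1,\dots,\delta_5$ are pairwise orthogonal and nonzero, the linear map $x\mapsto \bm{y}$ from $\bbE$ onto $\bbR^5$ is surjective. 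Explicitly, for any $\bm{y}\in\bbR^5$ we may take
\begin{equation*}
x \coloneqq \sum_{i=1}^5 \frac{y_i}{\delta_i\bullet\delta_i}\delta_i,
\end{equation*}
with no sign restriction on $\bm{y}$ this time. Consequently $H_\Delta$ is positive semidefinite if and only if $\bm{H}$ is.

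It therefore suffices to find $\bm{y}\in\bbR^5$ with $\bm{y}^\top\bm{H}\bm{y}<0$. One choice is $\bm{y}=(1,1,0,0,0)^\top$, which gives $H_{11}+2H_{12}+H_{22} = 1-2+1 = 0$, so that vector is not enough. Instead take $\bm{y}=(1,1,0,-1,0)^\top$. Its quadratic form is the sum of entries of the principal submatrix of $\bm{H}$ on indices $\{1,2,4\}$, weighted by signs:
\begin{equation*}
H_{11}+H_{22}+H_{44}+2H_{12}-2H_{14}-2H_{24} = 3-2-2-2 = -3 < 0.
\end{equation*}
Alternatively, one can use $\bm{y}=(1,1,1,1,1)^\top$ with circulant eigenvalues; the entry sum is $5 - 10 + 10 = 5 > 0$, so that particular vector fails, and the explicit vector above is the one to use.

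The only real obstacle is the arithmetic in the last step: choosing a concrete vector and checking its quadratic form, which is quick. The orthogonality of the $\delta_i$ is exactly what makes the reduction to $\bm{H}$ exact, so no Jordan-algebraic machinery is needed beyond what Lemma~\ref{lem:COPK_slice} already uses.
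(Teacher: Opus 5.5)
Your proof is correct and is essentially the paper's: the paper takes $x = -\delta_1/\norm{\delta_1}^2 - \delta_2/\norm{\delta_2}^2 + \delta_4/\norm{\delta_4}^2$, which is your construction with $\bm{y} = (-1,-1,0,1,0)^\top$ (the negative of your vector), and it obtains the same value $q_{H_\Delta}(x) = -3$. Your discarded trials with $(1,1,0,0,0)^\top$ and $(1,1,1,1,1)^\top$ are harmless but can simply be deleted.
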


\begin{proof}
We let $x \coloneqq -\delta_1/\norm{\delta_1}^2 - \delta_2/\norm{\delta_2}^2 + \delta_4/\norm{\delta_4}^2 \in \bbE$.
Then we have $q_{H_{\Delta}}(x) = -3$.
Therefore, $H_{\Delta}$ is not positive semidefinite.
\end{proof}

\begin{lemma}\label{lem:Horn_cop}
For a set $\Delta = \{\delta_1,\dots,\delta_5\}$ whose elements are pairwise orthogonal and generate extreme rays of $\bbK$, we have $H_{\Delta} \in \COP(\bbK)$.
\end{lemma}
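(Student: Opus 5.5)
The plan is to reduce the statement to the known copositivity of the Horn matrix $\bm{H}$, in the same way as the first half of the proof of Lemma~\ref{lem:COPK_slice}.

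First I will compute the quadratic form. For any $x \in \bbE$, put $y_i \coloneqq \delta_i \bullet x$ for $i = 1,\dots,5$ and $\bm{y} \coloneqq (y_1,\dots,y_5)^\top$. Since $(a\otimes b)(x) = (b\bullet x)a$, we have $x\bullet (\delta_i\otimes\delta_j)(x) = y_iy_j$. Expanding \eqref{eq:Horn_tr} term by term then gives
\begin{equation*}
q_{H_{\Delta}}(x) = \sum_{i=1}^5 y_i^2 - 2\sum_{i=1}^5 y_iy_{i\plmod 1} + 2\sum_{i=1}^5 y_iy_{i\plmod 2} = \bm{y}^\top \bm{H}\bm{y}.
\end{equation*}
The last equality is the identity $H_{\{\bm{e}_1,\dots,\bm{e}_5\}} = \bm{H}$ already noted in the text. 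Equivalently, $H_\Delta = \iota(\bm{H})$, where $\iota(\bm{M}) \coloneqq \sum_{i,j=1}^5 M_{ij}\delta_i\otimes\delta_j$.

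Next I will check the sign of $\bm{y}$. If $x \in \bbK$, then each $y_i = \delta_i\bullet x \ge 0$, because $\delta_i \in \bbK$ and $\bbK$ is self-dual. Hence $\bm{y}\in\bbR_+^5$.

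The conclusion then needs $\bm{H}\in\COP^5$. This is classical (Hall and Newman~\cite{HN1963}), and I would simply cite it. For self-containedness, one can also use the standard identity
\begin{equation*}
\bm{y}^\top\bm{H}\bm{y} = (y_1 - y_2 + y_3 + y_4 - y_5)^2 + 4y_2y_4 + 4y_3(y_5 - y_4),
\end{equation*}
which settles the case $y_5 \ge y_4$. The case $y_5 < y_4$ is handled by the cyclic symmetry of $\bm{H}$ together with a second identity of the same type:
\begin{equation*}
\bm{y}^\top\bm{H}\bm{y} = (y_1 - y_2 + y_3 - y_4 + y_5)^2 + 4y_1y_4 + 4y_5(y_3 - y_1).
\end{equation*}
This covers the subcase $y_3 \ge y_1$. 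The remaining subcase $y_5 < y_4$, $y_3 < y_1$ should be handled by rotating the indices and applying these identities again; the rotated inequalities are expected to cover every ordering.

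Combining the two steps gives $q_{H_\Delta}(x) = \bm{y}^\top\bm{H}\bm{y} \ge 0$ for all $x\in\bbK$, that is, $H_\Delta\in\COP(\bbK)$. The only nontrivial ingredient is the copositivity of $\bm{H}$. The main risk is the bookkeeping in the case split if the proof is written out rather than cited, so citing~\cite{HN1963} is preferable. The argument does not use orthogonality of the $\delta_i$ or extremality; it uses only $\delta_i\in\bbK$.
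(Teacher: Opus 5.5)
Your proof is correct and follows the paper's argument: with $v_i=\delta_i\bullet x\ge 0$ by self-duality of $\bbK$, you write $q_{H_\Delta}(x)=\bm{v}_{\Delta}(x)^\top\bm{H}\bm{v}_{\Delta}(x)$ and then use $\bm{H}\in\COP^5$. Your optional self-contained sketch has an error: the second identity is false (at $\bm{y}=\bm{e}_3+\bm{e}_5$ the left side is $4$ and the right side is $8$), and the correct version is $\bm{y}^\top\bm{H}\bm{y}=(y_1-y_2+y_3-y_4+y_5)^2+4y_2y_5+4y_1(y_4-y_5)$, which together with your first identity covers both $y_5\ge y_4$ and $y_4\ge y_5$, so no index rotation is needed.
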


\begin{proof}
Let $x\in \bbK$.
Then the vector
\begin{equation}
\bm{v}_{\Delta}(x) \coloneqq (\delta_1\bullet x,\delta_2\bullet x,\delta_3\bullet x,\delta_4\bullet x,\delta_5\bullet x)^\top \label{eq:vdelta}
\end{equation}
belongs to $\bbR_+^5$.
It follows from $\bm{H} \in \COP^5$ that
\begin{equation*}
q_{H_{\Delta}}(x) = \bm{v}_{\Delta}(x)^\top\bm{H}\bm{v}_{\Delta}(x) \ge 0. 
\end{equation*}
Thus, we obtain $H_{\Delta}\in \COP(\bbK)$.
\end{proof}

In the subsequent proofs, we use a matrix representation of elements in $\calS(\bbE)$ associated with a Peirce decomposition.
Let $r$ denote the rank of $\bbE$ and $\{c_1,\dots,c_r\}$ be a Jordan frame of $\bbE$.
As shown in \eqref{eq:Peirce}, we can obtain the Peirce decomposition of $\bbE$ with respect to the Jordan frame.
We define a lexicographic-type order $\preceq$ on the set $\{(i,j) \mid 1\le i \le j\}$ by comparing the second component first and then the first; that is, $(i,j) \preceq (k,l)$ if and only if either $j < l$, or $j = l $ and $i\le k$.
For notational convenience, we write $(ij,kl)$ instead of $((i,j),(k,l))$ when no ambiguity can arise.
For a positive integer $m$, we define
\begin{equation*}
\llbracket m\rrbracket \coloneqq \{(ij,kl) \mid 1\le i\le j\le m,\ 1\le k\le l\le m,\ 11 \preceq ij \preceq kl \preceq mm\}.
\end{equation*}
Then every $A\in \calS(\bbE)$ can be represented as $A = (A_{ij,kl})_{(ij,kl)\in \llbracket r\rrbracket}$, where $A_{ij,kl} = P_{\bbE_{ij}} A|_{\bbE_{kl}}$ is the $(ij,kl)$th element of $A$.

In accordance with the matrix representation, for a set $C = \{c_1,\dots,c_5\}$ of orthogonal primitive idempotents in $\bbE$, by complementing the set $C$ so that $\{c_1,\dots,c_r\}$ is a Jordan frame of $\bbE$ if necessary, we can represent $H_C$ as
\begin{equation*}
H_C = \bordermatrix{
& 11 & 22 & 33 & 44 & 55 \cr
& c_1^{\otimes 2} & -c_1\otimes c_2 & c_1\otimes c_3 & c_1\otimes c_4 & -c_1\otimes c_5 \cr
&  & c_2^{\otimes 2} & -c_2\otimes c_3 & c_2\otimes c_4 & c_2\otimes c_5 \cr
&  &  & c_3^{\otimes 2} & -c_3\otimes c_4 & c_3\otimes c_5 \cr
&  &  &  & c_4^{\otimes 2} & -c_4\otimes c_5 \cr
&  &  &  & & c_5^{\otimes 2}}.
\end{equation*}
Note that such complements $c_6,\dots,c_r$ to $C$ always exist; see \cite[page~6]{NL2025}.
\begin{lemma}\label{lem:Horn_exposed_rank5}
Suppose that the rank of the Euclidean Jordan algebra $\bbE$ is $5$.
For a Jordan frame $C = \{c_1,\dots,c_5\}$ of $\bbE$, the Horn transformation $H_C$ generates an exposed ray of $\COP(\bbE_+)$.
\end{lemma}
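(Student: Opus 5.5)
The plan is to exhibit an explicit exposing element $D \in \CP(\bbE_+)$, which is the dual cone of $\COP(\bbE_+)$, and to show that $\COP(\bbE_+)\cap\{D\}^\perp = \bbR_+ H_C$. Copositivity of $H_C$ is Lemma~\ref{lem:Horn_cop}, so only the face needs to be identified. I will take $D$ to be a finite sum $\sum_k x_k^{\otimes 2}$ with each $x_k \in \bbE_+$ a zero of $q_{H_C}$ on $\bbE_+$. Then $\langle H_C, D\rangle = \sum_k q_{H_C}(x_k) = 0$.

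Since $q_{H_C}(x) = \bm{v}_C(x)^\top \bm{H}\bm{v}_C(x)$, every $x\in\bbE_+$ with $\bm{v}_C(x)$ a zero of $\bm{H}$ is a zero of $q_{H_C}$. The zeros of $\bm{H}$ in $\bbR_+^5$ are, up to scaling, the cyclic shifts of $(a,a+b,b,0,0)^\top$ with $a,b\ge 0$. This allows three families of zeros $x_k$:
\begin{itemize}
\item diagonal points $\sum_i y_i c_i$ with $\bm{y}$ a Horn zero;
\item points $c_i + c_{i\plmod1} + w$ with $w\in\bbE_{i,i\plmod1}$ small, which lie in $\bbE_+$ because $\alpha c_i+\beta c_j + w \in \bbE_+$ whenever $\alpha\beta$ dominates $\norm{w}^2$ (cf.\ \eqref{eq:x^2}, \eqref{eq:ci_ci_xij2});
\item points $c_i+2c_{i\plmod1}+c_{i\plmod2}+w$ with $w$ small in $\bbE_{i,i\plmod1}\oplus\bbE_{i\plmod1,i\plmod2}\oplus\bbE_{i,i\plmod2}$.
\end{itemize}
In the last two families, $w$ ranges over a finite set of small vectors $\pm\epsilon u$, with $u$ running through bases of the relevant Peirce spaces. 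Adding Peirce components $w$ does not change $\bm{v}_C$, because each $\bbE_{ij}$ with $i<j$ is orthogonal to every $c_l$. Hence these points are still zeros of $q_{H_C}$.

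Now let $A\in\COP(\bbE_+)$ with $\langle A,D\rangle=0$. Then $q_A(x_k)=0$ for every $k$.

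First I handle the diagonal slice. The principal part $\bigl(c_i\bullet A(c_j)\bigr)_{i,j}$ is a copositive $5\times5$ matrix (Lemma~\ref{lem:COPK_slice} together with Lemma~\ref{lem:subtf_cop}), and it vanishes on the Horn zeros $\bm{y}$ used in $D$. If enough of these zeros are included (for example, the five $(1,1,0,0,0)$-type shifts and the five $(1,2,1,0,0)$-type shifts), this forces the matrix to equal $\lambda\bm{H}$ with $\lambda\ge0$. This is exactly the exposedness of $\bm{H}$ in $\COP^5$ (\cite{HN1963,Dickinson2011}), whose proof uses the same zeros. The corresponding blocks $A_{ii,jj}$ therefore agree with those of $\lambda H_C$.

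Next I handle the Peirce blocks. Each $x_k$ is a minimizer of $q_A$ over $\bbE_+$, so the first-order conditions give $A(x_k)\in\bbE_+$ and $x_k\bullet A(x_k)=0$. For the points $c_i+c_{i\plmod1}+\epsilon u$, the quadratic $t\mapsto q_A(c_i+c_{i\plmod1}+tu)$ is nonnegative near $t=0$ and vanishes at $t=0,\pm\epsilon$. A quadratic in one variable that vanishes at three points is identically zero. This gives $u\bullet A(u)=0$ and $(c_i+c_{i\plmod1})\bullet A(u)=0$ for every basis vector $u$.

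I then have to upgrade these diagonal-in-$u$ conditions to the full statement that the blocks $A_{i\,i\plmod1,\,i\,i\plmod1}$ and $A_{ll,\,i\,i\plmod1}$ vanish. Two tools do this. One is polarization, using zeros $c_i+c_{i\plmod1}+\epsilon(u+u')$, which I also include in $D$. The other is the condition $A(x)\in\bbE_+$ with $x\bullet A(x)=0$. This condition places $A(x)$ in the face of $\bbE_+$ complementary to $x$, and for $x$ in the interior of $\bbE_+\cap\bbE(c_i+c_{i\plmod1},1)$ that face is $\bbE_+\cap\bbE(e-c_i-c_{i\plmod1},1)$, where $e$ is the unit element. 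Consequently $P_{\bbE(c_i+c_{i\plmod1},1)}A(x)=0$. Combined with \eqref{eq:Peirce_truncated} and the already-known diagonal blocks, this kills the mixed blocks and also the blocks pairing $\bbE_{i,i\plmod1}$ with other Peirce spaces. The third family treats the non-consecutive pairs $\bbE_{i,i\plmod2}$ in the same way via $\bbE(c_i+c_{i\plmod1}+c_{i\plmod2},1)$. The cyclic structure covers all ten pairs $\{i,j\}$, since every pair is either consecutive or at distance $2$ modulo $5$. Lemma~\ref{lem:submatrix_zero} then gives $A=\lambda H_C$, since $H_C$ itself has zero blocks in every position involving some $\bbE_{ij}$ with $i<j$.

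The main obstacle is this Peirce-block step. The conditions coming from zeros restricted to the subalgebras $\bbE(c_i+c_j,1)$ and $\bbE(c_i+c_j+c_l,1)$ must account for every block $A_{ij,kl}$, including those pairing two off-diagonal Peirce spaces with no common index, such as $\bbE_{12}$ with $\bbE_{34}$. For these I plan to use the face condition $A(x)\in\bbE_+\cap\{x\}^\perp$ rather than second-order information. Note that $\bbE_{34}$ does lie in $\bbE(e-c_1-c_2,1)$, so that condition alone does not make $P_{\bbE_{34}}A(x)$ vanish for $x\in\bbE(c_1+c_2,1)$. I therefore expect to need an extra second-order or perturbation argument, using zeros whose $\bm{v}_C$ has support meeting both index pairs, such as the points of the third family.
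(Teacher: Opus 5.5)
\textbf{Gap: the blocks outside $\calJ$ are never handled.}
As written, your argument only reaches $A_{ij,kl}=\lambda (H_C)_{ij,kl}$ for $(ij,kl)\in\calJ$. The $5\times5$ slice argument gives the blocks between diagonal Peirce spaces. The identity $P_{\bbE(c,1)}A(x)=0$ on your third family gives the blocks inside each $\bbE(c_i+c_{i\plmod1}+c_{i\plmod2},1)$. This is exactly where the paper stands after invoking \cite[Proposition~5.2]{NL2026}.

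Your claim that this also ``kills the blocks pairing $\bbE_{i,i\plmod1}$ with other Peirce spaces'' is not justified. The identity $P_{\bbE(c,1)}A(x)=0$ says nothing about the components of $A(w)$ outside $\bbE(c,1)$. Even the full face condition $A(x)\in\bbE_+\cap\{x\}^\perp=\bbE(c,0)_+$ leaves every component in $\bbE(c,0)$ free, for instance $A_{kk,\,i\,i\plmod1}$ with $k\notin\{i,i\plmod1\}$. So all blocks outside $\calJ$ remain open. These are the paper's Cases~a--g, e.g.\ $A_{11,24}$, $A_{12,44}$, $A_{12,34}$, $A_{12,35}$, $A_{13,45}$, and they are the real content of the lemma. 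The paper kills them one by one by inserting explicit points $x(\epsilon)\in\bbE_+$, at which $q_{aH}$ vanishes to high order, into $q_A\ge0$. You only announce that ``an extra second-order or perturbation argument'' will be needed, so the proof is incomplete at its central step.

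\textbf{How to close it within your framework.}
Your own setup closes the gap at first order, provided you use the full complementary face. Let $x_0$ be the $(1,2,1)$-type diagonal zero on $\{i,i\plmod1,i\plmod2\}$, and let $s_i=c_i+c_{i\plmod1}+c_{i\plmod2}$. Then $A(x_0)$ and $A(x_0\pm\epsilon u)$ lie in $\bbE(s_i,0)$, so $A(u)\in\bbE(s_i,0)$ for every perturbation direction $u$ of your third family.
\begin{enumerate}[(i)]
\item This already kills $A_{12,34}$ via the triple $\{1,2,3\}$, since $\bbE_{34}\subseteq\bbE(s_1,\frac12)$. Your worry about $\bbE(e-c_1-c_2,1)$ concerns only the second family.
\item Each adjacent pair $\{i,i\plmod1\}$ lies in the two triples starting at $i\mnmod1$ and at $i$, and both perturb in $\bbE_{i,i\plmod1}$. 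Intersecting gives $A(u)\in\bbR c_{i\plmod3}$ for $u\in\bbE_{i,i\plmod1}$.
\item For $u\in\bbE_{i,i\plmod2}$ you get $A(u)\in\bbE(c_{i\plmod3}+c_{i\plmod4},1)$. Its $\bbE_{i\plmod3,i\plmod4}$-component vanishes by self-adjointness and (ii).
\item The remaining scalar blocks follow from the face condition at \emph{unperturbed} diagonal zeros whose support meets both index sets. For example, $P_{\bbE_{12}}A(x)=0$ at the $(1,2,1)$-zero on $\{4,5,1\}$ yields $A_{12,44}=0$. Similarly, $P_{\bbE_{13}}A(x)=0$ at the $(1,1)$-zero on $\{3,4\}$ yields $A_{13,44}=0$.
\end{enumerate}
Completed this way, your route would differ genuinely from the paper's. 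It is self-contained: an explicit $D$ plus first-order optimality on its support, with no appeal to \cite{NL2026} and no $\epsilon$-expansions.

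\textbf{A smaller fix.}
The points $c_i+c_{i\plmod1}$ and $c_i+2c_{i\plmod1}+c_{i\plmod2}$ are zeros of $q_{H_C}$ only if the norms $\norm{c_i}$ coincide, which can fail in non-simple algebras. Use $c_i/\norm{c_i}^2$ instead, which is the paper's $t_i^2c_i$.
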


\begin{proof}
We consider the Peirce decomposition~\eqref{eq:Peirce} of $\bbE$ with respect to the Jordan frame $C$.
For simplicity, write $H$ for $H_C$.
In this proof, we introduce the following notation.
For $(i,j) $ with $1\le j < i \le 5$, we define
\begin{equation}
(i,j) \coloneqq (j,i). \label{eq:ij_notation}
\end{equation}
This notation is justified since $\bbE_{ij} = \bbE_{ji}$ holds by definition.
In addition, for $(kl,ij)\in \llbracket 5\rrbracket$ with $kl \neq ij$, we define
\begin{equation}
(ij,kl) \coloneqq (kl,ij). \label{eq:ijkl_notation}
\end{equation}
This notation is also justified since we work with self-adjoint linear transformations here.

For each $i = 1,\dots,5$, we let $s_i \coloneqq c_i + c_{i\plmod 1} + c_{i \plmod 2}$ and $I_i \coloneqq \{(i,i),(i,i\plmod 1),(i\plmod 1,i\plmod 1),(i,i\plmod 2),(i\plmod 1,i\plmod 2),(i\plmod 2,i\plmod 2)\}$, where we use the notation introduced in \eqref{eq:ij_notation}.
It follows from \eqref{eq:Peirce_truncated} that
\begin{equation}
\bbE(s_i,1) = \bigoplus_{(k,l) \in I_i}\bbE_{kl}. \label{eq:Esi1}
\end{equation}
In addition, we define $H_i \coloneqq P_{\bbE(s_i,1)}H|_{\bbE(s_i,1)}$.
The mapping $H_i$ is the principal subtransformation of $H$ obtained by extracting the rows and columns indexed by the elements in $I_i$,
and $H_i = (c_i - c_{i\plmod 1} + c_{i\plmod 2})^{\otimes 2}$ holds.
Since
\begin{equation*}
c_i - c_{i\plmod 1} + c_{i\plmod 2} \in \bbE(s_i,1) \setminus (\bbE(s_i,1)_+ \cup (-\bbE(s_i,1)_+))
\end{equation*}
and the cone $\bbE(s_i,1)_+$ is symmetric in the space $\bbE(s_i,1)$, it follows from \cite[Proposition~5.2]{NL2026} that $\bbR_+H_i$ is an exposed ray of $\COP(\bbE(s_i,1)_+;\bbE(s_i,1))$.
This means that there exists $D_i \in \CP(\bbE(s_i,1)_+)$ such that
\begin{equation}
\bbR_+H_i = \COP(\bbE(s_i,1)_+) \cap \{D_i\}^\perp. \label{eq:Hi_exposed}
\end{equation}
We define $D \coloneqq \sum_{i=1}^5 D_i \in \CP(\bbE_+)$.
In what follows, we show that $\bbR_+H = \COP(\bbE_+) \cap \{D\}^\perp$ to prove that $\bbR_+ H$ is an exposed ray of $\COP(\bbE_+)$.

We have shown that $H \in \COP(\bbE_+)$ in Lemma~\ref{lem:Horn_cop}.
In addition, $H \in \{D\}^\perp$ holds since
\begin{equation*}
\langle H,D\rangle = \sum_{i=1}^5 \langle H,D_i\rangle = \sum_{i=1}^5\langle H_i,D_i\rangle = 0,
\end{equation*}
where we use Lemma~\ref{lem:A_dot_B_reduced} to derive the second equality and use \eqref{eq:Hi_exposed} to derive the last equality.
Therefore, we have $H \in \COP(\bbE_+) \cap \{D\}^\perp$.

Conversely, we let $A \in \COP(\bbE_+) \cap \{D\}^\perp$.
For each $i = 1,\dots,5$, we define $A_i \coloneqq P_{\bbE(s_i,1)}A|_{\bbE(s_i,1)}$.
Then each $A_i$ belongs to $\COP(\bbE(s_i,1)_+)$ by Lemma~\ref{lem:subtf_cop} and it follows from Lemma~\ref{lem:A_dot_B_reduced} that
\begin{equation*}
0 = \langle A, D\rangle = \sum_{i=1}^5\langle A_i, D_i\rangle.
\end{equation*}
For every $i = 1,\dots,5$, the value $\langle A_i, D_i\rangle$ is nonnegative, so it must be $0$.
Combining this with \eqref{eq:Hi_exposed}, we see that there exists $a_i \in \bbR_+$ such that $A_i = a_i H_i$.
For each $i = 1,\dots,5$ and $j = 0,1,2$, it follows from $c_{i\plmod j} \in \bbE(s_i,1)$ that
\begin{equation*}
c_{i\plmod j} \bullet A(c_{i\plmod j}) = c_{i\plmod j} \bullet A_i(c_{i\plmod j}) = c_{i\plmod j} \bullet (a_i H_i)(c_{i\plmod j}) = a_i \norm{c_{i\plmod j}}^4.
\end{equation*}
In particular, for any $i = 1,\dots,5$ and $j = 0,1,2$ such that $i\plmod j$ takes the same value, $c_{i\plmod j} \bullet A(c_{i\plmod j})$ also takes the same value, so the following equations hold for any $i = 1,\dots,5$:
\begin{equation*}
c_i\bullet A(c_i) = a_i \norm{c_i}^4 = a_{i\plmod 4} \norm{c_i}^4 = a_{i\plmod 3} \norm{c_i}^4.
\end{equation*}
This implies that $a_1 = a_2 = a_3 = a_4 = a_5$, which we write as $a \in \bbR_+$.
Therefore, it follows from \eqref{enum:element_equal} of Lemma~\ref{lem:submatrix_zero} and \eqref{eq:Esi1} that
\begin{equation}
A_{ij,kl} = aH_{ij,kl} \text{ for all $(ij,kl) \in \calJ \coloneqq \bigcup_{p=1}^5 \{(ij,kl) \in I_p\times I_p \mid ij \preceq kl\}$}. \label{eq:A_eq_aH_on_size3block}
\end{equation}

For every $(ij,kl) \in \llbracket 5\rrbracket \setminus \calJ$, the $(ij,kl)$th element of $H$ is $0$.
Therefore, we would like to see that $A_{ij,kl} = 0$ for all $(ij,kl) \in \llbracket 5\rrbracket \setminus \calJ$ to show that $A = aH$.
The indices $(ij,kl)$ in $\llbracket 5\rrbracket \setminus \calJ$ can be divided into the following seven cases:
\begin{enumerate}[C{a}se~a:]
\item $(ij,kl) = (11,24)$, $(22,35)$, $(33,14)$, $(44,25)$, $(13,55)$, $(11,35)$, $(22,14)$, $(33,25)$, $(13,44)$, and $(24,55)$;\label{enum:case_a}
\item $(ij,kl) = (11,34)$, $(22,45)$, $(33,15)$, $(12,44)$, and $(23,55)$;\label{enum:case_b}
\item $(ij,kl) = (12,24)$, $(23,35)$, $(14,34)$, $(25,45)$, $(13,15)$, $(15,35)$, $(12,14)$, $(23,25)$, $(13,34)$, and $(24,45)$;\label{enum:case_c}
\item $(ij,kl) = (13,14)$, $(24,25)$, $(13,35)$, $(14,24)$, and $(25,35)$;\label{enum:case_d}
\item $(ij,kl) = (12,34)$, $(23,45)$, $(34,15)$, $(12,45)$, and $(23,15)$;\label{enum:case_e}
\item $(ij,kl) = (12,35)$, $(23,14)$, $(34,25)$, $(13,45)$, and $(24,15)$;\label{enum:case_f}
\item $(ij,kl) = (13,24)$, $(24,35)$, $(14,35)$, $(14,25)$, and $(13,25)$. \label{enum:case_g}
\end{enumerate}
\begin{figure}
\begin{center}
\includegraphics{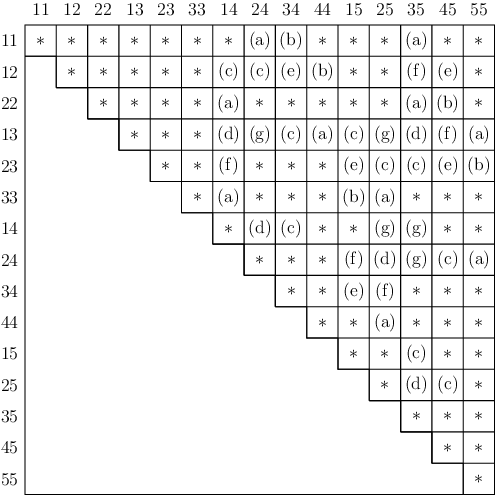}
\end{center}
\caption{Illustration of the seven cases from Case~\ref{enum:case_a} to Case~\ref{enum:case_g}.
The cells marked with $*$ correspond to the indices $(ij,kl)$ in $\calJ$, for which $A_{ij,kl} = aH_{ij,kl}$ holds as shown in \eqref{eq:A_eq_aH_on_size3block}.}
\label{fig:Horn_exposed_rank_5}
\end{figure}
See also Figure~\ref{fig:Horn_exposed_rank_5} for the above case distinction.
For simplicity, we define $t_i \coloneqq 1/\norm{c_i}$ for each $i = 1,\dots,5$.
Analogously to $\plmod$, define $\mnmod$ to be the subtraction modulo $5$ on $\{1,\dots,5\}$ (e.g., $1\mnmod 2 = 4$).

We recall the formulae in \eqref{eq:x^2} and \eqref{eq:ci_ci_xij2}, which are used in the following calculations.
In addition, we adopt the following notation.
In the matrix representation of a linear transformation,
the element marked with the symbol $(\bigstar)$ is $0$ because of Case~$\bigstar$.
For example, in \eqref{eq:case3_qf_matrix}, the $(11,24)$th and $(12,44)$th elements are $0$ because of Case~\ref{enum:case_a} and Case~\ref{enum:case_b}, respectively.

\noindent
\fbox{Case~\ref{enum:case_a}}
Under the notation introduced in \eqref{eq:ij_notation} and \eqref{eq:ijkl_notation}, the indices in $\llbracket 5\rrbracket \setminus \calJ$ that fall into this case can be summarized as $((i,i),(i\plmod 1,i\plmod 3))$ and $((i,i),(i\mnmod 1,i\mnmod 3))$ for $i = 1,\dots,5$.
By symmetry, showing $A_{11,24} = 0$ is sufficient.
For any $x_{24} \in \bbE_{24}$ and $\epsilon > 0$, let
\begin{align*}
x(\epsilon) &\coloneqq t_1^2c_1 + (t_2c_2 \pm \epsilon x_{24})^2 \\
&= \underbrace{t_1^2c_1}_{\in \bbE_{11}} + \underbrace{t_2^2c_2 + \epsilon^2c_2\circ x_{24}^2}_{\in \bbE_{22}} \pm \underbrace{\epsilon t_2x_{24}}_{\in \bbE_{24}} + \underbrace{\epsilon^2 c_4 \circ x_{24}^2}_{\in \bbE_{44}} \in \bbE_+.
\end{align*}
Then it follows from $A\in \COP(\bbE_+)$ that
\begin{align}
0 &\le q_A(x(\epsilon)) \nonumber\\
&= x(\epsilon)\bullet\!\!\!\bordermatrix{
& 11 & 22 & 24 & 44 \cr
& ac_1^{\otimes 2} & -ac_1\otimes c_2 & A_{11,24} & ac_1\otimes c_4 \cr
&  & ac_2^{\otimes 2} & 0 & ac_2\otimes c_4 \cr
&  &  & 0 & 0 \cr
&  &  &   & ac_4^{\otimes 2}}(x(\epsilon)) \nonumber\\
&= \pm 2\epsilon t_1^2t_2 c_1\bullet A_{11,24}(x_{24}) + O(\epsilon^2). \label{eq:case1_qf}
\end{align}
Dividing \eqref{eq:case1_qf} by $2\epsilon t_1^2t_2$ and letting $\epsilon \downarrow 0$, we have $c_1\bullet A_{11,24}(x_{24}) = 0$.
Since $\bbE_{11} = \bbR c_1$ and $x_{24} \in \bbE_{24}$ is arbitrary, by \eqref{enum:element_0} of Lemma~\ref{lem:submatrix_zero}, we obtain $A_{11,24} = 0$.

\noindent
\fbox{Case~\ref{enum:case_b}}
The indices in $\llbracket 5\rrbracket \setminus \calJ$ that fall into this case can be summarized as $((i,i),(i\plmod 2, i\plmod 3))$ for $i = 1,\dots,5$.
By symmetry, showing $A_{11,34} = 0$ is sufficient.
For any $x_{34} \in \bbE_{34}$ and $\epsilon > 0$, let
\begin{align*}
x(\epsilon) &\coloneqq t_1^2c_1 + 2t_2^2c_2 + (t_3 c_3 \pm \epsilon x_{34})^2\\
&= \underbrace{t_1^2c_1}_{\in \bbE_{11}} + \underbrace{2t_2^2c_2}_{\in \bbE_{22}} + \underbrace{t_3^2c_3 + \epsilon^2 c_3 \circ x_{34}^2}_{\in \bbE_{33}} \pm \underbrace{\epsilon t_3 x_{34}}_{\in \bbE_{34}} + \underbrace{\epsilon^2 c_4 \circ x_{34}^2}_{\in \bbE_{44}} \in \bbE_+.
\end{align*}
Then it follows that
\begin{align}
0 &\le  q_A(x(\epsilon)) \nonumber\\
&= x(\epsilon)\bullet\!\!\!\bordermatrix{
& 11 & 22 & 33 & 34 & 44 \cr
& ac_1^{\otimes 2} & -ac_1\otimes c_2 & ac_1\otimes c_3 &A_{11,34} & ac_1\otimes c_4 \cr
&  & ac_2^{\otimes 2} & -ac_2\otimes c_3 & 0 & ac_2\otimes c_4 \cr
&  &  & ac_3^{\otimes 2} & 0 & -ac_3\otimes c_4 \cr
&  &  &  & 0 & 0 \cr
&  &  &  & & ac_4^{\otimes 2}}(x(\epsilon)) \nonumber\\
&= \pm 2\epsilon t_1^2t_3 c_1\bullet A_{11,34}(x_{34}) + O(\epsilon^2). \label{eq:case2_qf}
\end{align}
Dividing \eqref{eq:case2_qf} by $2\epsilon t_1^2t_3$ and letting $\epsilon \downarrow 0$, we have $c_1\bullet A_{11,34}(x_{34}) = 0$.
Since $x_{34} \in \bbE_{34}$ is arbitrary, we obtain $A_{11,34} = 0$.

\noindent
\fbox{Case~\ref{enum:case_c}}
The indices in $\llbracket 5\rrbracket \setminus \calJ$ that fall into this case can be summarized as $((i,i\plmod 1),(i\plmod 1,i\plmod 3))$ and $((i,i\mnmod 1),(i\mnmod 1,i\mnmod 3))$ for $i = 1,\dots,5$.
By symmetry, showing $A_{12,24} = 0$ is sufficient.
For any $x_{12}\in \bbE_{12}$, $x_{24} \in \bbE_{24}$, and $\epsilon >  0$, let
\begin{align*}
x(\epsilon) &\coloneqq 2t_1^2c_1 + (t_1c_1 + x_{12} + t_2c_2)^2 + 2(t_2c_2 \pm \epsilon x_{24})^2\\
&= \underbrace{3t_1^2c_1 + c_1\circ x_{12}^2}_{\in \bbE_{11}} + \underbrace{(t_1 + t_2)x_{12}}_{\in \bbE_{12}} \\
&\quad + \underbrace{3t_2^2c_2 + c_2\circ x_{12}^2 + 2\epsilon^2 c_2 \circ x_{24}^2}_{\in \bbE_{22}} \pm \underbrace{2\epsilon t_2x_{24}}_{\in \bbE_{24}} + \underbrace{2\epsilon^2 c_4 \circ x_{24}^2}_{\in \bbE_{44}} \in \bbE_+.
\end{align*}
Then it follows that
\begin{align}
0 &\le q_A(x(\epsilon)) \nonumber\\
&= x(\epsilon)\bullet\!\!\!\bordermatrix{
& 11 & 12 & 22 & 24 & 44 \cr
& ac_1^{\otimes 2} & 0 & -ac_1\otimes c_2  & \eqref{enum:case_a} & ac_1\otimes c_4 \cr
&  & 0 & 0 & A_{12,24} & \eqref{enum:case_b} \cr
&  &  & ac_2^{\otimes 2} & 0 & ac_2\otimes c_4 \cr
&  &  &  & 0 & 0 \cr
&  &  &  & & ac_4^{\otimes 2}}(x(\epsilon)) \label{eq:case3_qf_matrix}\\
&= \pm 4\epsilon (t_1 + t_2)t_2 x_{12}\bullet A_{12,24}(x_{24}) + O(\epsilon^2). \label{eq:case3_qf}
\end{align}
Dividing \eqref{eq:case3_qf} by $4\epsilon (t_1 + t_2)t_2$ and letting $\epsilon \downarrow 0$, we have $x_{12}\bullet A_{12,24}(x_{24}) = 0$.
Since $x_{12}\in \bbE_{12}$ and $x_{24} \in \bbE_{24}$ are arbitrary, we obtain $A_{12,24} = 0$.

\noindent
\fbox{Case~\ref{enum:case_d}}
The indices in $\llbracket 5\rrbracket \setminus \calJ$ that fall into this case can be summarized as $((i,i\plmod 2),(i,i\plmod 3))$ for $i = 1,\dots,5$.
By symmetry, showing $A_{13,14} = 0$ is sufficient.
For any $x_{13}\in \bbE_{13}$, $x_{14} \in \bbE_{14}$, and $\epsilon >  0$, let
\begin{align*}
x(\epsilon) &\coloneqq (6 + \norm{x_{13}}^2)t_2^2c_2 + 2t_3^2c_3 + (t_1c_1 + x_{13} + t_3c_3)^2 + 2(t_1c_1 \pm \epsilon x_{14})^2 \\
&= \underbrace{3t_1^2c_1 + c_1\circ x_{13}^2 + 2\epsilon^2 c_1\circ x_{14}^2}_{\in \bbE_{11}} + \underbrace{(6 + \norm{x_{13}}^2)t_2^2c_2}_{\in \bbE_{22}} \\
&\quad + \underbrace{(t_1 + t_3)x_{13}}_{\in \bbE_{13}} + \underbrace{3t_3^2c_3 + c_3\circ x_{13}^2}_{\in \bbE_{33}} \pm \underbrace{2\epsilon t_1x_{14}}_{\in \bbE_{14}} + \underbrace{2\epsilon^2 c_4\circ x_{14}^2}_{\in \bbE_{44}} \in \bbE_+.
\end{align*}
Then it follows that
\begin{align}
0 &\le q_A(x(\epsilon)) \nonumber\\
&= x(\epsilon)\bullet\!\!\!\bordermatrix{
& 11 & 22 & 13 & 33 & 14 & 44 \cr
& ac_1^{\otimes 2} & -ac_1\otimes c_2 & 0 & ac_1\otimes c_3 & 0 & ac_1\otimes c_4 \cr
&  & ac_2^{\otimes 2} & 0 & -ac_2\otimes c_3 & \eqref{enum:case_a} & ac_2\otimes c_4 \cr
&  &  & 0 & 0 & A_{13,14} & \eqref{enum:case_a} \cr
&  &  &  & ac_3^{\otimes 2} & \eqref{enum:case_a} & -ac_3\otimes c_4 \cr
&  &  &  & & 0 & 0 \cr
&  &  &  & & & ac_4^{\otimes 2}}(x(\epsilon)) \nonumber\\
&= \pm 4\epsilon (t_1 + t_3)t_1 x_{13}\bullet A_{13,14}(x_{14}) + O(\epsilon^2). \label{eq:case4_qf}
\end{align}
Dividing \eqref{eq:case4_qf} by $4\epsilon (t_1 + t_3)t_1$ and letting $\epsilon \downarrow 0$, we have $x_{13}\bullet A_{13,14}(x_{14}) = 0$.
Since $x_{13}\in \bbE_{13}$ and $x_{14} \in \bbE_{14}$ are arbitrary, we obtain $A_{13,14} = 0$.

\noindent
\fbox{Case~\ref{enum:case_e}}
The indices in $\llbracket 5\rrbracket \setminus \calJ$ that fall into this case can be summarized as $((i,i\plmod 1),(i\plmod 2, i\plmod 3))$ for $i = 1,\dots,5$.
By symmetry, showing $A_{12,34} = 0$ is sufficient.
For any $x_{12}\in \bbE_{12}$, $x_{34} \in \bbE_{34}$, and $\epsilon >  0$, let
\begin{align*}
x(\epsilon) &\coloneqq (t_2c_2 \pm \epsilon x_{12})^2 + (t_3c_3 + \epsilon x_{34})^2 \\
&= \underbrace{\epsilon^2 c_1 \circ x_{12}^2}_{\in \bbE_{11}} \pm \underbrace{\epsilon t_2x_{12}}_{\in \bbE_{12}} + \underbrace{t_2^2c_2 + \epsilon^2c_2\circ x_{12}^2}_{\in \bbE_{22}}\\
&\quad + \underbrace{t_3^2c_3 + \epsilon^2c_3 \circ x_{34}^2}_{\in \bbE_{33}} + \underbrace{\epsilon t_3x_{34}}_{\in \bbE_{34}} + \underbrace{\epsilon^2 c_4\circ x_{34}^2}_{\in \bbE_{44}} \in \bbE_+.
\end{align*}
Then it follows that
\begin{align}
0 &\le q_A(x(\epsilon)) \nonumber\\
&= x(\epsilon)\bullet\!\!\!\bordermatrix{
& 11 & 12 & 22 & 33 & 34 & 44 \cr
& ac_1^{\otimes 2} & 0 & -ac_1\otimes c_2 & ac_1\otimes c_3 & \eqref{enum:case_b} & ac_1\otimes c_4 \cr
&  & 0 & 0 & 0 & A_{12,34} & \eqref{enum:case_b} \cr
&  &  & ac_2^{\otimes 2} & -ac_2\otimes c_3 & 0 & ac_2\otimes c_4 \cr
&  &  &  & ac_3^{\otimes 2} & 0 & -ac_3\otimes c_4 \cr
&  &  &  & & 0 & 0 \cr
&  &  &  & & & ac_4^{\otimes 2}}(x(\epsilon)) \nonumber\\
&= \pm 2\epsilon^2 t_2t_3 x_{12}\bullet A_{12,34}(x_{34}) + O(\epsilon^4). \label{eq:case5_qf}
\end{align}
Dividing \eqref{eq:case5_qf} by $2\epsilon^2 t_2t_3$ and letting $\epsilon \downarrow 0$, we have $x_{12}\bullet A_{12,34}(x_{34}) = 0$.
Since $x_{12}\in \bbE_{12}$ and $x_{34} \in \bbE_{34}$ are arbitrary, we obtain $A_{12,34} = 0$.

\noindent
\fbox{Case~\ref{enum:case_f}}
The indices in $\llbracket 5\rrbracket \setminus \calJ$ that fall into this case can be summarized as $((i,i\plmod 1),(i\plmod 2,i\plmod 4))$ for $i = 1,\dots,5$.
By symmetry, showing $A_{12,35} = 0$ is sufficient.
For any $x_{12}\in \bbE_{12}$, $x_{35} \in \bbE_{35}$, and $\epsilon >  0$, let
\begin{align*}
x(\epsilon) &\coloneqq (t_1c_1 \pm x_{12} + t_2c_2)^2 + \epsilon(t_3c_3 + x_{35} + t_5c_5)^2\\
&= \underbrace{t_1^2c_1 + c_1\circ x_{12}^2}_{\in \bbE_{11}} \pm \underbrace{(t_1 + t_2)x_{12}}_{\in \bbE_{12}} + \underbrace{t_2^2c_2 + c_2\circ x_{12}^2}_{\in \bbE_{22}} \\
&\quad + \underbrace{\epsilon t_3^2c_3 + \epsilon c_3\circ x_{35}^2}_{\in \bbE_{33}} + \underbrace{\epsilon(t_3 + t_5)x_{35}}_{\in \bbE_{35}} + \underbrace{\epsilon t_5^2c_5 + \epsilon c_5\circ x_{35}^2}_{\in \bbE_{55}} \in \bbE_+.
\end{align*}
Then it follows that
\begin{align}
0 &\le q_A(x(\epsilon)) \nonumber\\
&= x(\epsilon)\bullet\!\!\!\bordermatrix{
& 11 & 12 & 22 & 33 & 35 & 55 \cr
& ac_1^{\otimes 2} & 0 & -ac_1\otimes c_2 & ac_1\otimes c_3 & \eqref{enum:case_a} & -ac_1\otimes c_5 \cr
&  & 0 & 0 & 0 & A_{12,35} & 0 \cr
&  &  & ac_2^{\otimes 2} & -ac_2\otimes c_3 & \eqref{enum:case_a} & ac_2\otimes c_5 \cr
&  &  &  & ac_3^{\otimes 2} & 0 & ac_3\otimes c_5 \cr
&  &  &  & & 0 & 0 \cr
&  &  &  & & & ac_5^{\otimes 2}}(x(\epsilon)) \nonumber\\
&= \pm 2\epsilon(t_1+t_2)(t_3+t_5) x_{12}\bullet A_{12,35}(x_{35}) + O(\epsilon^2). \label{eq:case6_qf}
\end{align}
Dividing \eqref{eq:case6_qf} by $2\epsilon(t_1+t_2)(t_3+t_5)$ and letting $\epsilon \downarrow 0$, we have $x_{12}\bullet A_{12,35}(x_{35}) = 0$.
Since $x_{12}\in \bbE_{12}$ and $x_{35} \in \bbE_{35}$ are arbitrary, we obtain $A_{12,35} = 0$.

\noindent
\fbox{Case~\ref{enum:case_g}}
The indices in $\llbracket 5\rrbracket \setminus \calJ$ that fall into this case can be summarized as $((i,i\plmod 2),(i\plmod 1,i\plmod 3))$ for $i = 1,\dots,5$.
By symmetry, showing $A_{13,24} = 0$ is sufficient.
For any $x_{13}\in \bbE_{13}$, $x_{24} \in \bbE_{24}$, and $\epsilon >  0$, let
\begin{align*}
x(\epsilon) &\coloneqq (t_2c_2 + \epsilon x_{24})^2 + (t_3c_3 \pm \epsilon x_{13})^2\\
&= \underbrace{\epsilon^2 c_1\circ x_{13}^2}_{\in \bbE_{11}} + \underbrace{t_2^2c_2 + \epsilon^2 c_2\circ x_{24}^2}_{\in \bbE_{22}} \pm \underbrace{\epsilon t_3 x_{13}}_{\in \bbE_{13}}\\
&\quad + \underbrace{t_3^2c_3 + \epsilon^2c_3\circ x_{13}^2}_{\in \bbE_{33}} + \underbrace{\epsilon t_2x_{24}}_{\in \bbE_{24}} + \underbrace{\epsilon^2 c_4\circ x_{24}^2}_{\in \bbE_{44}} \in \bbE_+.
\end{align*}
Then it follows that
\begin{align}
0 &\le q_A(x(\epsilon)) \nonumber\\
&= x(\epsilon)\bullet\!\!\!\bordermatrix{
& 11 & 22 & 13 & 33 & 24 & 44 \cr
& ac_1^{\otimes 2} & -ac_1\otimes c_2 & 0 & ac_1\otimes c_3 & \eqref{enum:case_a} & ac_1\otimes c_4 \cr
&  & ac_2^{\otimes 2} & 0 & -ac_2\otimes c_3 & 0 & ac_2\otimes c_4 \cr
&  &  & 0 & 0 & A_{13,24} & \eqref{enum:case_a} \cr
&  &  &  & ac_3^{\otimes 2} & 0 & -ac_3\otimes c_4 \cr
&  &  &  & & 0 & 0 \cr
&  &  &  & & & ac_4^{\otimes 2}}(x(\epsilon)) \nonumber\\
&= \pm 2\epsilon^2 t_2t_3 x_{13}\bullet A_{13,24}(x_{24}) + O(\epsilon^4). \label{eq:case7_qf}
\end{align}
Dividing \eqref{eq:case7_qf} by $2\epsilon^2 t_2t_3$ and letting $\epsilon \downarrow 0$, we have $x_{13}\bullet A_{13,24}(x_{24}) = 0$.
Since $x_{13}\in \bbE_{13}$ and $x_{24} \in \bbE_{24}$ are arbitrary, we obtain $A_{13,24} = 0$.
\end{proof}

\begin{remark}
In Lemma~\ref{lem:Horn_exposed_rank5}, when the Euclidean Jordan algebra $\bbE$ is the Hadamard Euclidean Jordan algebra, no case distinction is required in the proof.
In this situation, $\bbE_{ij}$ is the set containing only zero for all $1 \le i < j \le 5$, and the proof of Lemma~\ref{lem:Horn_exposed_rank5} terminates at \eqref{eq:A_eq_aH_on_size3block}.
Moreover, the Jordan frame $C$ of the Hadamard Euclidean Jordan algebra is $\{\bm{e}_1,\dots,\bm{e}_5\}$ as noted in Section~\ref{subsec:EJA}, and the corresponding Horn transformation $H_C$ is essentially identical to the Horn matrix $\bm{H}$.
For this reason, proving Lemma~\ref{lem:Horn_exposed_rank5} is fundamentally more involved than proving that the Horn matrix generates an exposed ray of $\COP^5$.
\end{remark}

\begin{lemma}\label{lem:Horn_exposed}
Suppose that the rank of the Euclidean Jordan algebra $\bbE$ is at least $5$.
For a set $C = \{c_1,\dots,c_5\}$ of orthogonal primitive idempotents in $\bbE$, the Horn transformation $H_C$ generates an exposed ray of $\COP(\bbE_+)$.
\end{lemma}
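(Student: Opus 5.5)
We reduce to Lemma~\ref{lem:Horn_exposed_rank5} by passing to the rank-$5$ subalgebra determined by $C$, and then we kill all remaining blocks of a copositive transformation orthogonal to a suitable exposing element. Set $s \coloneqq c_1+\dots+c_5$ and $\bbU \coloneqq \bbE(s,1)$. This is a Euclidean Jordan subalgebra of rank $5$, with Jordan frame $C$ and $\bbU_+ = \bbE_+\cap \bbU$. Complete $C$ to a Jordan frame $\{c_1,\dots,c_r\}$ of $\bbE$. By \eqref{eq:Peirce_truncated}, $\bbU=\bigoplus_{1\le i\le j\le 5}\bbE_{ij}$. The Horn transformation $H_C$ is supported on $\bbU$, meaning $H_C = P_{\bbU}H_C|_{\bbU}$ extended by zero, and its restriction is the Horn transformation of $\bbU$. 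By Lemma~\ref{lem:Horn_exposed_rank5}, there is $D_0\in\CP(\bbU_+)$ with $\bbR_+ P_{\bbU}H_C|_{\bbU} = \COP(\bbU_+;\bbU)\cap\{D_0\}^\perp$.

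The exposing element will be $D \coloneqq D_0 + \sum_{k=6}^r\sum_{i=1}^5 (c_i+c_k)^{\otimes 2}$ plus possibly further rank-one terms $y^{\otimes 2}$ with $y\in\bbE_+$ designed to vanish against $H_C$. For the extra terms, note that $q_{H_C}(y)=\bm v_C(y)^\top \bm H \bm v_C(y)$. For $y=c_i+c_k$ with $k\ge 6$, we have $\bm v_C(y) = \norm{c_i}^2\bm e_i$, which gives $q_{H_C}(y)=\norm{c_i}^4 \neq 0$. That choice is therefore wrong. Instead, I would use elements $y$ whose vector $\bm v_C(y)$ is a zero of $\bm H$, for instance $y = c_i + c_{i\plmod 1} + c_k$, or squares $(t_i c_i + t_{i\plmod 1}c_{i\plmod 1} + x_{ik} + \dots)^2$. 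These give $\bm v_C(y)\propto \bm e_i+\bm e_{i\plmod1}$, on which the Horn form vanishes. Since $H_C\in\COP(\bbE_+)$, such $y$ satisfy $\langle H_C, y^{\otimes2}\rangle = q_{H_C}(y)=0$. Hence $H_C\in\COP(\bbE_+)\cap\{D\}^\perp$ by Lemma~\ref{lem:Horn_cop} and Lemma~\ref{lem:A_dot_B_reduced}.

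For the converse, let $A\in\COP(\bbE_+)\cap\{D\}^\perp$. Every summand pairs nonnegatively with $A$, so each pairing is zero. By Lemma~\ref{lem:subtf_cop} and Lemma~\ref{lem:A_dot_B_reduced}, $P_{\bbU}A|_{\bbU}\in\COP(\bbU_+;\bbU)$ is orthogonal to $D_0$, so $P_{\bbU}A|_{\bbU}=aP_{\bbU}H_C|_{\bbU}$ for some $a\ge0$. It remains to show $A_{ij,kl}=0$ whenever some index exceeds $5$, using Lemma~\ref{lem:submatrix_zero}. The tool is the first-order perturbation argument of the cases in Lemma~\ref{lem:Horn_exposed_rank5}. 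Take a point $x_0\in\bbE_+$ with $q_A(x_0)=0$, for instance $x_0=t_1^2c_1+t_2^2c_2$, whose vector $\bm v_C(x_0)$ is $\bm e_1+\bm e_2$, a zero of $\bm H$. Perturb it inside $\bbE_+$ as $x(\epsilon)=x_0+\epsilon z+O(\epsilon^2)$, for example $(t_1c_1\pm\epsilon x_{1k})^2 + t_2^2 c_2$. Copositivity forces the linear term $2\epsilon\, x_0\bullet A(z)$ to vanish, which kills the blocks between $\bbE_{11}$ or $\bbE_{22}$ and $\bbE_{1k}$. Varying the zero $x_0$ over the supports $\{i,i\plmod1\}$, and also using $x_0=c_k$-type directions, should eliminate the mixed blocks. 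The diagonal blocks $A_{kk,kk}$, $A_{ik,ik}$, etc.\ are not forced to vanish by zeros of $H_C$ alone. This is exactly why $D$ needs the extra terms: for instance $(c_i + c_{i\plmod1}+ c_k)^{\otimes2}$ controls $c_k\bullet A(c_k)$ once the cross terms are known. Squares $(t_ic_i+x_{ik}+t_kc_k)^2$-type elements, arranged with $\bm v_C$ a Horn zero, control the $\bbE_{ik}$ blocks; here one should exploit that a copositive $A$ with $q_A(y)=0$ at $y$ in the relative interior of a face forces $A(y)$ to vanish appropriately.

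The main obstacle is the choice of the additional exposing terms and the bookkeeping showing that all blocks involving indices $>5$ vanish, especially the diagonal blocks on $\bbE_{kk}$, $\bbE_{ik}$, and $\bbE_{kl}$ for $k,l\ge6$. For these, zeros of $q_{H_C}$ must be found whose faces span the relevant Peirce spaces. If a direct choice fails, I would first try an induction on $r$, adding one idempotent $c_{r}$ at a time. At each step, apply the rank-$(r-1)$ result to $\bbE(1-c_r,1)$ and handle only the blocks $\bbE_{ir},\bbE_{rr}$ via the perturbation arguments above.
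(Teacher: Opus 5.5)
\textbf{Verdict: genuine gap.} Your reduction to $\bbU=\bbE(c_1+\dots+c_5,1)$ is correct, and so is the conclusion $P_{\bbU}A|_{\bbU}=aP_{\bbU}H_C|_{\bbU}$. But that is the easy part. You never fix the exposing element $D$, and you never prove $\COP(\bbE_+)\cap\{D\}^\perp\subseteq\bbR_+H_C$, which is the real content of the lemma.

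The perturbation step you rely on also does less than you claim. Perturbing $x_0=t_1^2c_1+t_2^2c_2$ in the direction $\pm x_{1k}$ gives only one relation, $t_1^2c_1\bullet A_{11,1k}(x_{1k})+t_2^2c_2\bullet A_{22,1k}(x_{1k})=0$. It does not make both blocks vanish. Set $w_j$ equal to $t_j^2c_j\bullet A_{jj,1k}(x_{1k})$. The first-order equalities from all zeros of $q_{H_C}$ inside $\bbU$ (the Horn zeros $(s,s+t,t,0,0)$ and their cyclic shifts) only say that $(w_1,\dots,w_5)$ is orthogonal to $\bm e_1+\bm e_2$, $\bm e_2+\bm e_3$, $\bm e_4+\bm e_5$, $\bm e_5+\bm e_1$. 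That leaves the direction $(1,-1,1,1,-1)$, the first row of $\bm H$, undetermined. So you must use zeros that contain $c_k$, and you must kill the blocks in a definite order so that each perturbation only sees blocks already shown to vanish. The blocks $A_{kk,kk}$, $A_{ij,kk}$ and $A_{ik,ik}$ can only be controlled through zeros of $q_A$ that contain $c_k$ or have an $\bbE_{ik}$-component. Those zeros have to be built into $D$ explicitly, and your proposal does not supply them.

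The paper runs exactly your fallback induction, passing from rank $r$ to rank $r_+=r+1$ with $c=\sum_{i\le r}c_i$. The ingredients you are missing are these:
\begin{itemize}
\item $D_2=c_{r_+}^{\otimes 2}$. Since $\bm v_C(c_k)=0$ for $k\ge 6$, $c_k$ is itself a zero of $q_{H_C}$, contrary to your remark. This term gives $A_{r_+r_+,r_+r_+}=0$ directly. Your $c_i+c_{i\plmod 1}+c_k$ is a zero only after rescaling to $t_i^2c_i+t_{i\plmod 1}^2c_{i\plmod 1}+c_k$, because $\norm{c_i}$ may differ between simple components.
\item $D_3=\sum_i(d_i+c_{r_+})^{\otimes 2}$, where $D_1=\sum_i d_i^{\otimes 2}$ is the exposing element from the induction hypothesis. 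Because $H_C$ is not positive semidefinite (Lemma~\ref{lem:Horn_not_psd}), the $d_i$ span $\bbE(c,1)$. This kills every $A_{ij,r_+r_+}$ with $i\le j\le r$.
\item $D_4$ uses $\{t_ic_i\pm(u_{ij}+u_{ik})\}^2$ padded by $(2+\norm{u_{ij}+u_{ik}}^2)t_{i\plmod 1}^2c_{i\plmod 1}$, so that $\bm v_C$ is a multiple of $\bm e_i+\bm e_{i\plmod 1}$ (for $i\le 5$).
\item $D_5$ uses $\{c_i\pm(u_{ij}+u_{ik})\}^2$ (for $i\ge 6$).
\end{itemize}
Adding the $\pm$ versions in $D_4$ and $D_5$, and using that the $(u_{ij}+u_{ik})^{\otimes 2}$ span $\calS(\bbE_{ir_+})$, gives $A_{ir_+,ir_+}=0$. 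Only after these steps do the six perturbation cases go through, in the order the paper lists them. Your proposal identifies the right overall strategy but does not yet prove the lemma.
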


\begin{proof}
We prove the statement by induction on the rank of $\bbE$.
When the rank of $\bbE$ is $5$, the statement holds by Lemma~\ref{lem:Horn_exposed_rank5}.
We assume that the statement holds for Euclidean Jordan algebras of rank $r \ge 5$.
For a Euclidean Jordan algebra $\bbE$ of rank $r_+\coloneqq r + 1$, we complement the set $C$ so that $\{c_1,\dots,c_{r_+}\}$ is a Jordan frame of $\bbE$.
We consider the Peirce decomposition~\eqref{eq:Peirce} of $\bbE$ with respect to this Jordan frame.
By the orthogonality of the Peirce decomposition, $H_C$, denoted by $H$ hereafter, satisfies
\begin{equation}
H(x) = 0 \text{ for all $(i,j) \not\in \{(1,1),(2,2),(3,3),(4,4),(5,5)\}$ and $x\in \bbE_{ij}$}. \label{eq:Hx=0}
\end{equation}
Let $c \coloneqq \sum_{i=1}^r c_i$.
Then $\{c_1,\dots,c_r\}$ is a Jordan frame of the Euclidean Jordan subalgebra $\bbE(c,1)$ of rank $r$.
We note that $C$ is also a set of orthogonal primitive idempotents in $\bbE(c,1)$, and we can view $H$ as an element in $\calS(\bbE(c,1))$.
By the inductive assumption, there exists $D_1 \in \CP(\bbE(c,1)_+)$ such that
\begin{equation}
\bbR_+ H = \COP(\bbE(c,1)_+;\bbE(c,1)) \cap \{D_1\}^\perp. \label{eq:Horn_exposed_rankr}
\end{equation}

When we write $D_1$ as $D_1 = \sum_{i=1}^m d_i^{\otimes 2}$ for some $d_1,\dots,d_m \in \bbE(c,1)_+$, \eqref{eq:Horn_exposed_rankr} implies that
\begin{equation}
d_i\bullet H(d_i) = 0 \label{eq:Horn_qf_at_di}
\end{equation}
for every $i = 1,\dots,m$.
In addition, the elements $d_1,\dots,d_m$ satisfy
\begin{equation}
\setspan\{d_1,\dots,d_m\} = \bbE(c,1). \label{eq:span_di}
\end{equation}
Indeed, if $\setspan\{d_1,\dots,d_m\}$ is strictly included in $\bbE(c,1)$, there exists $d \in \bbE(c,1) \setminus \{0\}$ such that $d\bullet d_i = 0$ for every $i = 1,\dots,m$.
Then it follows that $\langle d^{\otimes 2}, D_1\rangle = 0$.
In addition, we see that $d^{\otimes 2} \in \COP(\bbE(c,1)_+)$.
Therefore, it follows from \eqref{eq:Horn_exposed_rankr} that $d^{\otimes 2} \in \bbR_+ H$.
Since $H$ is not positive semidefinite as shown in Lemma~\ref{lem:Horn_not_psd}, $d^{\otimes 2}$ must be $0$, which contradicts $d\neq 0$.

Let $D_2 \coloneqq c_{r_+}^{\otimes 2} \in \CP(\bbE_+)$ and $D_3 \coloneqq \sum_{i=1}^m (d_i + c_{r_+})^{\otimes 2} \in \CP(\bbE_+)$.
For each $i = 1,\dots,5$, we let $t_i \coloneqq 1/\norm{c_i}$, let $\{u_{i1},\dots,u_{in_i}\}$ be a basis for the space $\bbE_{ir_+}$, and let
\begin{align*}
D_4 &\coloneqq \sum_{i=1}^5 \sum_{1\le j\le k\le n_i}[
2\{t_ic_i + (u_{ij} + u_{ik})\}^2 + (2 + \norm{u_{ij} + u_{ik}}^2)t_{i_{\plmod}}^2c_{i_{\plmod}}]^{\otimes 2}\\
&\quad + \sum_{i=1}^5 \sum_{1\le j\le k\le n_i}[
2\{t_ic_i - (u_{ij} + u_{ik})\}^2 + (2 + \norm{u_{ij} + u_{ik}}^2)t_{i_{\plmod}}^2c_{i_{\plmod}}]^{\otimes 2}\\
&\in \CP(\bbE_+),
\end{align*}
where we write $i_{\plmod}$ for $i\plmod 1$ for notational convenience.
In addition, for each $i = 6,\dots,r$, let $\{u_{i1},\dots,u_{in_i}\}$ be a basis for the space $\bbE_{ir_+}$ and let
\begin{align*}
D_5 &\coloneqq \sum_{i=6}^r\sum_{1\le j\le k\le n_i} [\{c_i + (u_{ij} + u_{ik})\}^2]^{\otimes 2} \\
&\qquad + \sum_{i=6}^r\sum_{1\le j\le k\le n_i} [\{c_i - (u_{ij} + u_{ik})\}^2]^{\otimes 2} \in \CP(\bbE_+).
\end{align*}
Finally, we define $D \coloneqq \sum_{i=1}^5 D_i \in \CP(\bbE_+)$.
In what follows, we show that $\bbR_+H = \COP(\bbE_+) \cap \{D\}^\perp$ to prove that $\bbR_+ H$ is an exposed ray of $\COP(\bbE_+)$.

It follows from Lemma~\ref{lem:Horn_cop} that $H \in \COP(\bbE_+)$.
We show that $\langle H,D_i\rangle = 0$ holds for every $i = 1,\dots,5$ to prove $H \in \{D\}^\perp$.
Firstly, it follows from \eqref{eq:Horn_exposed_rankr} that $\langle H,D_1\rangle = 0$.
Secondly, by $r_+ \ge 6$ and \eqref{eq:Hx=0}, we have $\langle H,D_2\rangle = 0$.
Thirdly, we have
\begin{equation*}
\langle H,D_3\rangle = \sum_{i=1}^m d_i\bullet H(d_i) + \sum_{i=1}^m 2d_i\bullet H(c_{r_+}) + mc_{r_+}\bullet H(c_{r_+}) = 0,
\end{equation*}
where in the second equality, the first term vanishes by \eqref{eq:Horn_qf_at_di}, and the second and third terms vanish by $r_+ \ge 6$ and \eqref{eq:Hx=0}.
Fourthly, for each $i = 1,\dots,5$ and $1\le j\le k\le n_i$, we see that
\begin{align}
&2\{t_ic_i \pm (u_{ij} + u_{ik})\}^2 + (2 + \norm{u_{ij} + u_{ik}}^2)t_{i_{\plmod}}^2c_{i_{\plmod}} \nonumber\\
&\quad = \underbrace{2t_i^2c_i + 2c_i\circ (u_{ij} + u_{ik})^2}_{\in \bbE_{ii}} \nonumber\\
&\qquad + \underbrace{(2 + \norm{u_{ij} + u_{ik}}^2)t_{i_{\plmod}}^2c_{i_{\plmod}}}_{\in \bbE_{i_{\plmod} i_{\plmod}}} \pm \underbrace{2t_i(u_{ij} + u_{ik})}_{\in \bbE_{ir_+}} + \underbrace{2c_{r_+}\circ (u_{ij} + u_{ik})^2}_{\in \bbE_{r_+r_+}}. \label{eq:D4_comp_vec}
\end{align}
The principal subtransformation of $H$ corresponding to the indices $ii$, $i_{\plmod}i_{\plmod}$, $ir_+$, and $r_+r_+$ is
\begin{equation*}
\bordermatrix{
& ii & i_{\plmod}i_{\plmod} & ir_+ & r_+r_+ \cr
& c_i^{\otimes 2} & -c_i\otimes c_{i_{\plmod}} & 0 & 0 \cr
&  & c_{i_{\plmod}}^{\otimes 2} & 0 & 0 \cr
&  &  & 0 & 0 \cr
&  &  &   & 0},
\end{equation*}
and
\begin{equation*}
\{2t_i^2c_i + 2c_i\circ (u_{ij} + u_{ik})^2\}\bullet c_i = \{(2 + \norm{u_{ij} + u_{ik}}^2)t_{i_{\plmod}}^2c_{i_{\plmod}}\} \bullet c_{i_{\plmod}} = 2  + \norm{u_{ij} + u_{ik}}^2,
\end{equation*}
where the equality between the first and third expressions follows from \eqref{eq:ci_ci_xij2}.
Therefore, we have $\langle H,D_4\rangle = 0$.
Fifthly, it follows from \eqref{eq:Hx=0} that $\langle H,D_5\rangle = 0$.

Conversely, we let $A \in \COP(\bbE_+) \cap \{D\}^\perp$.
Then it follows that $\langle A,D_i\rangle = 0$ for each $i = 1,\dots,5$.
Firstly, it follows from Lemma~\ref{lem:A_dot_B_reduced} that
\begin{equation*}
0 = \langle A,D_1\rangle = \langle P_{\bbE(c,1)}A|_{\bbE(c,1)},D_1\rangle = \sum_{i=1}^m d_i\bullet A(d_i).
\end{equation*}
In particular, we have
\begin{equation}
d_i\bullet A(d_i) = 0 \label{eq:A_qf_di=0}
\end{equation}
for every $i = 1,\dots,m$.
In addition, $P_{\bbE(c,1)}A|_{\bbE(c,1)} \in \COP(\bbE(c,1)_+)$ holds by Lemma~\ref{lem:subtf_cop}.
Therefore, by \eqref{eq:Horn_exposed_rankr}, there exists $a\in \bbR_+$ such that $P_{\bbE(c,1)}A|_{\bbE(c,1)} = aH$.
Since
\begin{equation}
\bbE(c,1) = \bigoplus_{1\le i\le j\le r}\bbE_{ij} \label{eq:Ec1_Peirce}
\end{equation}
holds by \eqref{eq:Peirce_truncated}, we see from \eqref{enum:element_equal} of Lemma~\ref{lem:submatrix_zero} that
\begin{equation}
A_{ij,kl} = aH_{ij,kl} \text{ for all $(ij,kl) \in \llbracket r\rrbracket$}. \label{eq:A_eq_aH_on_Ec1}
\end{equation}
Secondly, $\langle A,D_2\rangle = 0$ implies that
\begin{equation}
0 = \langle A, c_{r_+}^{\otimes 2}\rangle = c_{r_+}\bullet A(c_{r_+}) = c_{r_+}\bullet A_{r_+r_+,r_+r_+}(c_{r_+}). \label{eq:A_dot_D2}
\end{equation}
By $\bbE_{r_+r_+} = \bbR c_{r_+}$, it follows from \eqref{enum:element_0} of Lemma~\ref{lem:submatrix_zero} that
\begin{equation}
A_{r_+r_+,r_+r_+} = 0. \label{eq:Ar+r+r+r+_eq_0}
\end{equation}
Thirdly, $\langle A,D_3\rangle = 0$ implies that
\begin{align*}
0 &= \langle A,(d_i + c_{r_+})^{\otimes 2} \rangle\\
  &= d_i\bullet A(d_i) + 2d_i \bullet A(c_{r_+}) +  c_{r_+}\bullet A(c_{r_+})\\
&= 2d_i \bullet (P_{\bbE(c,1)}A|_{\bbE_{r_+r_+}})(c_{r_+}),
\end{align*}
for each $i = 1,\dots,m$, where the third equality follows from \eqref{eq:A_qf_di=0} and \eqref{eq:A_dot_D2}.
By \eqref{eq:span_di} and $\bbE_{r_+r_+} = \bbR c_{r_+}$, \eqref{enum:element_0} of Lemma~\ref{lem:submatrix_zero} implies that $P_{\bbE(c,1)}A|_{\bbE_{r_+r_+}} = 0$.
From \eqref{eq:Ec1_Peirce} and \eqref{enum:element_equal} of Lemma~\ref{lem:submatrix_zero}, we obtain
\begin{equation}
A_{ij,r_+r_+} = 0 \text{ for all $1\le i\le j\le r$}. \label{eq:Aijr+r+_eq_0}
\end{equation}
Fourthly, we note that for every $i = 1,\dots,5$, the principal subtransformation of $A$ corresponding to the indices $ii$, $i_{\plmod}i_{\plmod}$, $ir_+$, and $r_+r_+$ is
\begin{equation*}
\bordermatrix{
& ii & i_{\plmod}i_{\plmod} & ir_+ & r_+r_+ \cr
& ac_i^{\otimes 2} & -ac_i\otimes c_{i_{\plmod}} & A_{ii,ir_+} & \eqref{eq:Aijr+r+_eq_0} \cr
&  & ac_{i_{\plmod}}^{\otimes 2} & A_{i_{\plmod}i_{\plmod},ir_+} & \eqref{eq:Aijr+r+_eq_0} \cr
&  &  & A_{ir_+,ir_+} & A_{ir_+,r_+r_+} \cr
&  &  &   & \eqref{eq:Ar+r+r+r+_eq_0}},
\end{equation*}
where \eqref{eq:Ar+r+r+r+_eq_0} and \eqref{eq:Aijr+r+_eq_0} indicate that the corresponding elements are $0$ because of \eqref{eq:Ar+r+r+r+_eq_0} and \eqref{eq:Aijr+r+_eq_0}, respectively, and we also use this convention in the following proof.
From $\langle A,D_4\rangle = 0$, by using \eqref{eq:D4_comp_vec}, we have
\begin{align}
0 &= \langle A,[2\{t_ic_i \pm (u_{ij} + u_{ik})\}^2 + (2 + \norm{u_{ij} + u_{ik}}^2)t_{i_{\plmod}}^2c_{i_{\plmod}}]^{\otimes 2}\rangle \nonumber \\
&= \pm 8t_i(t_i^2c_i + c_i\circ (u_{ij} + u_{ik})^2)\bullet A_{ii,ir_+}(u_{ij} + u_{ik}) \nonumber \\
&\quad \pm 4t_it_{i_{\plmod}}^2(2 + \norm{u_{ij} + u_{ik}}^2)c_{i_{\plmod}}\bullet A_{i_{\plmod}i_{\plmod} ,ir_+}(u_{ij} + u_{ik}) \nonumber \\
&\quad + 4t_i^2\langle A_{ir_+,ir_+},(u_{ij} + u_{ik})^{\otimes 2}\rangle \nonumber\\
&\quad \pm 8t_i(u_{ij} + u_{ik}) \bullet A_{ir_+,r_+r_+}(c_{r_+}\circ (u_{ij} + u_{ik})^2) \label{eq:A_dot_D4_comp}
\end{align}
for every $i = 1,\dots,5$ and $1\le j\le k \le n_i$.
Adding \eqref{eq:A_dot_D4_comp} with the plus sign and that with the minus sign, and dividing the sum by $8t_i^2$, we have $\langle A_{ir_+,ir_+},(u_{ij} + u_{ik})^{\otimes 2}\rangle = 0$.
Since $\{u_{i1},\dots,u_{in_i}\}$ is a basis for $\bbE_{ir_+}$, the set $\{(u_{ij} + u_{ik})^{\otimes 2} \mid 1\le j\le k\le n_i\}$ is a basis for $\calS(\bbE_{ir_+})$~\cite[Lemma~6.2]{Dickinson2011}.
Therefore, it follows that
\begin{equation}
A_{ir_+,ir_+} = 0. \label{eq:Air+ir+_eq_0_ile5}
\end{equation}
Fifthly, from $\langle A,D_5 \rangle = 0$, we see that
\begin{align}
0 &= \langle A,[\{c_i \pm (u_{ij} + u_{ik})\}^2]^{\otimes 2}\rangle \nonumber \\
&= \{c_i \pm (u_{ij} + u_{ik})\}^2 \bullet\!\!\!\bordermatrix{
& ii & ir_+ & r_+r_+ \cr
& \eqref{eq:A_eq_aH_on_Ec1} & A_{ii,ir_+} & \eqref{eq:Aijr+r+_eq_0} \cr
&  & A_{ir_+,ir_+} & A_{ir_+,r_+r_+} \cr
&  &  & \eqref{eq:Ar+r+r+r+_eq_0}}(\{c_i \pm (u_{ij} + u_{ik})\}^2) \nonumber\\
&= \pm 2(c_i + c_i\circ (u_{ij} + u_{ik})^2)\bullet A_{ii,ir_+}(u_{ij} + u_{ik}) + \langle A_{ir_+,ir_+},(u_{ij} + u_{ik})^{\otimes 2}\rangle \nonumber \\
&\quad \pm 2(u_{ij} + u_{ik})\bullet A_{ir_+,r_+r_+}(c_{r_+} \circ (u_{ij} + u_{ik})^2) \label{eq:A_dot_D5_comp}
\end{align}
holds for every $i = 6,\dots,r$ and $1\le j\le k\le n_i$.
Adding \eqref{eq:A_dot_D5_comp} with the plus sign and that with the minus sign, and dividing the sum by $2$, we have $\langle A_{ir_+,ir_+},(u_{ij} + u_{ik})^{\otimes 2}\rangle = 0$.
Since $\{u_{i1},\dots,u_{in_i}\}$ is a basis for $\bbE_{ir_+}$, in the same way as in \eqref{eq:Air+ir+_eq_0_ile5}, it follows that
\begin{equation}
A_{ir_+,ir_+} = 0. \label{eq:Air+ir+_eq_0_ige6}
\end{equation}
From \eqref{eq:Air+ir+_eq_0_ile5} and \eqref{eq:Air+ir+_eq_0_ige6}, we obtain
\begin{equation}
A_{ir_+,ir_+} = 0 \text{ for all $1\le i\le r$}. \label{eq:Air+ir+_eq_0}
\end{equation}

To prove that $A = aH$, it suffices to show that the elements of $A$ corresponding to the following six types of indices $(ij,kl) \in \llbracket r_+\rrbracket$ are $0$:
\begin{enumerate}[C{a}se~a:]
\item $1\le i\le r$ and $j = k = l = r_+$;\label{enum:case_a_rkg}
\item $1\le i < k\le r$ and $j = l = r_+$;\label{enum:case_b_rkg}
\item $1\le i = j = k \le r$ and $l = r_+$;\label{enum:case_c_rkg}
\item $1\le i = j\le r$, $1\le k \le r$, $k \not\in \{i,j\}$, and $l = r_+$;\label{enum:case_d_rkg}
\item $1\le i < j \le r$, $k\in \{i,j\}$, and $l = r_+$;\label{enum:case_e_rkg}
\item $1\le i,j,k\le r$, $i,j,k$ are different from each other, and $l = r_+$.\label{enum:case_f_rkg}
\end{enumerate}
\begin{figure}
\begin{center}
\includegraphics{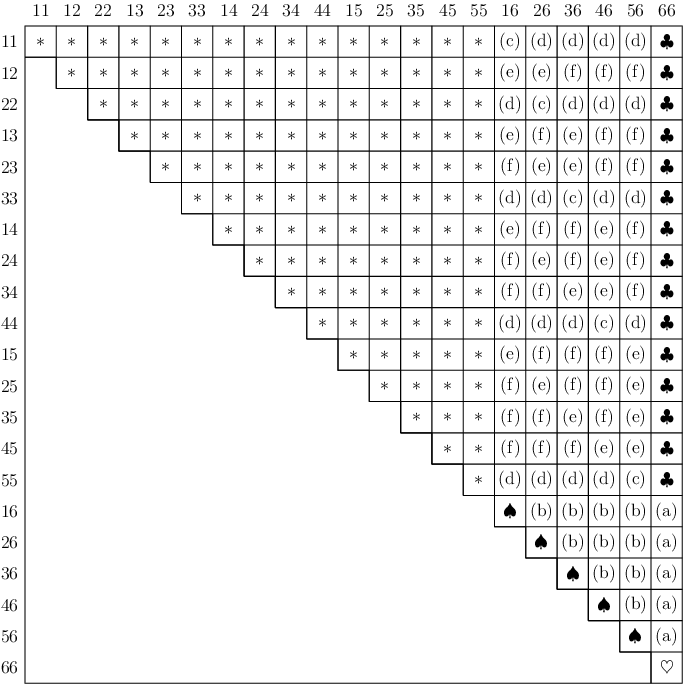}
\end{center}
\caption{Illustration of the six cases from Case~\ref{enum:case_a_rkg} to Case~\ref{enum:case_f_rkg} for $r = 5$.
The cells marked with $*$ correspond to the indices $(ij,kl) \in \llbracket r\rrbracket$, for which $A_{ij,kl} = aH_{ij,kl}$ holds as shown in \eqref{eq:A_eq_aH_on_Ec1}.
The cell marked with $\heartsuit$ indicates that the corresponding element is $0$ because of \eqref{eq:Ar+r+r+r+_eq_0}, those marked with $\clubsuit$ indicate that the corresponding elements are $0$ because of \eqref{eq:Aijr+r+_eq_0}, and those marked with $\spadesuit$ indicate that the corresponding elements are $0$ because of \eqref{eq:Air+ir+_eq_0}.}
\label{fig:Horn_exposed_rank_general}
\end{figure}
See Appendix for the reasoning of this case distinction.
See also Figure~\ref{fig:Horn_exposed_rank_general} for the above case distinction.

We recall the formula in \eqref{eq:x^2}, which is used in the following calculations.
In addition, as in Lemma~\ref{lem:Horn_exposed_rank5}, in the matrix representation of a linear transformation,
the element marked with the symbol $(\bigstar)$ is $0$ because of Case~$\bigstar$.
For example, in \eqref{eq:case_b_rkg_qf_mat}, the $(ir_+,r_+r_+)$th and $(kr_+,r_+r_+)$th elements are $0$ because of Case~\ref{enum:case_a_rkg}.

\noindent
\fbox{Case~\ref{enum:case_a_rkg}}
For any $x_{ir_+} \in \bbE_{ir_+}$ and $\epsilon > 0$, let
\begin{equation*}
x(\epsilon) \coloneqq (\pm \epsilon x_{ir_+} + c_{r_+})^2 = \underbrace{\epsilon^2 c_i\circ x_{ir_+}^2}_{\in \bbE_{ii}} \pm \underbrace{\epsilon x_{ir_+}}_{\in \bbE_{ir_+}} + \underbrace{c_{r_+} + \epsilon^2c_{r_+}\circ x_{ir_+}^2}_{\in \bbE_{r_+r_+}} \in \bbE_+.
\end{equation*}
Then it follows that
\begin{align}
0 &\le q_A(x(\epsilon)) \nonumber\\
&= x(\epsilon) \bullet\!\!\!\bordermatrix{
& ii & ir_+ & r_+r_+ \cr
& A_{ii,ii} & A_{ii,ir_+} & \eqref{eq:Aijr+r+_eq_0} \cr
&  & \eqref{eq:Air+ir+_eq_0} & A_{ir_+,r_+r_+} \cr
&  &  & \eqref{eq:Ar+r+r+r+_eq_0}
}(x(\epsilon)) \nonumber\\
&= \pm 2\epsilon x_{ir_+}\bullet A_{ir_+,r_+r_+}(c_{r_+}) + O(\epsilon^3). \label{eq:case1_rkg_qf}
\end{align}
Dividing \eqref{eq:case1_rkg_qf} by $2\epsilon$ and letting $\epsilon \downarrow 0$, we have $x_{ir_+}\bullet A_{ir_+,r_+r_+}(c_{r_+}) = 0$.
Since $x_{ir_+}\in \bbE_{ir_+}$ is arbitrary, by \eqref{enum:element_0} of Lemma~\ref{lem:submatrix_zero}, we obtain $A_{ir_+,r_+r_+} = 0$.

\noindent
\fbox{Case~\ref{enum:case_b_rkg}}
For any $x_{ir_+} \in \bbE_{ir_+}$, $x_{kr_+} \in \bbE_{kr_+}$, and $\epsilon > 0$, let
\begin{align*}
x(\epsilon) &\coloneqq (\epsilon^2 c_i \pm \epsilon x_{ir_+} + c_{r_+})^2 + (\epsilon^2 c_k + \epsilon x_{k r_+} + c_{r_+})^2\\
&= \underbrace{\epsilon^4c_i + \epsilon^2c_i\circ x_{ir_+}^2}_
{\in \bbE_{ii}} + \underbrace{\epsilon^4c_k + \epsilon^2c_k\circ x_{kr_+}^2}_{\in \bbE_{kk}} \pm \underbrace{(\epsilon^3 + \epsilon)x_{ir_+}}_{\in \bbE_{ir_+}}\\
&\quad + \underbrace{(\epsilon^3 + \epsilon)x_{kr_+}}_{\in \bbE_{kr_+}} + \underbrace{2c_{r_+} + \epsilon^2c_{r_+}\circ x_{ir_+}^2 + \epsilon^2 c_{r_+}\circ x_{kr_+}^2}_{\in \bbE_{r_+r_+}} \in \bbE_+.
\end{align*}
Then it follows that
\begin{align}
0 &\le q_A(x(\epsilon)) \nonumber\\
&= x(\epsilon)\bullet\!\!\!\bordermatrix{
& ii & kk & ir_+ & kr_+ & r_+r_+ \cr
& A_{ii,ii} & A_{ii,kk} & A_{ii,ir_+}  & A_{ii,kr_+} & \eqref{eq:Aijr+r+_eq_0} \cr
&  & A_{kk,kk} & A_{kk,ir_+} & A_{kk,kr_+} & \eqref{eq:Aijr+r+_eq_0} \cr
&  &  & \eqref{eq:Air+ir+_eq_0} & A_{ir_+,kr_+} & \eqref{enum:case_a_rkg} \cr
&  &  &  & \eqref{eq:Air+ir+_eq_0} & \eqref{enum:case_a_rkg} \cr
&  &  &  & & \eqref{eq:Ar+r+r+r+_eq_0}}(x(\epsilon)) \label{eq:case_b_rkg_qf_mat}\\
&= \pm 2\epsilon^2(\epsilon^2 + 1)^2 x_{ir_+}\bullet A_{ir_+,kr_+}(x_{kr_+}) + O(\epsilon^3). \label{eq:case2_rkg_qf}
\end{align}
Dividing \eqref{eq:case2_rkg_qf} by $2\epsilon^2(\epsilon^2 + 1)^2$ and letting $\epsilon \downarrow 0$, we have $x_{ir_+}\bullet A_{ir_+,kr_+}(x_{kr_+}) = 0$.
Since $x_{ir_+} \in \bbE_{ir_+}$ and $x_{kr_+} \in \bbE_{kr_+}$ are arbitrary, we obtain $A_{ir_+,kr_+} = 0$.

\noindent
\fbox{Case~\ref{enum:case_c_rkg}}
For any $x_{ir_+} \in \bbE_{ir_+}$ and $\epsilon > 0$, let
\begin{align*}
x(\epsilon) &\coloneqq \epsilon^3c_i + (\pm\epsilon^2x_{ir_+} + c_{r_+})^2 \\
&= \underbrace{\epsilon^3c_i + \epsilon^4c_i\circ x_{ir_+}^2}_{\in \bbE_{ii}} \pm \underbrace{\epsilon^2x_{ir_+}}_{\in \bbE_{ir_+}} + \underbrace{c_{r_+} + \epsilon^4c_{r_+}\circ x_{ir_+}^2}_{\in \bbE_{r_+r_+}} \in \bbE_+.
\end{align*}
Then it follows that
\begin{align}
0 &\le q_A(x(\epsilon)) \nonumber\\
&= x(\epsilon) \bullet\!\!\!\bordermatrix{
& ii & ir_+ & r_+r_+ \cr
& A_{ii,ii} & A_{ii,ir_+} & \eqref{eq:Aijr+r+_eq_0} \cr
&  & \eqref{eq:Air+ir+_eq_0} & \eqref{enum:case_a_rkg} \cr
&  &  & \eqref{eq:Ar+r+r+r+_eq_0}
}(x(\epsilon)) \nonumber\\
&= \pm 2\epsilon^5 c_i\bullet A_{ii,ir_+}(x_{ir_+}) + O(\epsilon^6). \label{eq:case3_rkg_qf}
\end{align}
Dividing \eqref{eq:case3_rkg_qf} by $2\epsilon^5$ and letting $\epsilon \downarrow 0$, we have $c_i\bullet A_{ii,ir_+}(x_{ir_+}) = 0$.
Since $x_{ir_+} \in \bbE_{ir_+}$ is arbitrary, we obtain $A_{ii,ir_+} = 0$.

\noindent
\fbox{Case~\ref{enum:case_d_rkg}}
For any $x_{kr_+} \in \bbE_{kr_+}$ and $\epsilon > 0$, let
\begin{align*}
x(\epsilon) &\coloneqq \epsilon^2c_i + (\pm \epsilon x_{kr_+} +c_{r_+})^2 \\
&= \underbrace{\epsilon^2c_i}_{\in \bbE_{ii}} + \underbrace{\epsilon^2c_k\circ x_{kr_+}^2}_{\in \bbE_{kk}} \pm \underbrace{\epsilon x_{kr_+}}_{\in \bbE_{kr_+}} + \underbrace{c_{r_+} + \epsilon^2c_{r_+}\circ x_{kr_+}^2}_{\in \bbE_{r_+r_+}} \in \bbE_+.
\end{align*}
Then it follows that
\begin{align}
0 &\le q_A(x(\epsilon)) \nonumber \\
&= x(\epsilon)\bullet\!\!\!\bordermatrix{
& ii & kk & kr_+ & r_+r_+ \cr
& A_{ii,ii} & A_{ii,kk} & A_{ii,kr_+} & \eqref{eq:Aijr+r+_eq_0} \cr
&  & A_{kk,kk} & \eqref{enum:case_c_rkg} & \eqref{eq:Aijr+r+_eq_0} \cr
&  &  & \eqref{eq:Air+ir+_eq_0} & \eqref{enum:case_a_rkg} \cr
&  &  &   & \eqref{eq:Ar+r+r+r+_eq_0}}(x(\epsilon)) \nonumber\\
&= \pm 2\epsilon^3 c_i\bullet A_{ii,kr_+}(x_{kr_+}) + O(\epsilon^4). \label{eq:case4_rkg_qf}
\end{align}
Dividing \eqref{eq:case4_rkg_qf} by $2\epsilon^3$ and letting $\epsilon \downarrow 0$, we have $c_i\bullet A_{ii,kr_+}(x_{kr_+}) = 0$.
Since $x_{kr_+} \in \bbE_{kr_+}$ is arbitrary, we obtain $A_{ii,kr_+} = 0$.

\noindent
\fbox{Case~\ref{enum:case_e_rkg}}
Here, we restrict attention to the case $k = i$ and show that $A_{ij,ir_+} = 0$.
The equality $A_{ij,jr_+} = 0$ can be proved analogously.
For any $x_{ij} \in \bbE_{ij}$, $x_{ir_+} \in \bbE_{ir_+}$, and $\epsilon > 0$, let
\begin{align*}
x(\epsilon) &\coloneqq \epsilon^2(c_i \pm x_{ij})^2 + (\epsilon x_{ir_+} + c_{r_+})^2\\
&= \underbrace{\epsilon^2c_i + \epsilon^2c_i\circ x_{ij}^2 + \epsilon^2c_i\circ x_{ir_+}^2}_{\in \bbE_{ii}} \pm \underbrace{\epsilon^2 x_{ij}}_{\in \bbE_{ij}}\\
&\quad + \underbrace{\epsilon^2c_j\circ x_{ij}^2}_{\in \bbE_{jj}} + \underbrace{\epsilon x_{ir_+}}_{\in \bbE_{ir_+}} + \underbrace{c_{r_+} + \epsilon^2c_{r_+}\circ x_{ir_+}^2}_{\in \bbE_{r_+r_+}} \in \bbE_+.
\end{align*}
Then it follows that
\begin{align}
0 &\le q_A(x(\epsilon)) \nonumber\\
&= x(\epsilon)\bullet\!\!\!\bordermatrix{
& ii & ij & jj & ir_+ & r_+r_+ \cr
& A_{ii,ii} & \eqref{eq:A_eq_aH_on_Ec1} & A_{ii,jj}  & \eqref{enum:case_c_rkg} & \eqref{eq:Aijr+r+_eq_0} \cr
&  & \eqref{eq:A_eq_aH_on_Ec1} & \eqref{eq:A_eq_aH_on_Ec1} & A_{ij,ir_+} & \eqref{eq:Aijr+r+_eq_0} \cr
&  &  & A_{jj,jj} & \eqref{enum:case_d_rkg} & \eqref{eq:Aijr+r+_eq_0} \cr
&  &  &  & \eqref{eq:Air+ir+_eq_0} & \eqref{enum:case_a_rkg} \cr
&  &  &  & & \eqref{eq:Ar+r+r+r+_eq_0}}(x(\epsilon)) \nonumber\\
&= \pm 2\epsilon^3 x_{ij}\bullet A_{ij,ir_+}(x_{ir_+}) + O(\epsilon^4). \label{eq:case5_rkg_qf}
\end{align}
Dividing \eqref{eq:case5_rkg_qf} by $2\epsilon^3$ and letting $\epsilon \downarrow 0$, we have $x_{ij}\bullet A_{ij,ir_+}(x_{ir_+}) = 0$.
Since $x_{ij} \in \bbE_{ij}$ and $x_{ir_+} \in \bbE_{ir_+}$ are arbitrary, we obtain $A_{ij,ir_+} = 0$.

\noindent
\fbox{Case~\ref{enum:case_f_rkg}}
For any $x_{ij} \in \bbE_{ij}$, $x_{kr_+} \in \bbE_{kr_+}$, and $\epsilon > 0$, let
\begin{align*}
x(\epsilon) &\coloneqq \epsilon^2(c_i \pm x_{ij})^2 + (\epsilon x_{kr_+} + c_{r_+})^2\\
&= \underbrace{\epsilon^2c_i + \epsilon^2c_i \circ x_{ij}^2}_{\in \bbE_{ii}} \pm \underbrace{\epsilon^2x_{ij}}_{\in \bbE_{ij}} + \underbrace{\epsilon^2 c_j\circ x_{ij}^2}_{\in \bbE_{jj}}\\
&\quad + \underbrace{\epsilon^2 c_k\circ x_{kr_+}^2}_{\in \bbE_{kk}} + \underbrace{\epsilon x_{kr_+}}_{\in \bbE_{kr_+}} + \underbrace{c_{r_+} + \epsilon^2c_{r_+}\circ x_{kr_+}^2}_{\in \bbE_{r_+r_+}} \in \bbE_+.
\end{align*}
Then it follows that
\begin{align}
0 &\le q_A(x(\epsilon)) \nonumber\\
&= x(\epsilon)\bullet\!\!\!\bordermatrix{
& ii & ij & jj & kk & kr_+ & r_+r_+ \cr
& A_{ii,ii} & \eqref{eq:A_eq_aH_on_Ec1} & A_{ii,jj} & A_{ii,kk} & \eqref{enum:case_d_rkg} & \eqref{eq:Aijr+r+_eq_0} \cr
&  & \eqref{eq:A_eq_aH_on_Ec1} & \eqref{eq:A_eq_aH_on_Ec1} & \eqref{eq:A_eq_aH_on_Ec1} & A_{ij,kr_+} & \eqref{eq:Aijr+r+_eq_0} \cr
&  &  & A_{jj,jj} & A_{jj,kk} & \eqref{enum:case_d_rkg} & \eqref{eq:Aijr+r+_eq_0} \cr
&  &  &  & A_{kk,kk} & \eqref{enum:case_c_rkg} & \eqref{eq:Aijr+r+_eq_0} \cr
&  &  &  & & \eqref{eq:Air+ir+_eq_0} & \eqref{enum:case_a_rkg} \cr
&  &  &  & & & \eqref{eq:Ar+r+r+r+_eq_0}}(x(\epsilon)) \nonumber\\
&= \pm 2\epsilon^3 x_{ij}\bullet A_{ij,kr_+}(x_{kr_+}) + O(\epsilon^4). \label{eq:case6_rkg_qf}
\end{align}
Dividing \eqref{eq:case6_rkg_qf} by $2\epsilon^3$ and letting $\epsilon \downarrow 0$, we have $x_{ij}\bullet A_{ij,kr_+}(x_{kr_+}) = 0$.
Since $x_{ij} \in \bbE_{ij}$ and $x_{kr_+} \in \bbE_{kr_+}$ are arbitrary, we obtain $A_{ij,kr_+} = 0$.
\end{proof}

\begin{theorem}\label{thm:Horn_exposed_symcone}
Suppose that the rank of the symmetric cone $\bbK$ is at least $5$.
For a set $\Delta = \{\delta_1,\dots,\delta_5\}$ whose elements are pairwise orthogonal and generate extreme rays of $\bbK$, the Horn transformation $H_{\Delta}$ generates an exposed ray of $\COP(\bbK)$.
\end{theorem}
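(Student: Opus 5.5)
The plan is to reduce Theorem~\ref{thm:Horn_exposed_symcone} to Lemma~\ref{lem:Horn_exposed} by passing to the associated Euclidean Jordan algebra and rescaling. Let $(\bbE,\circ,\bullet)$ be the Euclidean Jordan algebra with $\bbE_+ = \bbK$, whose rank is at least $5$ by definition of the rank of $\bbK$. As recalled in Section~\ref{subsec:EJA}, each $\delta_i$ generates an extreme ray of $\bbK$, so $\delta_i = \lambda_i c_i$ for some $\lambda_i > 0$ and some primitive idempotent $c_i$. Because $\delta_1,\dots,\delta_5$ are pairwise orthogonal, $C \coloneqq \{c_1,\dots,c_5\}$ is a set of orthogonal primitive idempotents. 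Lemma~\ref{lem:Horn_exposed} then says that $H_C$ generates an exposed ray of $\COP(\bbE_+) = \COP(\bbK)$.

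The task is to transfer exposedness from $H_C$ to $H_\Delta$. These two transformations are not scalar multiples of each other when the $\lambda_i$ differ, so I will use an automorphism of $\COP(\bbK)$. Since $\bbK$ is homogeneous, I want an automorphism $G$ of $\bbK$ with $G(c_i) = \lambda_i c_i$ for $i=1,\dots,5$. The natural choice is the quadratic representation $G \coloneqq P(w)$ with $w \coloneqq \sum_{i=1}^5 \sqrt{\lambda_i}\,c_i + \sum_{i=6}^r c_i$, after completing $C$ to a Jordan frame $\{c_1,\dots,c_r\}$. For $w$ invertible, $P(w)$ is an automorphism of $\bbK$, it is self-adjoint with respect to the associative inner product, and $P(w)c_i = \lambda_i c_i$ because $w$ operator-commutes with the frame. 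This gives
\begin{equation*}
G H_C G^* = \sum \pm (Gc_i)\otimes (Gc_j) = H_\Delta.
\end{equation*}
The map $\Phi(A) \coloneqq G A G^*$ is an invertible linear map on $\calS(\bbE)$. It preserves $\COP(\bbK)$, since $q_{\Phi(A)}(x) = q_A(G^*x)$ and $G^* = G$ maps $\bbK$ onto $\bbK$ (self-duality gives this for the adjoint in general). Its inverse is of the same form with $G^{-1}$, so $\Phi$ is an automorphism of $\COP(\bbK)$.

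Finally, I will check that exposed rays are preserved by linear automorphisms of a cone. If $\bbR_+ H_C = \COP(\bbK)\cap\{D\}^\perp$ with $D \in \CP(\bbK)$, then
\begin{equation*}
\bbR_+ H_\Delta = \COP(\bbK)\cap\{\Phi^{-*}(D)\}^\perp.
\end{equation*}
Here $\Phi^{-*}(D) = G^{-1} D G^{-1}$, which maps $\sum d_k^{\otimes 2}$ to $\sum (G^{-1}d_k)^{\otimes 2}$ and hence stays in $\CP(\bbK)$. Concretely, $A \perp \Phi^{-*}(D)$ holds if and only if $\Phi^{-1}(A) \perp D$.

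The main obstacle is the middle step: producing $G$ with $G(c_i)=\lambda_i c_i$ and verifying its self-adjointness. This needs the standard facts about $P(w)$ from \cite{FK1994}, namely that $P(w)$ is an automorphism of $\bbE_+$ for invertible $w$ and that $P(w)c_i = (\text{eigenvalue})^2 c_i$ for $w$ diagonal in the frame. If I want to avoid $P(w)$, a fallback is to rerun the proofs of Lemmas~\ref{lem:Horn_exposed_rank5} and~\ref{lem:Horn_exposed} with $\delta_i$ in place of $c_i$. The only change would be replacing $t_i$ by $1/\norm{\delta_i}$-type scalings, but that is much heavier, so the automorphism route is preferred.
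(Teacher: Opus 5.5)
Your proposal is correct and takes essentially the same route as the paper. You write $\delta_i=\lambda_i c_i$, apply Lemma~\ref{lem:Horn_exposed} to $H_C$, and conjugate by the quadratic representation of $\sum_i\sqrt{\lambda_i}\,c_i$ (completed on $c_6,\dots,c_r$) to get $H_\Delta$. The only difference is that the paper cites \cite[Corollary~12]{GS2013} for $A\mapsto Q_cAQ_c$ being an automorphism of $\COP(\bbK)$ and leaves the transfer of exposedness implicit, whereas you verify both directly, including the explicit exposing element $G^{-1}DG^{-1}\in\CP(\bbK)$.
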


\begin{proof}
As mentioned in Section~\ref{subsec:EJA}, there exist $a_i > 0$ and a primitive idempotent $c_i$ in $\bbE$ such that $\delta_i = a_ic_i$ for each $i = 1,\dots,5$, and $C\coloneqq \{c_1,\dots,c_5\}$ is a set of orthogonal primitive idempotents in $\bbE$.
Let $r$ denote the rank of the Euclidean Jordan algebra $\bbE$.
We complement $C$ so that $\{c_1,\dots,c_r\}$ is a Jordan frame of $\bbE$.
Take positive numbers $a_6,\dots,a_r$ arbitrarily and let $c \coloneqq \sum_{i=1}^r\sqrt{a_i}c_i$.
The quadratic representation $Q_c$, which is a self-adjoint linear transformation on $\bbE$ that maps $x$ to $2c\circ(c\circ x) - c^2\circ x$, is an automorphism of $\bbK$ since $c$ is invertible~\cite[Proposition~\RNum{3}.2.2]{FK1994}.
It follows from \cite[Corollary~12]{GS2013} that the linear transformation on $\calS(\bbE)$ that maps $A$ to $Q_cAQ_c$ is an automorphism of $\COP(\bbK)$.
Combining this with Lemma~\ref{lem:Horn_exposed}, we conclude that $Q_c H_C Q_c$ generates an exposed ray of $\COP(\bbK)$.
Moreover, a direct calculation shows that $Q_c H_C Q_c = H_{\Delta}$.
This yields the desired result.
\end{proof}

\section{Non-Exactness of a Sum-of-Squares Inner-Approximation Hierarchy}\label{sec:non_exact}
Let $\bbK$ be a symmetric cone, and let $(\bbE,\circ,\bullet)$ be an associated Euclidean Jordan algebra satisfying $\bbE_+ = \bbK$.
We write $n$ for the dimension of $\bbE$.
For each nonnegative integer $l$, Nishijima and Nakata~\cite{NN2024_Approximation} defined
\begin{equation}
\calI_l(\bbE_+) \coloneqq \{A \in \calS(\bbE) \mid (x\bullet x)^l q_A(x^2) \in \Sigma^{n,2(l+2)}(\bbE)\}. \label{eq:hierarchy}
\end{equation}
Here we use the notation $\calI_l(\bbE_+)$ instead of $\calI_l(\bbK)$ because it depends on the choice of the Euclidean Jordan algebra $\bbE$ associated with $\bbK$.
They showed in \cite[Theorem~3.4]{NN2024_Approximation} that the sequence $(\calI_l(\bbE_+))_{l\ge 0}$ is an \emph{inner-approximation hierarchy for $\COP(\bbK)$}: it is non-decreasing in $l$, each $\calI_l(\bbE_+)$ is included in $\COP(\bbK)$, and the interior of $\COP(\bbK)$ is contained in $\bigcup_{l\ge 0} \calI_l(\bbE_+)$.
In particular, we have the following inclusion:
\begin{equation}
\bigcup_{l\ge 0} \calI_l(\bbE_+) \subseteq \COP(\bbK). \label{eq:hierarchy_inclusion}
\end{equation}
If the Euclidean Jordan algebra $\bbE$ is the Hadamard Euclidean Jordan algebra $\bbR^r$, the hierarchy $(\calI_l(\bbR_+^r))_{l\ge 0}$ in $\bbS^r$ reduces to that provided by Parrilo~\cite{Parrilo2000_Semidefinite}.

In this section, we demonstrate the non-exactness of the inner-approximation hierarchy $(\calI_l(\bbE_+))_{l\ge 0}$ for $\COP(\bbK)$ when the rank of the symmetric cone $\bbK$ is at least $5$.
In Section~\ref{subsec:Horn_zeroth}, we prove that the Horn transformation defined in \eqref{eq:Horn_tr} does not belong to the zeroth level $\calI_0(\bbE_+)$ of this hierarchy.
In Section~\ref{subsec:not_exact_over_psd}, we show that when the Euclidean Jordan algebra $\bbE$ is associated with the positive semidefinite cone $\bbS_+^r$ for $r\ge 5$, the inclusion in \eqref{eq:hierarchy_inclusion} is indeed strict.

\subsection{Horn Transformations and Zeroth-Level Non-Exactness}\label{subsec:Horn_zeroth}
In this subsection, for a general symmetric cone $\bbK$ of rank at least $5$, we provide a certificate that the copositive cone $\COP(\bbK)$ is distinct from the zeroth level of the hierarchy $(\calI_l(\bbE_+))_{l\ge 0}$ defined in \eqref{eq:hierarchy}.

We begin by recalling the corresponding result for Hadamard Euclidean Jordan
algebras.
In this case, the hierarchy reduces to Parrilo's hierarchy for
copositive cones over nonnegative orthants, and the Horn matrix or its embedding gives a certificate for the failure of exactness at the zeroth level~\cite[Section~5]{Parrilo2000_Semidefinite}.

\begin{theorem}\label{thm:Horn_not_sos_Hadamard}
Let $r$ be an integer at least $5$.
Let $\widetilde{\bm{H}}\in\bbS^r$ be the Horn matrix $\bm{H}$ defined in \eqref{eq:Horn_mat} if $r=5$, and its zero-padded embedding if $r>5$.
Then the polynomial $(\bm{x}\odot \bm{x})^\top\widetilde{\bm{H}} (\bm{x}\odot \bm{x})$ is not a sum of squares; that is, the matrix $\widetilde{\bm{H}}$ does not belong to $\calI_0(\bbR_+^r)$.
\end{theorem}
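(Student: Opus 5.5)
First reduce to $r=5$, then give the classical argument that the Horn form is not a sum of squares. For $r>5$, write $p(\bm{x}) \coloneqq (\bm{x}\odot\bm{x})^\top\widetilde{\bm{H}}(\bm{x}\odot\bm{x})$. This quartic depends only on $x_1,\dots,x_5$. If $p=\sum_k g_k^2$ with quadratic forms $g_k$ in $x_1,\dots,x_r$, set $x_6=\dots=x_r=0$. This gives a representation of the five-variable Horn form $h(\bm{x}) \coloneqq (\bm{x}\odot\bm{x})^\top\bm{H}(\bm{x}\odot\bm{x})$ as a sum of squares of quadratic forms in $x_1,\dots,x_5$. It therefore suffices to show that $h$ is not a sum of squares. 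Equivalently, by \eqref{eq:hierarchy} with $l=0$, we must show $\bm{H}\notin\calI_0(\bbR_+^5)$.

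Suppose $h=\sum_k g_k^2$ with each $g_k$ a quadratic form. Expanding,
\begin{equation*}
h(\bm{x}) = \Bigl(\sum_{i=1}^5 x_i^2\Bigr)^2 - 4\sum_{i=1}^5 x_i^2x_{i\plmod 1}^2 ,
\end{equation*}
or equivalently $\sum_i x_i^4 - 2\sum_i x_i^2x_{i\plmod1}^2 + 2\sum_i x_i^2x_{i\plmod2}^2$. I would use Newton-polytope and support arguments on the $g_k$. Since $h$ contains no monomial $x_i^3x_j$ and no monomial with a variable to an odd power, the usual comparison of extreme monomials restricts the supports. The coefficient of $x_i^4$ in $\sum_k g_k^2$ is the sum over $k$ of the squared coefficient of $x_i^2$ in $g_k$. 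The coefficient of $x_i^2x_j^2$ equals the sum over $k$ of $2a_{ii}^{(k)}a_{jj}^{(k)} + (a_{ij}^{(k)})^2$, where $a^{(k)}$ denotes the coefficient matrix of $g_k$.

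The main step is to show that the cross terms $a_{ij}^{(k)}$ with $j=i\plmod1$ must vanish, and then reach a contradiction. The plan is to evaluate on zeros of $h$. The form $h$ vanishes at every $\bm{x}$ with $\bm{x}\odot\bm{x}$ in the set $\{\bm{e}_i+\bm{e}_{i\plmod1}\}$, and more generally on the zero set $\{\bm{y}\ge0:\bm{y}^\top\bm{H}\bm{y}=0\}$. This set contains points such as $\bm{y}=(1,1,0,0,0)$ and, for instance, $\bm{y}=(s,s+t,t,0,0)$. Each $g_k$ must vanish at all $\bm{x}$ with $\bm{x}\odot\bm{x}$ in this set, for every choice of signs.

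Taking $\bm{x}=(\pm\sqrt{s},\pm\sqrt{s+t},\pm\sqrt{t},0,0)$ and varying the signs forces every off-diagonal coefficient $a^{(k)}_{12},a^{(k)}_{13},a^{(k)}_{23}$ to vanish. It also forces the diagonal coefficients to satisfy $a_{11}s+a_{22}(s+t)+a_{33}t=0$ for all $s,t\ge0$, that is, $a_{11}=-a_{22}=a_{33}$. Applying this cyclically to every window of three consecutive indices gives two conclusions. All off-diagonal coefficients vanish except possibly those with indices two apart, and $a_{ii}=-a_{i\plmod1,i\plmod1}$ for all $i$. Around a $5$-cycle this sign alternation forces all $a_{ii}^{(k)}=0$. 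The indices two apart are also consecutive-window pairs $(i,i\plmod2)$, so those coefficients vanish as well. Hence every $g_k=0$, contradicting $h(\bm{e}_1)=1$.

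The main obstacle I expect is verifying the parametrized zero family and carrying out the sign-pattern bookkeeping that kills the off-diagonal terms. I would first check the zero family directly: $\bm{y}^\top\bm{H}\bm{y}=(y_1-y_2+y_3)^2$ when $y_4=y_5=0$, which vanishes for $y_2=y_1+y_3$. The sign choices then separate the monomial $\sqrt{st}$ term from the $\sqrt{s(s+t)}$ and $\sqrt{t(s+t)}$ terms.
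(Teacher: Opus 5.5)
Your argument is correct, but it takes a different route from the paper. The paper gives no proof of this statement; it simply cites Parrilo's thesis. The standard route there is to identify the zeroth level $\calI_0(\bbR_+^5)$ with $\bbS_+^5+\mathcal{N}^5$, where $\mathcal{N}^5$ denotes the entrywise nonnegative matrices, and then invoke the classical fact (Hall--Newman) that $\bm{H}$ admits no such splitting.

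You instead work directly with the real zeros of $h$. Because $\bm{H}$ is circulant, on each face where only $y_i,y_{i\plmod1},y_{i\plmod2}$ are nonzero you get $\bm{y}^\top\bm{H}\bm{y}=(y_i-y_{i\plmod1}+y_{i\plmod2})^2$. Hence every $g_k$ vanishes at $(\pm\sqrt{s},\pm\sqrt{s+t},\pm\sqrt{t})$ on that window. The four achievable sign patterns of $(\sigma_1\sigma_2,\sigma_1\sigma_3,\sigma_2\sigma_3)$ give an invertible $4\times4$ Hadamard-type system. Taking $s,t>0$, this kills the three off-diagonal coefficients and forces $a_{ii}=-a_{i\plmod1,i\plmod1}$. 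The odd length of the $5$-cycle then forces all diagonal coefficients to vanish.

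Your route is fully elementary and self-contained. It proves the stronger fact that no nonzero quadratic form vanishes on the real zero set of $h$, and it shows exactly where the oddness of the pentagon enters. The cited route is shorter once the machinery is available. It also explains the phenomenon structurally, by tying the zeroth level to the $\bbS_+ + \mathcal{N}$ decomposition and to the extremality of $\bm{H}$. Your reduction from $r>5$ to $r=5$ by setting $x_6=\dots=x_r=0$ is fine.

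Two small cleanups:
\begin{itemize}
\item The Newton-polytope and coefficient paragraph is never used and can be dropped.
\item The remark that the pairs two apart ``also'' vanish is redundant. Each window $(i,i\plmod1,i\plmod2)$ already contains the pair $(i,i\plmod2)$, so all ten off-diagonal pairs are covered directly.
\end{itemize}
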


For every nonnegative integer $l$, the cone $\calI_l(\bbE_+)$ is a spectrahedral shadow.
This follows from \eqref{enum:image} of Lemma~\ref{lem:spec_shadow}, since $\Sigma^{n,2(l+2)}(\bbE)$ is a spectrahedral shadow \cite[Corollary~3.40]{Parrilo2012} and $\calI_l(\bbE_+)$ is its linear inverse image.
On the other hand, Theorem~\ref{thm:COP_not_shadow} shows that $\COP(\bbK)$ is not a spectrahedral shadow, so $\calI_l(\bbE_+)$ is strictly included in $\COP(\bbK)$ for every nonnegative integer $l$.

Theorem~\ref{thm:Horn_not_sos_Hadamard} gives such a certificate in the case of nonnegative orthants.
In the following theorem, we extend this certificate to general symmetric cones of rank at least $5$.

\begin{theorem}\label{thm:not_sos}
Suppose that the rank of the symmetric cone $\bbK$ is at least $5$.
Recall that $(\bbE,\circ,\bullet)$ is the associated Euclidean Jordan algebra satisfying $\bbE_+ = \bbK$.
For a set $\Delta = \{\delta_1,\dots,\delta_5\}$ whose elements are pairwise orthogonal and generate extreme rays of $\bbK$,
it follows that $q_{H_{\Delta}}(x^2) \not\in \Sigma^{n,4}(\bbE)$, i.e., $H_{\Delta} \not\in \calI_0(\bbE_+)$.
\end{theorem}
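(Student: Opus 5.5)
We reduce to Theorem~\ref{thm:Horn_not_sos_Hadamard} by restricting the quartic to a suitable linear subspace of $\bbE$. As in the proof of Theorem~\ref{thm:Horn_exposed_symcone}, we write $\delta_i = a_i c_i$ with $a_i>0$ and $C=\{c_1,\dots,c_5\}$ a set of orthogonal primitive idempotents, and we complement $C$ to a Jordan frame $\{c_1,\dots,c_r\}$ of $\bbE$. Let $\bbU \coloneqq \setspan\{c_1,\dots,c_5\}$. For $x=\sum_{i=1}^5 y_i c_i\in\bbU$ we have $x^2=\sum_{i=1}^5 y_i^2 c_i$ by orthogonality, so $\delta_i\bullet x^2 = a_i y_i^2\norm{c_i}^2$. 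Hence
\begin{equation*}
q_{H_\Delta}(x^2) = \bm{v}_\Delta(x^2)^\top \bm{H}\,\bm{v}_\Delta(x^2) = (\bm{z}\odot\bm{z})^\top \bm{H}(\bm{z}\odot\bm{z}),
\end{equation*}
where $z_i \coloneqq \sqrt{a_i}\,\norm{c_i}\,y_i$. Since $\bm{y}\mapsto\bm{z}$ is an invertible diagonal linear change of variables, the restriction of $q_{H_\Delta}(x^2)$ to $\bbU$ is, in the coordinates $\bm{z}$, exactly the Horn quartic of Theorem~\ref{thm:Horn_not_sos_Hadamard} with $r=5$.

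Next we argue by contradiction. Suppose that $q_{H_\Delta}(x^2)=\sum_k g_k(x)^2$ with each $g_k$ a quadratic form on $\bbE$. Restricting each $g_k$ to $\bbU$ gives a quadratic form $g_k|_{\bbU}$, which in turn is a quadratic form in $\bm{z}$ after the linear change of variables. This yields an SOS decomposition of $(\bm{z}\odot\bm{z})^\top\bm{H}(\bm{z}\odot\bm{z})$ on $\bbR^5$. We must check that restriction to a subspace and composition with a linear isomorphism preserve the property of being a sum of squares of forms of degree $2$. Both facts are immediate from the definition of polynomials as elements of the symmetric algebra, since restricting the linear forms $(v_i,\cdot)$ to $\bbU$ yields linear forms on $\bbU$. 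The resulting decomposition contradicts Theorem~\ref{thm:Horn_not_sos_Hadamard} for $r=5$. Therefore $q_{H_\Delta}(x^2)\notin\Sigma^{n,4}(\bbE)$, and by \eqref{eq:hierarchy} we conclude $H_\Delta\notin\calI_0(\bbE_+)$.

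The main obstacle is the bookkeeping in the first step: confirming that $x^2$ stays in $\bbU$ with coordinates $y_i^2$, and that the inner products $\delta_i\bullet c_j$ vanish for $i\ne j$. Both follow from $c_i\circ c_j=0$ together with associativity of the inner product, since $c_i\bullet c_j = (c_i\circ c_i)\bullet c_j = c_i\bullet(c_i\circ c_j)=0$. Everything else reduces directly to the known Hadamard case.
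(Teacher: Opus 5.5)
Your proposal is correct and follows essentially the paper's proof. The paper makes the same substitution $x(\bm{t})=\sum_{i=1}^5 t_i c_i/(\sqrt{a_i}\norm{c_i})$, which are your $\bm{z}$-coordinates on $\setspan\{c_1,\dots,c_5\}$, obtains $q_{H_\Delta}(x(\bm{t})^2)=(\bm{t}\odot\bm{t})^\top\bm{H}(\bm{t}\odot\bm{t})$, and pulls back a hypothetical sum-of-squares decomposition to contradict Theorem~\ref{thm:Horn_not_sos_Hadamard} with $r=5$; your complementation of $C$ to a Jordan frame is unnecessary but harmless.
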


\begin{proof}
For each $i = 1,\dots,5$, we take $a_i > 0$ and a primitive idempotent $c_i$ in $\bbE$ satisfying $\delta_i = a_ic_i$.
For $\bm{t} \in \bbR^5$, we define
\begin{equation*}
x(\bm{t}) \coloneqq \sum_{i=1}^5 \frac{t_i}{\sqrt{a_i}\norm{c_i}}c_i \in \bbE.
\end{equation*}
Then $\delta_i \bullet x(\bm{t})^2 = t_i^2$ for every $i = 1,\dots,5$.
This implies that, using the notation $\bm{v}_{\Delta}(x)$ introduced in \eqref{eq:vdelta}, we have $\bm{v}_{\Delta}(x(\bm{t})^2) = \bm{t}\odot \bm{t}$.
Hence, we obtain
\begin{equation}
q_{H_{\Delta}}(x(\bm{t})^2) = (\bm{t}\odot \bm{t})^\top \bm{H}(\bm{t}\odot \bm{t}). \label{eq:qxt}
\end{equation}

Now, to derive a contradiction, we assume that $q_{H_{\Delta}}(x^2) \in \Sigma^{n,4}(\bbE)$.
Then there exist quadratic forms $q_1,\dots,q_m$ such that $q_{H_{\Delta}}(x^2) = \sum_{i=1}^m q_i^2(x)$.
By substituting $x(\bm{t})$ for $x$ in $q_{H_{\Delta}}(x^2)$, we see from \eqref{eq:qxt} that
\begin{equation}
(\bm{t}\odot \bm{t})^\top \bm{H}(\bm{t}\odot \bm{t}) = \sum_{i=1}^m q_i^2(x(\bm{t})). \label{eq:qxt_sos}
\end{equation}
The right-hand side of \eqref{eq:qxt_sos} is a sum of squares of the quadratic forms $q_1(x(\bm{t})),\dots,q_m(x(\bm{t}))$ in $\bm{t}$.
However, by Theorem~\ref{thm:Horn_not_sos_Hadamard}, the left-hand side of \eqref{eq:qxt_sos} is not a sum of squares, which is a contradiction.
Thus, we obtain the desired result.
\end{proof}

\subsection{Failure of Asymptotic Exactness over Positive Semidefinite Cones}\label{subsec:not_exact_over_psd}
In this subsection, we discuss the strictness of the inclusion shown in \eqref{eq:hierarchy_inclusion}.
When the Euclidean Jordan algebra $\bbE$ is the Hadamard Euclidean Jordan algebra $\bbR^r$, the inclusion is tight if $r\le 5$ and strict if $r\ge 6$, as discussed in Section~\ref{sec:intro}.
Although one might expect an analogous result to hold for general symmetric cones, this is not immediately the case.
Specifically, we focus on the positive semidefinite cone $\bbS_+^r$ and its associated Euclidean Jordan algebra $(\bbS^r,\circ,\bullet)$ defined by
\begin{equation}
\bm{X}\circ \bm{Y} \coloneqq \frac{\bm{X}\bm{Y} + \bm{Y}\bm{X}}{2}
\quad \text{and} \quad
\bm{X}\bullet \bm{Y} \coloneqq \tr(\bm{X}\bm{Y}), \label{eq:EJA_psd}
\end{equation}
respectively, for $\bm{X},\bm{Y}\in \bbS^r$~\cite[page~48]{FK1994}.
Note that $\bm{X}^2 = \bm{X} \circ \bm{X}$ coincides with the usual matrix square.
We show in Theorem~\ref{thm:not_exact_psd} that, for this cone, the inclusion in \eqref{eq:hierarchy_inclusion} is strict for every $r\ge 5$; in particular, strictness already occurs at $r=5$.
As will become clear from the discussion below, the proof is constructive and provides an explicit certificate for the strictness of the inclusion.

The following lemma follows directly from \cite[Theorem~4.3]{CL1977}, but we include the proof to exhibit a concrete instance satisfying its condition.

\begin{lemma}\label{lem:psd_but_nonsos}
Let $r$ be an integer at least $5$.
Then there exists a positive semidefinite quartic form $p\colon \bbR^r \to \bbR$ such that $(\sum_{i=1}^rx_i^2)^l p(\bm{x})$ is not a sum of squares for any nonnegative integer $l$.
\end{lemma}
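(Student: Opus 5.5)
The plan is the classical ``bad point'' argument of Choi and Lam, made concrete. First, for $r=5$ we pick an explicit positive semidefinite quartic $p$ on $\bbR^5$ with a zero at $\bm{e}_5$ at which no multiplier $(\sum_i x_i^2)^l$ can ``repair'' $p$. Then we handle $r>5$ by regarding the same form as a quartic on $\bbR^r$ that does not depend on $x_6,\dots,x_r$.

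\emph{Step 1 (local structure at the zero).} Expanding around $\bm{e}_5$, write
\begin{equation*}
p(\bm{x}) = x_5^2 Q(\bm{x}') + x_5 C(\bm{x}') + F(\bm{x}'),\qquad \bm{x}'=(x_1,\dots,x_4),
\end{equation*}
with $Q,C,F$ forms of degrees $2,3,4$. We choose $Q$ positive semidefinite with a nontrivial kernel $W\subseteq\bbR^4$. We choose $C$ and $F$ so that $p$ is positive semidefinite; we would certify this by exhibiting $p$ as a manifestly nonnegative expression, for instance as a sum of squares of rational terms or via an AM--GM inequality as for the Choi--Lam form $w^4+x^2y^2+y^2z^2+z^2x^2-4wxyz$.

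\emph{Step 2 (propagating an SOS identity to the bad point).} Suppose that $(\sum_i x_i^2)^l p = \sum_k h_k^2$ with forms $h_k$ of degree $l+2$. Dehomogenize at $x_5=1$ and expand each $h_k$ in $\bm{x}'$ by homogeneous degree, $h_k = h_k^{(0)}+h_k^{(1)}+h_k^{(2)}+\cdots$. Since the right-hand side vanishes to order $2$ at $\bm{x}'=0$, we get $h_k^{(0)}=0$. Comparing degree-$2$ parts gives $\sum_k (h_k^{(1)})^2 = Q$, so every $h_k^{(1)}$ vanishes on $W$. Restricting the degree-$3$ and degree-$4$ comparisons to $W$ then gives $C|_W=0$ and
\begin{equation*}
F|_W + (\text{correction from } (1+\norm{\bm{x}'}^2)^l) = \sum_k \bigl(h_k^{(2)}|_W\bigr)^2 + \text{cross terms } 2h_k^{(1)}h_k^{(3)}.
\end{equation*}
Because the factor $(1+\norm{\bm{x}'}^2)^l$ equals $1$ plus terms of order $\ge 2$, the correction is $l\,\norm{\bm{x}'}^2\,Q$, and $Q$ vanishes on $W$, so the correction disappears on $W$. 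The cross terms also vanish on $W$. Hence $F|_W$ must be a sum of squares of quadratic forms on $W$, \emph{independently of $l$}.

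\emph{Step 3 (choice of data).} It therefore suffices to arrange that $F|_W$ is not a sum of squares on $W$, while $p$ itself is still positive semidefinite. Ternary quartics that are positive semidefinite are sums of squares (Hilbert), so we need $\dim W = 4$, which would force $Q=0$. We therefore refine Step 2 with a weighted, anisotropic scaling: put $x_i=\epsilon y_i$ on a complement of $W$ and $x_i=\epsilon^{1/2}y_i$ on $W$, taking the kernel $W$ as large as the fifth variable allows. The obstruction is then read off from the lowest weighted-degree part, which must be of the form ``positive semidefinite but not SOS'' on a space of dimension at least $4$, e.g.\ a copy of the Choi--Lam quartic. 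The weighted expansion is chosen so that the multiplier $(\sum x_i^2)^l$ still contributes only the constant $1$ at lowest weighted degree.

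The main obstacle is Step 3: finding an explicit $p$ for which positive semidefiniteness of $p$ is simultaneously compatible with a non-SOS lowest-order piece at the bad point. This is exactly the content of \cite[Theorem~4.3]{CL1977}. I would first try $p = x_5^2 q(\bm{x}') + \text{(Choi--Lam-type quartic in } \bm{x}')$ with $q$ a suitable nonnegative quadratic, and check positive semidefiniteness by completing squares in $x_5$.
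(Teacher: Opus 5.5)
Your Step~2 is correct, but Step~3 draws the wrong conclusion from it. As a result the proposal never fixes $p$ or finishes the argument; it ends by deferring to \cite[Theorem~4.3]{CL1977}. You rightly observe that an obstruction requires $\dim W=4$, i.e.\ $Q=0$, but you then treat $Q=0$ as forbidden. Nothing in Steps~1--2 uses $Q\neq 0$, and $Q=0$ has the nontrivial kernel $W=\bbR^4$ that Step~1 asks for.

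With $Q=0$, the form $p=x_5C(\bm{x}')+F(\bm{x}')$ is linear in $x_5$, so positive semidefiniteness forces $C=0$ and $p=F(x_1,\dots,x_4)$. Take $F=x_1^2x_2^2+x_1^2x_3^2+x_2^2x_3^2+x_4^4-4x_1x_2x_3x_4$, which is nonnegative by AM--GM and not a sum of squares by Choi--Lam. Your own comparison at $x_5=1$ then gives:
\begin{itemize}
\item $h_k^{(0)}=0$;
\item $\sum_k(h_k^{(1)})^2=Q=0$, hence $h_k^{(1)}=0$;
\item $\sum_k(h_k^{(2)})^2=F$, a contradiction for every $l$.
\end{itemize}
This is exactly the paper's proof. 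It takes $p(\bm{x})=p_0(x_1,\dots,x_4)$ with $p_0$ the Choi--Lam form, sets $x_5=\cdots=x_{r-1}=0$ and $x_r=1$, and uses that the lowest-degree homogeneous component of a sum of squares is a sum of squares \cite[Lemma~4]{LV2023}. Your Step~2 reproves that fact. Your passage from $r=5$ to $r>5$ by setting $x_6=\cdots=x_r=0$ is fine.

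The anisotropic detour you substitute cannot work as described. Suppose $\dim W\le 3$ and write $\bm{x}'=u+w$ with $u\in W^\perp$ and $w\in W$. Using $C|_W=0$, the lowest weighted part is $g(u,w)=Q(u)+u^\top\bm{c}(w)+F(w)$. Here, in coordinates on $W^\perp$, $Q(u)=u^\top\bm{M}u$ with $\bm{M}$ positive definite, and $\bm{c}(w)$ is a vector of quadratic forms. The weighted comparison only requires $g=\sum_k(\ell_k(u)+q_k(w))^2$. Completing the square gives $g=(u+\tfrac12\bm{M}^{-1}\bm{c}(w))^\top\bm{M}(u+\tfrac12\bm{M}^{-1}\bm{c}(w))+G(w)$, where $G=F|_W-\tfrac14\bm{c}^\top\bm{M}^{-1}\bm{c}$. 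This $G$ is a nonnegative quartic in at most $3$ variables, hence a sum of squares by Hilbert. So $g$ always has the required form, and no obstruction appears at lowest weighted order unless again $W=\bbR^4$.

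For the same reason, your fallback $p=x_5^2q(\bm{x}')+(\text{Choi--Lam quartic})$ yields nothing via this argument unless $q=0$. For $q\neq 0$, the Step~2 condition lives on $\ker q$, of dimension at most $3$, where it holds automatically. The missing observation is simply that $Q=0$ is allowed, and that choice completes the proof.
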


\begin{proof}
Let $p_0(x_1,x_2,x_3,x_4) \coloneqq x_1^2x_2^2 + x_1^2x_3^2 + x_2^2x_3^2 + x_4^4 -4x_1x_2x_3x_4$.
It is known that $p_0$ is positive semidefinite but not a sum of squares~\cite[Theorem~2.3]{CL1977}.
We then define $p(\bm{x}) \coloneqq p_0(x_1,x_2,x_3,x_4)$ for $\bm{x}\in \bbR^r$.
We show hereafter that the polynomial $p$ satisfies the desired condition.
To see this, we assume that $(\sum_{i=1}^rx_i^2)^l p(\bm{x})$ is a sum of squares for some nonnegative integer $l$.
Setting $x_5 = \cdots = x_{r-1} = 0$ and $x_r = 1$, we see that the polynomial $(\sum_{i=1}^4x_i^2 + 1)^l p_0(x_1,x_2,x_3,x_4)$ is also a sum of squares.
The lowest-degree homogeneous component of this polynomial is $p_0$, which must also be a sum of squares~\cite[Lemma~4]{LV2023}, contradicting the choice of $p_0$.
\end{proof}

\begin{theorem}\label{thm:not_exact_psd}
Let $r$ be an integer at least $5$.
Then there exists $A\in \COP(\bbS_+^r)$ such that $(\bm{X}\bullet \bm{X})^lq_A(\bm{X}^2)$ is not a sum of squares for any nonnegative integer $l$.
In other words, $\bigcup_{l\ge 0} \calI_l(\bbS_+^r)$ is strictly included in $\COP(\bbS_+^r)$.
\end{theorem}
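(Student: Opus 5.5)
The plan is to embed the quartic $p_0$ from the proof of Lemma~\ref{lem:psd_but_nonsos} into $\COP(\bbS_+^r)$. We find a quadratic form on $\bbS^r$ that is nonnegative on $\bbS_+^r$ and whose composition with $\bm{X}\mapsto\bm{X}^2$, restricted to a suitable $r$-dimensional linear family of matrices, reproduces $p_0$. Non-SOS-ness is then transferred back along this restriction, since a sum of squares stays a sum of squares after composition with a linear map.

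\emph{Step 1 (the copositive element).} Define $A\in\calS(\bbS^r)$ through its quadratic form. For $\bm{Y}\in\bbS^r$, using indices $2,\dots,5$ (this is where $r\ge 5$ is used), set
\begin{equation*}
q_A(\bm{Y}) \coloneqq Y_{22}Y_{33}+Y_{22}Y_{44}+Y_{33}Y_{44}+Y_{55}^2-4Y_{23}Y_{45}.
\end{equation*}
Every quadratic form on $\bbS^r$ is $q_A$ for a unique self-adjoint $A$, so this defines $A$. For copositivity, let $\bm{Y}\in\bbS_+^r$ and write $a,b,c,d$ for $Y_{22},Y_{33},Y_{44},Y_{55}$, all nonnegative. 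The $2\times2$ principal minors give $|Y_{23}|\le\sqrt{ab}$ and $|Y_{45}|\le\sqrt{cd}$. The AM--GM inequality applied to the four terms $ab,ac,bc,d^2$ gives
\begin{equation*}
ab+ac+bc+d^2\ge 4(ab\cdot ac\cdot bc\cdot d^2)^{1/4}=4\sqrt{abcd}\ge 4|Y_{23}Y_{45}|,
\end{equation*}
so $q_A(\bm{Y})\ge0$ and hence $A\in\COP(\bbS_+^r)$.

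\emph{Step 2 (the linear slice).} For $s\in\bbR$ and $\bm{y}=(y_2,\dots,y_r)^\top\in\bbR^{r-1}$, let $\bm{X}(s,\bm{y})$ be the symmetric matrix with $(1,1)$ entry $s$, first row and column off the diagonal equal to $\bm{y}$, and all other entries $0$. A direct computation shows that the lower-right $(r-1)\times(r-1)$ block of $\bm{X}^2$ is $\bm{y}\bm{y}^\top$. Hence
\begin{equation*}
q_A(\bm{X}(s,\bm{y})^2)=p_0(y_2,y_3,y_4,y_5)
\end{equation*}
with $p_0$ as in the proof of Lemma~\ref{lem:psd_but_nonsos}. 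Moreover $\bm{X}\bullet\bm{X}=s^2+2\norm{\bm{y}}^2$.

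\emph{Step 3 (transfer).} Suppose that $(\bm{X}\bullet\bm{X})^l q_A(\bm{X}^2)$ is a sum of squares for some $l$. Substituting the linear map $(s,\bm{y})\mapsto\bm{X}(s,\bm{y})$ preserves sums of squares, so $(s^2+2\norm{\bm{y}}^2)^l p_0(\bm{y})$ is a sum of squares on $\bbR^r$. Replace $s$ by $\sqrt2\,s$ and factor out $2^l$. This shows that $(s^2+\sum_i y_i^2)^l p(s,\bm{y})$ is a sum of squares, where $p$ is exactly the $r$-variable form in Lemma~\ref{lem:psd_but_nonsos} up to relabeling coordinates. That contradicts the lemma. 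Combined with Step~1, this shows $A\in\COP(\bbS_+^r)\setminus\bigcup_{l\ge0}\calI_l(\bbS_+^r)$.

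The main obstacle is Step~1: finding a quadratic form on $\bbS^r$ that restricts to $p_0$ on $\bm{y}\bm{y}^\top$ and is also nonnegative on all positive semidefinite matrices, not just rank-one ones. The chosen lift, which pairs $Y_{23}$ with $Y_{45}$ and bounds them by AM--GM, is what makes this work. A secondary point is including the free $(1,1)$ entry $s$ in Step~2, so that the slice has dimension $r$ and Lemma~\ref{lem:psd_but_nonsos} applies directly even when $r=5$.
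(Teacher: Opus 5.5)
Your proof is correct. It follows the paper's overall strategy: realize the Choi--Lam form $p_0$ inside $q_A(\bm{X}^2)$ for some $A\in\COP(\bbS_+^r)$, then pull a hypothetical sum-of-squares certificate back to contradict Lemma~\ref{lem:psd_but_nonsos}. Both steps, however, are carried out differently.

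\emph{Copositivity.} The paper takes an arbitrary lift $q_1$ with $q_1(\bm{x}\bm{x}^\top)=p(\bm{x})$ and adds $\alpha(\tr(\bm{X})^2-\tr(\bm{X}^2))$. This added form is nonnegative on $\bbS_+^r$ and vanishes on rank-one matrices, and $\alpha$ is defined by a minimum over pairs of unit vectors so that it dominates the cross terms in an eigendecomposition. You instead write down one specific sparse lift and certify it directly from the $2\times 2$ principal minors and AM--GM. This is valid, since $ab\cdot ac\cdot bc\cdot d^2=(abcd)^2$.

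\emph{Transfer.} The paper substitutes the rank-one matrix $\bm{X}=\bm{x}\bm{x}^\top$, a quadratic substitution that still preserves sums of squares, and obtains $(\sum_{i=1}^r x_i^2)^{2l+2}p(\bm{x})$ directly. You use the linear arrow-shaped slice $\bm{X}(s,\bm{y})$, whose square has lower-right block $\bm{y}\bm{y}^\top$. The free entry $s$ and the rescaling $s\mapsto\sqrt{2}\,s$ are exactly what is needed to land on an $r$-variable instance of the lemma when $r=5$.

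\emph{Trade-off.} Your route gives a completely explicit $A$ with an elementary copositivity certificate and no auxiliary constant. The cost is that the lift is tailored to the monomial structure of $p_0$, relying on pairing $Y_{23}$ with $Y_{45}$. The paper's $\alpha q_2$ device is generic: it shows that every positive semidefinite quartic form on $\bbR^r$ is the rank-one restriction of some element of $\COP(\bbS_+^r)$. That is the structural reason for the contrast with $\bbR_+^5$ discussed after the theorem.
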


\begin{proof}
Since $r\ge 5$, we can choose a positive semidefinite quartic form $p\colon \bbR^r \to \bbR$ satisfying the condition stated in Lemma~\ref{lem:psd_but_nonsos}.
Let $q_1\colon \bbS^r \to \bbR$ be a quadratic form satisfying $q_1(\bm{x}\bm{x}^\top) = p(\bm{x})$ for all $\bm{x}\in \bbR^r$.
(Such a $q_1$ can be constructed in the following way.
Since $p$ is a quartic form, there exist a subset $I$ of $\{(h,i,j,k) \mid 1\le h\le i\le j\le k\le r\}$ and real numbers $p_{hijk}$ for $(h,i,j,k)\in I$ such that $p(\bm{x}) = \sum_{(h,i,j,k)\in I} p_{hijk}x_hx_ix_jx_k$.
Then $q_1(\bm{X}) \coloneqq \sum_{(h,i,j,k)\in I} p_{hijk}X_{hi}X_{jk}$ is a desired quadratic form.)
We also define the quadratic form $q_2$ by $q_2(\bm{X}) \coloneqq \tr(\bm{X})^2 - \tr(\bm{X}^2)$.
Let $A_1$ be an element in $\calS(\bbS^r)$ satisfying $q_1 = q_{A_1}$ and define $\alpha$ by
\begin{equation*}
\alpha \coloneqq -\min_{\norm{\bm{x}}_2 = \norm{\bm{y}}_2 = 1} \bm{x}\bm{x}^\top \bullet A_1(\bm{y}\bm{y}^\top),
\end{equation*}
where $\norm{\cdot}_2$ denotes the $2$-norm of an input vector.
Furthermore, we define the quadratic form $q$ by $q(\bm{X}) \coloneqq q_1(\bm{X}) + \alpha q_2(\bm{X})$, and let $A\in \calS(\bbS^r)$ satisfy $q = q_A$.
In what follows, we show that $A$ is a desired element.

First, we show that $A\in \COP(\bbS_+^r)$.
Let $\bm{X}\in \bbS_+^r$ be arbitrary and let $\sum_{i=1}^r \lambda_i \bm{u}_i\bm{u}_i^\top$ be an eigenvalue decomposition of $\bm{X}$, where $\lambda_1,\dots,\lambda_r$ are the eigenvalues of $\bm{X}$ and $\bm{u}_1,\dots,\bm{u}_r$ are orthonormal.
Note that the eigenvalues $\lambda_1,\dots,\lambda_r$ are nonnegative since $\bm{X}$ is positive semidefinite.
Then it follows that
\begin{align*}
q_A(\bm{X}) &= q_1(\bm{X}) + \alpha q_2(\bm{X}) \\
&= \sum_{i=1}^r\lambda_i^2 q_1(\bm{u}_i\bm{u}_i^\top) + 2\sum_{1\le i<j \le r}\lambda_i\lambda_j\bm{u}_i\bm{u}_i^\top \bullet A_1(\bm{u}_j\bm{u}_j^\top) + 2\alpha\sum_{1\le i<j\le r}\lambda_i\lambda_j\\
&\overset{\scriptsize \text{(a)}}\ge \sum_{i=1}^r\lambda_i^2 q_1(\bm{u}_i\bm{u}_i^\top)\\
&\overset{\scriptsize \text{(b)}}= \sum_{i=1}^r\lambda_i^2p(\bm{u}_i)\\
&\overset{\scriptsize \text{(c)}}\ge 0,
\end{align*}
where (a) follows from the definition of $\alpha$, (b) follows from the choice of $q_1$, and (c) follows from the positive semidefiniteness of $p$.
Since $\bm{X} \in \bbS_+^r$ is arbitrary, we obtain $A\in \COP(\bbS_+^r)$.

Second, we show that $A \not\in \bigcup_{l\ge 0} \calI_l(\bbS_+^r)$, i.e., $(\bm{X}\bullet \bm{X})^l q_A(\bm{X}^2)$ is not a sum of squares for any nonnegative integer $l$.
To derive a contradiction, we assume that $(\bm{X}\bullet \bm{X})^l q_A(\bm{X}^2)$ is a sum of squares for some nonnegative integer $l$.
Substituting $\bm{x}\bm{x}^\top$ for $\bm{X}$ in this polynomial, we have
\begin{align}
(\bm{x}\bm{x}^\top\bullet \bm{x}\bm{x}^\top)^l q_A((\bm{x}\bm{x}^\top)^2) &= \left(\sum_{i=1}^r x_i^2\right)^{2l+2}q_A(\bm{x}\bm{x}^\top) \nonumber \\
&= \left(\sum_{i=1}^r x_i^2\right)^{2l+2}q_1(\bm{x}\bm{x}^\top) \nonumber \\
&= \left(\sum_{i=1}^r x_i^2\right)^{2l+2}p(\bm{x}), \label{eq:XdotXl_qX2}
\end{align}
where the second equality follows from the fact that $q_2$ vanishes on matrices of rank at most $1$, i.e., $q_2(\bm{x}\bm{x}^\top) = 0$, and the third equality follows from the choice of $q_1$.
By the assumption for contradiction, the polynomial in \eqref{eq:XdotXl_qX2} is also a sum of squares.
However, this contradicts the condition for the polynomial $p$ stated in Lemma~\ref{lem:psd_but_nonsos}.
Thus, we obtain $A \not\in \bigcup_{l\ge 0} \calI_l(\bbS_+^r)$.
\end{proof}

The contrast between the Hadamard Euclidean Jordan algebra $\bbR^5$ and the algebra $\bbS^5$ in \eqref{eq:EJA_psd} comes from the different polynomial structures induced by the squaring mapping.
In the case of the Hadamard Euclidean Jordan algebra, we have $\bm{x}^2=\bm{x}\odot\bm{x}$, so for a matrix $\bm{A}\in\bbS^5$, the polynomial $(\bm{x}^2)^\top\bm{A}\bm{x}^2$ is a quartic form in which each variable appears only through its square.
In the case of the matrix algebra $\bbS^5$, however, restricting a quadratic form on $\bbS^5$ to rank-one matrices $\bm{X}=\bm{x}\bm{x}^\top$ gives arbitrary quartic forms, since $X_{ij}=x_i x_j$ for each $1\le i\le j\le 5$.
Hence the hierarchy over $\bbS_+^5$ contains, as a restriction, the sum-of-squares representability problem for general positive semidefinite quartic forms on $\bbR^5$ after multiplication by a power of $\sum_{i=1}^5 x_i^2$.
This additional freedom allows one to construct elements in $\COP(\bbS_+^5)$ outside $\bigcup_{l\ge0}\calI_l(\bbS_+^5)$.

An argument similar to Theorem~\ref{thm:not_exact_psd} does not apply to the case $r\le 4$.
At present, Lemma~\ref{lem:psd_but_nonsos} is guaranteed to hold only for $r\ge 5$, and it remains unknown whether an analogous result holds for $r=4$.
For $r \le 3$, a result analogous to this lemma does not hold, since every positive semidefinite quartic form in at most $3$ variables is a sum of squares~\cite{Hilbert1888}.

\section{Final Remarks}\label{sec:remark}
In this paper, we showed that for any symmetric cone of rank $r$ at least $5$, neither $\COP(\bbK)$ nor $\CP(\bbK)$ is a spectrahedral shadow.
We proved this result by taking a slice of $\COP(\bbK)$ that is linearly isomorphic to $\COP^r$.

When $r$ is less than or equal to $4$, we cannot use this technique to solve the problem of whether $\COP(\bbK)$ and $\CP(\bbK)$ are spectrahedral shadows, since $\COP^r$ and $\CP^r$ are spectrahedral shadows as mentioned in Section~\ref{sec:intro}.
We note that $\COP(\bbK)$ and $\CP(\bbK)$ are spectrahedral shadows in the case where $r = 1$ and $r = 2$.
When $r = 1$, both $\COP(\bbK)$ and $\CP(\bbK)$ are $1$-dimensional polyhedral cones, which are spectrahedral shadows.
When $r = 2$, according to \cite[Appendix~A]{NL2026}, $\COP(\bbK)$ and $\CP(\bbK)$ are linearly isomorphic to copositive and completely positive cones over a second-order cone, respectively, and hence are spectrahedral shadows, as mentioned in Section~\ref{sec:intro}.
The only remaining cases are $r = 3$ and $r = 4$.
The following question remains open:
\begin{question}
For a symmetric cone of rank $3$ or $4$, are the copositive and completely positive cones over it spectrahedral shadows?
\end{question}

\vspace{0.5cm}
\noindent
{\bf Acknowledgments}
The author would like to thank Professor Bruno F. Louren\c{c}o for his valuable comments and discussions.
The author also thanks the anonymous reviewers for their constructive comments, which improved the quality of this paper.
The author is supported by JSPS Grant-in-Aid for Research Activity Start-up JP25K23344.

\begin{appendices}
\section*{Appendix: On the Case Distinction in the Proof of Lemma~\ref{lem:Horn_exposed}}
In this appendix, we justify the case distinction from Case~\ref{enum:case_a_rkg} to Case~\ref{enum:case_f_rkg} in the proof of Lemma~\ref{lem:Horn_exposed}.
First, we observe that unless the quadruple $(i,j,k,l)$ with $(ij,kl) \in \llbracket r_+\rrbracket$ satisfies $(ij,kl) \in \llbracket r\rrbracket$, which corresponds to \eqref{eq:A_eq_aH_on_Ec1}, it follows that $l = r_+$.
In what follows, we therefore focus on the case $l = r_+$.

First, we consider the case $k = r_+$.
If $j \le r$, then the quadruple $(i,j,k,l)$ corresponds to \eqref{eq:Aijr+r+_eq_0}.
If $j = r_+$, we distinguish two subcases: $1 \le i \le r$ and $i = r_+$.
The case $1 \le i \le r$ corresponds to Case~\ref{enum:case_a_rkg}, whereas the case $i = r_+$ corresponds to \eqref{eq:Ar+r+r+r+_eq_0}.

Next, we consider the case $k \le r$.
When $j = r_+$, we distinguish two subcases: $i = k$ and $i < k$.
The case $i = k$ corresponds to \eqref{eq:Air+ir+_eq_0}, whereas the case $i < k$ corresponds to Case~\ref{enum:case_b_rkg}.
When $j \le r$, we distinguish subcases according to how many of $i,j,k,l$ are identical.
Since $l = r_+$, it is impossible that all of $i,j,k,l$ are identical.
If exactly three of $i,j,k,l$ are identical, they must be $i,j,k$, which corresponds to Case~\ref{enum:case_c_rkg}.
If exactly two of $i,j,k,l$ are identical, there are two subcases.
The case $i = j$ corresponds to Case~\ref{enum:case_d_rkg}, and the case $i < j$ corresponds to Case~\ref{enum:case_e_rkg}.
Finally, the case where $i,j,k,l$ are all distinct corresponds to Case~\ref{enum:case_f_rkg}.
\end{appendices}

\bibliographystyle{plainurl} 
\bibliography{2025_1_ref} %
\end{document}